\def\sqr#1#2{{\vcenter{\vbox{\hrule height.#2pt
  \hbox{\vrule width.#2pt height#1pt \kern#1pt \vrule width.#2pt}
  \hrule height.#2pt}}}}
\crefname{section}{section}{sections}
\crefname{subsection}{subsection}{subsections}
\Crefname{section}{Section}{Sections}
\Crefname{subsection}{Subsection}{Subsections}
\crefname{condition}{Condition}{Conditions}
\Crefname{figure}{Figure}{Figures}
\def\ds{\displaystyle}
\def\ns{\noalign{\smallskip}}
\newtheorem {theorem}{Theorem}[section]
\newtheorem {lemma}[theorem]{{\bf Lemma}}
\newtheorem {proposition}[theorem]{{\bf Proposition}}
\theoremstyle{remark}
\newtheorem {remark}{{\bf Remark}}[section]
\newtheorem {condition}{{\bf Condition}}[section]
\theoremstyle{definition}
\newtheorem {example}{{\bf Example}}[section]
\theoremstyle{plain} \numberwithin {equation}{section}
\newtheorem{problem}{Problem}
\numberwithin{assumption}{section}
\def\deq{\mathop{\buildrel\Delta\over=}}
\begin{document}

\title{\bf An Inverse Source Problem for Semilinear Stochastic Hyperbolic Equations\footnote{
This work is supported by the NSF of China under grants 12401589 and 12025105, and by the Fundamental Research Funds for the Central Universities 2682024CX013.
}}

\author{
Qi L\"{u}
\footnote{School of Mathematics, Sichuan University, Chengdu, P. R. China. Email: lu@scu.edu.cn. }
~~~ and ~~~
Yu Wang
\footnote{School of Mathematics, Southwest Jiaotong University, Chengdu, P. R. China.
Email: yuwangmath@163.com.}
}

\date{}

\maketitle

\begin{abstract}

This paper investigates an inverse source problem for general semilinear stochastic hyperbolic equations. 
Motivated by the challenges arising from both randomness and nonlinearity, we develop a globally convergent iterative regularization method that combines Carleman estimate with fixed-point iteration. 
Our approach enables the reconstruction of the unknown source function from partial lateral Cauchy data, without requiring a good initial guess. 
We establish a new Carleman estimate for stochastic hyperbolic equations and prove the convergence of the proposed method in weighted spaces. 
Furthermore, we design an efficient numerical algorithm that avoids solving backward stochastic partial differential equations and is robust to randomness in both the model and the data. 
Numerical experiments are provided to demonstrate the effectiveness of the method.

\end{abstract}

\noindent{\bf Keywords}.  Semilinear stochastic hyperbolic equations, inverse source problem, numerical methods, regularization, Carleman estimate.

\section{Introduction}

Let $T>0$, and let $(\Omega,\mathcal{F},\mathbf{F},\mathbb{P})$ be a complete filtered probability space, where $\mathbf{F}=\{\mathcal{F}_t\}_{t\ge0}$ is the natural filtration generated by a one-dimensional standard Brownian motion $\{W(t)\}_{t\ge0}$, augmented by all $\mathbb{P}$-null sets in $\mathcal{F}$. Write $\mathbb{F}$ for the progressive $\sigma$-field with respect to $\mathbf{F}$. 
Let $\mathcal{H}$ be a Banach space. 
Denote by $L^2_{\mathbb{F}}(0,T;\mathcal{H})$ the Banach space consisting of all $\mathcal{H}$-valued $\mathbf{F}$-adapted processes $X(\cdot)$ satisfying $\mathbb{E}\bigl(|X(\cdot)|^2_{L^2(0,T;\mathcal{H})}\bigr)<\infty$, equipped with the canonical norm; by $L^\infty_{\mathbb{F}}(0,T;\mathcal{H})$ the Banach space of all $\mathcal{H}$-valued $\mathbf{F}$-adapted essentially bounded processes; 
and by $L^2_{\mathbb{F}}(\Omega;C([0,T];\mathcal{H}))$ the Banach space of all $\mathcal{H}$-valued $\mathbf{F}$-adapted continuous processes $X(\cdot)$ for which $\mathbb{E}\bigl(|X(\cdot)|^2_{C([0,T];\mathcal{H})}\bigr)<\infty$ (similarly, one can define $ L^2_{\mathbb{F}}(\Omega;H^{1}(0,T;\mathcal{H})) $).

Let $G$ be a bounded domain in $\mathbb{R}^n$ with a $C^2$ boundary $\Gamma = \partial G$. Let $(b^{j k})_{1\leq j, k \leq n} \in C^{1}(G; \mathbb{R}^{n})$ satisfy that $ b^{j k} = b^{k j}  $ for all $ j, k = 1, \cdots, n $ and for some constant $ s_{0} > 0 $, 
\begin{align*}
\sum_{j, k = 1}^{n} b^{j k} \xi_{j} \xi_{k} \geq s_{0} |\xi|^{2},   \text{ for all }   (x, \xi) \in G \times \mathbb{R}^{n}.
\end{align*}
Denote by $\nu(x) = (\nu^1(x), \nu^2(x), \cdots, \nu^n(x))$ the unit outward normal vector of $\Gamma$ at $x\in \Gamma$. Put
\begin{align}\label{eqGamma0}
\Gamma_0 \deq \Big\{ x\in \Gamma \;\Big|\;
\sum_{j,k=1}^nb^{jk}(x)\psi_{x_j}(x)\nu^{k}(x)
> 0 \Big\}.
\end{align}
where $\psi$ is given in Condition \ref{conB}. We  use the notation $ y_{x_i}= y_{x_i}(x)\equiv\frac{\partial y(x)}{\partial x_i} $,
where $x=(x_1,\ldots,x_n)\in\mathbb{R}^n$ and $x_i$ is the $i$-th coordinate.

Let $Q = (0,T) \times G$, $\Sigma = (0,T) \times \Gamma$ and $ \Sigma_{0} = (0,T) \times \Gamma_{0} $.
Consider the following semilinear stochastic hyperbolic equation:
\begin{align}\label{eqSemlinearStochasticHyperbolic}
\left\{
\begin{aligned}
& du_{t} - \sum_{j,k=1}^n(b^{jk}u_{x_j})_{x_k}dt=F(u, u_{t}, \nabla u) dt +  a u dW(t)&\mbox{ in }Q,\\
&   u_{t}(0) = 0 &\mbox{ in }G, \\
&   u(0) = u_{0} &\mbox{ in }G
,
\end{aligned}
\right.
\end{align}
where  $ a \in L^{\infty}_{{\mathbb{F}}}(0,T;L^{\infty}(G)) $ and $F: [0,T]\times\Omega\times G\times \mathbb{R}  \times\mathbb{R} \times\mathbb{R}^n\to\ \mathbb{R} $ satisfying the following assumptions:
\begin{condition}
\label{conF}
\begin{enumerate}[(1)]
\item For each $ (\eta, \rho, \zeta) \in \mathbb{R} \times \mathbb{R} \times \mathbb{R}^{n} $,  $ F(\cdot, \cdot, \cdot, \eta, \rho, \zeta) : [0,T]\times\Omega\times G  $ is $ L^{2}(G) $-valued $ \mathbf{F}$-adapted processes;
\item There exists a constant $ L $ such that for a.e. $(t,\omega,x)\in [0,T]\times\Omega\times G$ and any $(\eta_{i},\varrho_{i},\zeta_{i})\in \mathbb{R} \times\mathbb{R} \times\mathbb{R}^n$ ($i=1,2$),
\begin{align*}
|F(t,\omega,x,\eta_1,\varrho_1,\zeta_1)-F(t,\omega,x,\eta_2,\varrho_2,\zeta_2)|
\leq 
L(|\eta_1-\eta_2|+|\varrho_1-\varrho_2|+|\zeta_1-\zeta_2|_{\mathbb{R}^n})
;
\end{align*}
\item $ |F(\cdot,\cdot,\cdot,0,0,0)|\in L^2_{\mathbb{F}}(0,T;L^2(G)) $.
\end{enumerate}
\end{condition}

\smallskip

Stochastic hyperbolic equations play a fundamental role in modeling the vibrations of strings and membranes under random force, as well as wave propagation in random media (e.g., \cite{Cabana1970,Dalang2009,Funaki1983}). Their inverse problems have significant practical applications. A notable example is the international Solar and Heliospheric Observatory (SOHO) project \cite{Hellemans1996}, which seeks to reconstruct the Sun's internal structure by analyzing surface motion measurements \cite{Dalang2009}. This problem can be mathematically formulated as a series of inverse problems for stochastic hyperbolic equations, one of which is stated below:
\begin{problem} \label{prob1}
Determine the source function $u_{0}$ from the lateral Cauchy data $u \big|_{\Sigma}$ and $\dfrac{\partial u}{\partial \nu} \bigg|_{\Sigma_{0}}$.
\end{problem}

To study Problem \ref{prob1}, we introduce a technical assumption on $ (b^{j k})_{1\leq j, k \leq n}$:
\begin{condition}
\label{conB}
There exists a positive function $\psi(\cdot) \in C^2(\overline{G})$
satisfying the following:
\begin{enumerate}[(i)]
\item For some constant $\mu_0 > 0$ and any $ (x,\xi) \in     \overline{G} \times \mathbb{R}^n $, it holds that
\begin{align*}
	\sum_{j,k=1}^n\Big\{ \sum_{j',k'=1}^n\Big[
	2b^{jk'}(b^{j'k}\psi_{x_{j'}})_{x_{k'}} -
	b^{jk}_{x_{k'}}b^{j'k'}\psi_{x_{j'}} \Big] \Big\}\xi_{j}\xi_{k} \geq
	\mu_0 \sum_{j,k=1}^nb^{jk}\xi_{j}\xi_{k} .
\end{align*}

\item  There is no critical point of $\psi(\cdot)$ in
$\overline{G}$, i.e.,
\begin{align*}
	\min_{x\in \overline{G} }|\nabla \psi(x)| > 0.
\end{align*}
\end{enumerate}
\end{condition}
\begin{remark}
Condition \ref{conB} ensures the existence of a suitable weight function for establishing the necessary Carleman estimate for  linear stochastic hyperbolic equations. This condition represents a pseudoconvexity requirement (cf. \cite[XXVIII]{Hoermander2009}). In particular, when the matrix $(b^{jk}(\cdot))_{1\leq j,k\leq n}$ coincides with the $n\times n$ identity matrix $I_n$, the function $\varphi(x) = |x - x_0|^2$ with $x_0 \in \mathbb{R}^n \setminus \overline{G}$ satisfies Condition \ref{conB} with the constant $s_0 = 4$. 
\end{remark}

Observe that if  $\psi(\cdot)$ satisfies Condition \ref{conB}, then for any constants $a \geq 1$ and $b \in \mathbb{R}$, the rescaled function $\tilde{\psi} = a\psi + b$ also satisfies this condition with $\mu_0$ replaced by $a\mu_0$. Consequently, we may select $\psi$, $\mu_0$, and positive constants  $c_0>0$, $c_1>0$ such that the following strengthened condition holds:
\begin{condition}
\label{conPsi2}
\begin{align*}
\left\{
\begin{aligned}
& \frac{1}{4}\sum_{j,k=1}^n b^{jk}(x)\psi_{x_j}(x)\psi_{x_k}(x) \geq
R^2_1 \deq\max_{x\in\overline G}\psi(x),\quad \forall x\in \overline G,\\ 
&   c_1<\frac{ R_1}{\sqrt{2}T},\quad \mu_0 - 4c_1 -c_0 > 0.
\end{aligned}
\right.
\end{align*}
\end{condition}
%


Inverse problems for deterministic hyperbolic equations have been extensively studied (see, e.g., \cite{Belishev2007,Bellassoued2017,Klibanov2013,Klibanov2021,Lassas2015,Symes2009} and references therein).  
In contrast, inverse problems for stochastic hyperbolic  equations remain far from being well studied. The reason is that, compared with inverse problems for deterministic hyperbolic equations, the presence of randomness and uncertainty in both the model and the data introduces significant additional challenges in the study of inverse problems for stochastic hyperbolic equations. Those intrinsic difficulties caused by randomness and uncertainty make it difficult to directly extend methods developed for deterministic hyperbolic equations to the stochastic setting. 
As a result, the explicit determination and reconstruction of these stochastic sources and potentials remain largely unexplored, representing a critical gap in current research. To our best knowledge, only six works concerning the explicit determination and reconstruction of these sources and potentials \cite{Feng2022,Zhang2008,Lue2013a,Lue2015a,Yuan2015,Dou2024a}. In \cite{Feng2022}, significant progress  on determining the {\it expectation} and {\it variance} of {\it random sources} for stochastic wave equations has been made.  In \cite{Zhang2008,Lue2013a,Lue2015a,Yuan2015}, it is proved that the unknown source function was determined from partial boundary measurements for semilinear stochastic hyperbolic equations. Nevertheless, those works considered the case of homogeneous Dirichlet boundary conditions and did not provide a method for reconstructing the source function. 
Recently, in \cite{Dou2024a} the inverse Cauchy problem for linear stochastic hyperbolic equations was addressed, and numerical examples were provided using a kernel-based learning method, with {\it full} boundary measurements.

Classical approaches to nonlinear inverse problems primarily rely on optimization techniques, which are local in nature and work well when a good initial guess is available. 
However, solving nonlinear problems without a reliable initial guess remains a challenging and important issue. 
One widely recognized global strategy is convexification, which employs a suitable Carleman weight function to convexify the functional, with the justification provided by Carleman estimates.
Since its introduction in \cite{Klibanov1995}, the convexification method has seen various adaptations \cite{Klibanov2016,Klibanov2015}. 
Nevertheless, a major weak point of convexification is its high computational cost \cite{Nguyen2022}. 
To address this, and inspired by \cite{Dang2024,Nguyen2022}, we propose a new method that combines fixed-point iteration with suitable Carleman estimates to achieve global convergence for nonlinear inverse problems. 
Here, ``global'' means that our approach does not require an initial guess close to the true solution.

Our approach to solving \cref{prob1} is to construct a map $ \mathfrak{G}(u_{n}) \! = \!u_{n+1}\! =\! \arg\min J_{n}(u; u_{n}) $ via a Tikhonov functional (see \cref{eqFunctional}), where the fixed point of $\mathfrak{G}$ corresponds to the solution of the desired inverse problem. 
Starting from an arbitrary initial guess $u_{0}$, we recursively define $u_{n}$ for $n\in\mathbb{N}$. 
The main result of this paper, \cref{thmConverge}, shows that this iterative sequence converges to the true solution in a suitable weighted space with an appropriate Carleman weight, and that the stability with respect to noise in the data is of H\"{o}lder type.

Finally, we present a numerical \cref{algInverseProblem} for solving the inverse \cref{prob1}.
Compared with deterministic inverse problems, the stochastic counterparts introduce  additional challenges: the lateral Cauchy data (see \cref{rkForwardBoundary}) and the gradients involved in optimization (see \cref{rkDiscreteFunctional}) are themselves stochastic processes, making classical deterministic optimization methods such as the conjugate gradient method unstable or inefficient (see \cref{fig:compareSGD_Adam}).
To address this, we employ the adaptive moment estimation (Adam) optimizer \cite{Kingma2014} for stochastic optimization and utilize automatic differentiation \cite{Paszke2017} to efficiently compute gradients, thereby avoiding the need to solve backward stochastic partial differential equations (see, e.g., \cite{Dou2022,Wang2025a}).
This approach ensures robust and efficient convergence even in the presence of randomness and high-dimensional parameter spaces; see \Cref{secNumerical} for further details.

This work is not a straightforward extension of previous studies on inverse problems for semilinear deterministic hyperbolic equations \cite{Nguyen2022,Dang2024} or linear stochastic hyperbolic equations \cite{Dou2024a}. Below, we discuss the distinctions and connections with these works in detail.

\smallskip

{\bf Comparison with \cite{Dang2024,Nguyen2022}}. 
In this work, we study a semilinear stochastic hyperbolic equation with general coefficients, using only partial boundary observation data. Unlike deterministic inverse problems, the stochastic nature of our setting introduces significant challenges, particularly due to the fact that Brownian sample paths are almost surely non-differentiable in time. To overcome these difficulties, we develop:
\begin{itemize}
\item A new Carleman estimate tailored to stochastic hyperbolic equations (see \cref{thmCarlemanEstimate});
\item A modified Tikhonov functional (see \cref{eqFunctional}) designed to handle the irregularity of stochastic processes.
\end{itemize}
Furthermore, stochastic setting poses additional structural challenges, as the  solution spaces typically lack compact embedding properties. Consequently, the techniques employed in \cite{Dang2024,Nguyen2022} for deterministic problems are not directly applicable here.

\smallskip

{\bf Comparison with \cite{Dou2024a}}.  
While \cite{Dou2024a} investigates the inverse Cauchy problem for linear stochastic hyperbolic equations, our work focuses on the semilinear case. The introduction of nonlinearity necessitates fundamentally different approaches at both theoretical and computational levels:
\begin{itemize}
\item Unlike the direct regularization method employed in \cite{Dou2024a}, we develop an iterative regularization scheme based on fixed-point iteration for \cref{eqSemlinearStochasticHyperbolic}.
\item Our Carleman estimate (see \cref{rkCarlemanEstimate}) differs  from that in \cite{Dou2024a}, as it must account for nonlinear effects in proving iterative convergence.
\item While \cite{Dou2024a} utilizes mesh-free kernel-based learning, we propose \cref{algInverseProblem} based on discretization of the Tikhonov functional \cref{eqDiscreteFunctional}.
\item Whereas \cite{Dou2024a} needs measurements of both  $u$ and  $\dfrac{\partial u}{\partial \nu}$ on the entire lateral boundary, our approach solves \cref{prob1} without requiring the boundary data for $ \dfrac{\partial u}{\partial \nu} $ on $(0,T) \times (\Gamma \setminus \Gamma_{0})$.
\end{itemize}

In a word, our new method offers several advantages:
\begin{enumerate}
\item It is specifically designed for stochastic PDEs, effectively handling randomness and circumventing the challenges of solving backward stochastic PDEs.
\item It does not require a good initial guess.
\item It exhibits a fast convergence rate.
\item It enables the reconstruction of the source function from partial boundary measurement.
\item It  does not require the special structure on the nonlinearity $F$.
\end{enumerate}

We end this section by highlighting an important point.

\begin{remark} 
Although we assume in \cref{conF} (2) that $ F(t, \omega, x, \cdot, \cdot, \cdot) $
is Lipschitz continuous, this may not hold in practice. 
In such cases, one needs to know a constant $M$ in advance such that the true solution $u^{*}$ of equation \cref{eqSemlinearStochasticHyperbolic} satisfies 
$ |u^{*}|_{L^{\infty}_{\mathbb{F}}(\Omega, W^{1,\infty}(0,T;W^{1,\infty}(G)))} \leq M $.
This does not weaken our results, since $M$ can be arbitrarily large. 
Then we construct a $C^{1}$ function $ \chi : \mathbb{R} \times \mathbb{R} \times \mathbb{R}^n \rightarrow \mathbb{R} $
satisfying $\chi=1$ in $ \{(\eta,\varrho,\zeta)\in \mathbb{R} \times\mathbb{R} \times\mathbb{R}^n \mid |\eta|+|\varrho|+|\zeta|_{\mathbb{R}^n}\leq M\} $
and $\chi=0$ in  $ \{(\eta,\varrho,\zeta)\in \mathbb{R} \times\mathbb{R} \times\mathbb{R}^n \mid |\eta|+|\varrho|+|\zeta|_{\mathbb{R}^n}>2M\} $.
We then solve \cref{prob1} for equation \cref{eqSemlinearStochasticHyperbolic} with $F$ replaced by $\chi F$. 
Numerical  \cref{ex2} demonstrates that this approach is effective. 
\end{remark}

The rest of this paper is organized as follows. In \Cref{secCarleman}, we establish a Carleman estimate for a linear stochastic hyperbolic equation.  
In  \Cref{secRegularization}, we propose an iterative regularization scheme to solve \cref{prob1} and establish its convergence properties. 
Finally, in \Cref{secNumerical} we present the numerical algorithm  for \cref{prob1} along with numerical experiments.

\section{Carleman estimate for a linear stochastic hyperbolic equation}
\label{secCarleman}

In this section, we establish a global Carleman estimate for the following linear stochastic hyperbolic equation:
\begin{equation}\label{eqLinearStochasticHyperbolic}
\begin{cases}
\ds d z_{t} - \sum_{j,k=1}^n(b^{jk}z_{x_j})_{x_k}dt=\Upsilon  dt +  a z dW(t)&\mbox{ in }Q,\\
\ns\ds   z   = 0 &\mbox{ on } \Sigma, \\
\ns\ds    \frac{\partial z}{\partial \nu} = 0  & \mbox{ on } \Sigma_{0}.
\end{cases} 
\end{equation}
\begin{theorem}
\label{thmCarlemanEstimate}
Assume conditions \cref{conB,conPsi2,conB} hold. Then there exist positive constants $ C $ and $ \lambda_{0} $ such that for all $ \lambda \geq \lambda_{0} $, the solution $z$ to \cref{eqLinearStochasticHyperbolic} with initial condition $ z_{t}(0) = 0 $ satisfies the following Carleman estimate:
\begin{align}\label{eqCar1}
\mathbb{E} \int_{Q} \theta^{2} (\lambda z_{t}^{2} + \lambda |\nabla z|^{2} + \lambda^{3} z^{2} ) d x d t 
\leq 
C \mathbb{E} \int_{G} \theta^{2} (\lambda |\nabla z|^{2} + \lambda^{3} |z|^{2}) \big|_{t = T} d x 
+ C \mathbb{E} \int_{Q} \theta^{2} |\Upsilon |^{2} d x d t,
\end{align}
where the weight function $\theta$ is defined by
\begin{align}
\label{eqTheta}
\theta = e^{\ell}, \quad \quad \ell(t,x) = \lambda (\psi(x) - c_{0} t^{2}).
\end{align}
\end{theorem}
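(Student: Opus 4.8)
The plan is to prove \cref{eqCar1} by conjugating \cref{eqLinearStochasticHyperbolic} with the weight $\theta$, establishing a pointwise weighted differential identity, and then integrating over $Q$ and taking expectation. Set $v=\theta z$ with $\theta=e^{\ell}$ and $\ell(t,x)=\lambda(\psi(x)-c_{0}t^{2})$ as in \cref{eqTheta}, so that $z_{t}=\theta^{-1}(v_{t}-\ell_{t}v)$ and $z_{x_{j}}=\theta^{-1}(v_{x_{j}}-\ell_{x_{j}}v)$. The starting point is the algebraic decomposition of $\theta$ times the principal operator into a formally self-adjoint part $\mathcal{I}_{1}v$ — containing $v_{tt}-\sum_{j,k}(b^{jk}v_{x_{j}})_{x_{k}}$, the zeroth-order term $\ell_{t}^{2}-\sum_{j,k}b^{jk}\ell_{x_{j}}\ell_{x_{k}}$, and an auxiliary multiplier $\Psi v$ still to be chosen — and a formally anti-self-adjoint part $\mathcal{I}_{2}v=-2\ell_{t}v_{t}+2\sum_{j,k}b^{jk}\ell_{x_{j}}v_{x_{k}}$ plus a zeroth-order correction. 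The essential difference from the deterministic case is that Brownian paths are a.s.\ nowhere differentiable, so $z$ has no second time derivative and one cannot directly form $\theta^{2}|z_{tt}-\sum(b^{jk}z_{x_{j}})_{x_{k}}|^{2}$: the role of $\mathcal{I}_{1}v$ is instead played by $\theta(\Upsilon\,dt+az\,dW)$ up to lower-order terms in $v$, and the "square" must be generated by applying It\^o's formula to the relevant products of $v$, $v_{t}$, $\nabla v$. This is the stochastic multiplier method for hyperbolic SPDEs (cf.\ \cite{Dou2024a} and the references therein).

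Carrying out this computation, i.e.\ applying It\^o's formula to $(\mathcal{I}_{2}v)$ times the differential of the first-order quantities in $v$ and to $d(v_{t}^{2})$, $d(vv_{t})$, $d(\ell_{t}vv_{t})$ and so on, yields an identity of the schematic form $2(\mathcal{I}_{2}v)\bigl(\theta\Upsilon\,dt+\theta a z\,dW\bigr)=\operatorname{div}_{x}(\mathcal{V})\,dt+d\mathcal{E}+2\mathcal{Q}(v_{t},\nabla v,v)\,dt+\mathcal{R}\,dt+\mathcal{S}\,dW$, where $\mathcal{V}$ is a spatial flux, $\mathcal{E}$ collects the terms under a total time differential, $\mathcal{Q}$ is a quadratic form in $(v_{t},\nabla v,v)$, $\mathcal{R}$ gathers the It\^o corrections — which are $O(\lambda)$ multiples of $a^{2}z^{2}\theta^{2}$ coming from the quadratic variation $\langle dz_{t}\rangle=a^{2}z^{2}\,dt$ — and $\mathcal{S}\,dW$ is a genuine stochastic differential. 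The central step, and the main obstacle, is to choose the auxiliary multiplier $\Psi$ (of the order of $\ell_{tt}-\sum_{j,k}(b^{jk}\ell_{x_{j}})_{x_{k}}$ up to lower-order terms) so that $\mathcal{Q}(v_{t},\nabla v,v)\ge c(\lambda v_{t}^{2}+\lambda|\nabla v|^{2}+\lambda^{3}v^{2})$ for all $\lambda\ge\lambda_{0}$ and some $c>0$. Here the coefficient of $|\nabla v|^{2}$ is made positive precisely by the pseudoconvexity inequality of \cref{conB}(i); the non-degeneracy of the $|\nabla v|^{2}$ and $v^{2}$ terms uses $\min_{\overline G}|\nabla\psi|>0$ from \cref{conB}(ii); and the coefficient of $v_{t}^{2}$, penalized by $\ell_{tt}=-2\lambda c_{0}<0$, is kept positive by the balance conditions $\mu_{0}-4c_{1}-c_{0}>0$, $c_{1}<R_{1}/(\sqrt{2}\,T)$ and $\tfrac14\sum_{j,k}b^{jk}\psi_{x_{j}}\psi_{x_{k}}\ge R_{1}^{2}$ of \cref{conPsi2}. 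Establishing this positivity while simultaneously tracking the stochastic corrections $\mathcal{R}$ so that they stay of lower order is the part that has no deterministic analogue.

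Next I integrate the identity over $Q$ and take expectations. The stochastic differential $\mathcal{S}\,dW$ integrates to a mean-zero martingale and drops out; the It\^o corrections obey $\mathbb{E}\int_{Q}|\mathcal{R}|\le C\lambda\,\mathbb{E}\int_{Q}\theta^{2}z^{2}$ and, since $\mathcal{Q}\ge c\lambda^{3}v^{2}=c\lambda^{3}\theta^{2}z^{2}$, are absorbed into the left-hand side once $\lambda$ is large; the forcing contributes $2\mathbb{E}\int_{Q}(\mathcal{I}_{2}v)\theta\Upsilon$, handled by Young's inequality and absorption of the $(\mathcal{I}_{2}v)^{2}$-type term, leaving $C\mathbb{E}\int_{Q}\theta^{2}|\Upsilon|^{2}$ on the right (again for $\lambda$ large). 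The divergence term $\mathbb{E}\int_{Q}\operatorname{div}_{x}\mathcal{V}$ becomes a boundary integral over $\Sigma$; since $z=0$ on $\Sigma$ we have $v=0$ and the tangential gradient of $v$ vanishes there, so its integrand reduces to a multiple of $\bigl(-\sum_{j,k}b^{jk}\psi_{x_{j}}\nu^{k}\bigr)\theta^{2}|\partial z/\partial\nu|^{2}$. On $\Sigma_{0}$ this vanishes because $\partial z/\partial\nu=0$, while on $\Sigma\setminus\Sigma_{0}$ one has $-\sum_{j,k}b^{jk}\psi_{x_{j}}\nu^{k}\ge0$ by the very definition \cref{eqGamma0} of $\Gamma_{0}$, so this contribution carries a favorable sign and is discarded — this is exactly where the geometric choice of $\Gamma_{0}$ and the use of only partial Cauchy data enter.

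Finally, the total time differential integrates to $\mathbb{E}\int_{G}\mathcal{E}\big|_{t=0}^{t=T}$. At $t=0$ this vanishes: $z_{t}(0)=0$ by hypothesis and $\ell_{t}(0)=0$ by the choice of $\ell$, hence $\theta_{t}(0)=0$ and $v_{t}(0)=\theta\bigl(z_{t}(0)+\ell_{t}(0)z(0)\bigr)=0$, so every term of $\mathcal{E}|_{t=0}$ — a quadratic expression in $(v(0),v_{t}(0),\nabla v(0))$ whose coefficients are proportional to $\ell_{t}(0)$ or which carry the factor $v_{t}(0)$ — is zero. At $t=T$, $\mathcal{E}|_{t=T}$ is a $\theta^{2}$-weighted quadratic expression in $(z_{t}(T),\nabla z(T),z(T))$ with $\lambda$-power coefficients; its $z_{t}^{2}|_{t=T}$ part appears with the coefficient $-\ell_{t}(T)=2\lambda c_{0}T>0$ and is therefore thrown away, and the remaining terms are bounded above by $C\,\mathbb{E}\int_{G}\theta^{2}(\lambda|\nabla z|^{2}+\lambda^{3}z^{2})\big|_{t=T}$. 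Collecting these estimates gives \cref{eqCar1} for $v$, and the passage back to $z$ via $\theta^{2}z_{t}^{2}\le2v_{t}^{2}+C\lambda^{2}v^{2}$, $\theta^{2}|\nabla z|^{2}\le2|\nabla v|^{2}+C\lambda^{2}v^{2}$ and the reverse inequalities — with $\lambda$ large so that the $\lambda^{2}\theta^{2}z^{2}$ terms are dominated by $\lambda^{3}\theta^{2}z^{2}$ — completes the proof.
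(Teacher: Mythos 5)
Your proposal is correct and follows essentially the same route as the paper: the paper simply invokes the known pointwise weighted identity for stochastic hyperbolic operators (\cref{lemmaFundamentalIdentity}, from \cite{Lue2021a}) with $\phi\equiv 1$ and $\Psi=\ell_{tt}+\sum_{j,k}(b^{jk}\ell_{x_j})_{x_k}-c_0\lambda$ rather than rederiving it via It\^o's formula as you outline, and then proceeds exactly as you do — positivity of the quadratic form from \cref{conB,conPsi2}, absorption of the It\^o correction of size $O(\lambda)\theta^2z^2$, the favorable sign of the boundary flux on $\Sigma\setminus\Sigma_0$ coming from the definition \cref{eqGamma0} together with $\partial z/\partial\nu=0$ on $\Sigma_0$, vanishing of the $t=0$ terms via $\ell_t(0)=z_t(0)=0$, the bound of the $t=T$ terms by the stated boundary integral, and Young's inequality for the $\Upsilon$ pairing. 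The only discrepancies are cosmetic bookkeeping (your schematic $\Psi$ carries the opposite sign on the $\sum_{j,k}(b^{jk}\ell_{x_j})_{x_k}$ part, and the balance conditions of \cref{conPsi2} are actually used for the gradient and zeroth-order coefficients while the $v_t^2$ coefficient is $c_0\lambda$ directly from the choice of $\Psi$), none of which affects the validity of the approach.
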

\begin{remark}
\label{rkCarlemanEstimate}
We emphasize that \cref{thmCarlemanEstimate} employs a different Carleman estimate than \cite{Dou2024a}, where the weight function $ \theta $ is chosen with  $\theta_{t}(0) = 0$  to ensure iteration convergence (\cref{thmConverge}).
Furthermore, since our assumption on the measurement noise is given by \cref{eqNoise}, it is sufficient here to derive the Carleman estimate under homogeneous boundary conditions.
\end{remark}

Here and in what follows, we denote by $C$ a generic positive constant that depends only on the domain $G$, time horizon $T$, and coefficients $(b^{jk}(\cdot))_{1\leq j,k\leq n}$ and $a$, whose exact value may change from line to line.

To prove \cref{thmCarlemanEstimate}, we recall the following known result.

\begin{lemma}\cite[Lemma 10.18]{Lue2021a}
\label{lemmaFundamentalIdentity}
Let $\phi \in C^1((0,T)\times\mathbb{R}^n)$, $b^{jk}=b^{kj}\in
C^2(\mathbb{R}^n)$ for $j,k=1,2,\cdots,n$, 
and $\ell,\Psi \in C^2((0,T)\times\mathbb{R}^n)$. 
Let $\varphi$ be an $H^2(G)$-valued, $\mathbf{F}$-adapted process such that $\varphi_t$ is an $L^2(G)$-valued It\^o process. 
Set $\theta = e^\ell$ and $w=\theta \varphi$. 
Then, for a.e.     $x\in G$ and ${\mathbb{P}}$-a.s. $\omega \in \Omega$,
\begin{align}
\notag
\label{hyperbolic2}  
&   \theta \Big ( -2\phi\ell_t w_t + 2\sum_{j,k=1}^n b^{jk}\ell_{x_j} w_{x_k} + \Psi w \Big )
\Big[ \phi d \varphi_t - \sum_{j,k=1}^n (b^{jk}\varphi_{x_j})_{x_k} dt \Big]  
\\ \notag
& \quad+\sum_{j,k=1}^n \Big[ \sum_{j',k'=1}^n \Big (
2b^{jk}b^{j'k'}\ell_{x_{j'}}w_{x_j}w_{x_{k'}} -
b^{jk}b^{j'k'}\ell_{x_j} w_{x_{j'}}w_{x_{k'}}
\Big )   
- 2 \phi b^{jk}\ell_t w_{x_j} w_t 
+ \phi b^{jk}\ell_{x_j} w_t^2  
\\ \notag
&\quad  \quad \quad \quad 
+ \Psi b^{jk}w_{x_j} w - \Big({\cal A}\ell_{x_j} +
\frac{\Psi_{x_j}}{2}\Big)b^{jk}w^2 \Big]_{x_k} dt
\\ \notag
&\quad +d\Big\{ \phi\sum_{j,k=1}^n b^{jk}\ell_t w_{x_j} w_{x_k}-
2\phi\sum_{j,k=1}^n b^{jk}\ell_{x_j}w_{x_k}w_t  + \phi^2\ell_t w_t^2
- \phi\Psi w_t w 
+ \Big[ \phi {\cal A}\ell_t +
\frac{1}{2}(\phi\Psi)_t\Big]w^2 \Big\}   
\\ \notag
&   =   \Big[(\phi^2\ell_{t})_t + \sum_{j,k =1}^n (\phi
b^{jk}\ell_{x_j})_{x_k} - \phi\Psi \Big]w_t^2   d t
- 2\sum_{j,k=1}^n \big[(\phi
b^{jk}\ell_{x_k})_t +
b^{jk}(\phi\ell_{t})_{x_k}\big]w_{x_j}w_t   d t   
\\
& \quad 
+\sum_{j,k=1}^n c^{jk}w_{x_j}w_{x_k} d t + \mathcal{B}w^2 d t
+ \Big(   -2\phi\ell_t w_t 
+ 2\sum_{j,k=1}^n b^{jk}\ell_{x_j}w_{x_k} + \Psi w
\Big)^2 dt + \phi^2\theta^2 \ell_t(d\varphi_t)^2,  
\end{align}
where \vspace{-2mm}
\begin{align}\label{AB1}  
\left\{\begin{aligned}
& {\cal A}\deq\phi (\ell_t^2-\ell_{tt})-\sum_{j,k=1}^n
(b^{jk}\ell_{x_j}\ell_{x_k}-b^{jk}_{x_k}\ell_{x_j}
-b^{jk}\ell_{x_jx_k})-\Psi,\\  
& {\cal B}\deq{\cal A}\Psi+(\phi {\cal A}\ell_t)_t -  \sum_{j,k=1}^n ({\cal A}
b^{jk}\ell_{x_j})_{x_k}  + \dfrac 12
\Big[\big(\phi\Psi\big)_{tt} - \sum_{j,k=1}^n 
\big(b^{jk}\Psi_{x_j}\big)_{x_k}\Big],\\  
& c^{jk}\deq \big(\phi b^{jk}\ell_t\big)_t  + 
\sum_{j',k'=1}^n  \Big[2b^{jk'}(b^{j'k}\ell_{x_{j'}})_{x_{k'}}
- \big(b^{jk}b^{j'k'}\ell_{x_{j'}}\big)_{x_{k'}}\Big]  +  \Psi
b^{jk}.
\end{aligned}
\right.
\end{align}
\end{lemma}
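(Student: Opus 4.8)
The plan is to regard this as a \emph{pointwise weighted (Carleman) identity} obtained from a purely algebraic expansion after the substitution $w=\theta\varphi$, the only genuinely probabilistic ingredient being It\^o's formula in the time variable. First I would record the change-of-variable relations. Since $\theta=e^{\ell}$ is deterministic and smooth, the spatial derivatives transform classically, $\varphi_{x_j}=\theta^{-1}(w_{x_j}-\ell_{x_j}w)$, so that $\theta\sum_{j,k}(b^{jk}\varphi_{x_j})_{x_k}$ becomes an explicit expression in $w$. Because $\varphi$ is $C^{1}$ in time with $\varphi_t$ an It\^o process, the first-order relation $\varphi_t=\theta^{-1}(w_t-\ell_t w)$ holds pathwise (here $dw=w_t\,dt$ carries no martingale part). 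The delicate point is the second time differential: applying the It\^o product rule to $\varphi_t=\theta^{-1}(w_t-\ell_t w)$, with $\theta^{-1},\ell_t$ deterministic, yields
\[
\theta\phi\,d\varphi_t=\phi\,dw_t-2\phi\ell_t w_t\,dt+\phi(\ell_t^2-\ell_{tt})w\,dt,
\]
which isolates the leading differential $\phi\,dw_t$ together with the first contribution $\phi(\ell_t^2-\ell_{tt})$ to $\mathcal{A}$.

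Next I would reorganize $\theta P\varphi$, with $P\varphi=\phi\,d\varphi_t-\sum_{j,k}(b^{jk}\varphi_{x_j})_{x_k}dt$, into the canonical split
\[
\theta P\varphi=\Big[\phi\,dw_t-\sum_{j,k=1}^n(b^{jk}w_{x_j})_{x_k}\,dt+\mathcal{A}w\,dt\Big]+(\mathcal{M}w)\,dt,
\]
where $\mathcal{M}w\deq-2\phi\ell_t w_t+2\sum_{j,k=1}^n b^{jk}\ell_{x_j}w_{x_k}+\Psi w$ is exactly the multiplier appearing in the statement. The auxiliary function $\Psi$ enters only through the definition of $\mathcal{A}$, which is chosen so that the natural zeroth-order coefficient produced by the change of variables equals $\mathcal{A}+\Psi$. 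Multiplying by $\mathcal{M}w$ then gives
\[
\theta(\mathcal{M}w)P\varphi=(\mathcal{M}w)\Big[\phi\,dw_t-\sum_{j,k=1}^n(b^{jk}w_{x_j})_{x_k}\,dt+\mathcal{A}w\,dt\Big]+(\mathcal{M}w)^2\,dt,
\]
and the square $(\mathcal{M}w)^2\,dt$ is already the perfect-square term on the right-hand side of the identity.

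It remains to expand the first product on the right into divergence terms, exact time differentials, and a quadratic form in $(w_t,\nabla w,w)$. The spatial pieces $(\mathcal{M}w)[-\sum(b^{jk}w_{x_j})_{x_k}+\mathcal{A}w]\,dt$ are treated exactly as in the deterministic case by integrating by parts in $x$, producing the spatial divergence bracket together with the coefficients $c^{jk}$, $\mathcal{B}$ and partial contributions to the $w_t^2$ and $w_{x_j}w_t$ coefficients. The genuinely stochastic terms are those multiplying $\phi\,dw_t$, which I would reduce with the It\^o identities $w_t\,dw_t=\tfrac12 d(w_t^2)-\tfrac12(dw_t)^2$ and $w\,dw_t=d(ww_t)-w_t^2\,dt$. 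In particular $-2\phi^2\ell_t\,w_t\,dw_t=-d\{\phi^2\ell_t w_t^2\}+(\phi^2\ell_t)_t w_t^2\,dt+\phi^2\ell_t(dw_t)^2$; since the martingale part of $dw_t$ equals $\theta$ times that of $d\varphi_t$, one has $(dw_t)^2=\theta^2(d\varphi_t)^2$, and the leftover $\phi^2\ell_t(dw_t)^2=\phi^2\theta^2\ell_t(d\varphi_t)^2$ is precisely the It\^o-correction term on the right-hand side. The remaining mixed contributions $2\phi\sum b^{jk}\ell_{x_j}w_{x_k}\,dw_t$ and $\phi\Psi w\,dw_t$ produce the corresponding entries of the $d\{\cdots\}$ bracket and the stated $w_{x_j}w_t$ and $w_t^2$ coefficients. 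Collecting everything and matching coefficients confirms the formulas for $\mathcal{A}$, $\mathcal{B}$ and $c^{jk}$ in \cref{AB1}.

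The main obstacle is exactly this stochastic bookkeeping in the time variable. In the deterministic identity every product such as $w_t w_{tt}$ collapses to a clean time derivative $\tfrac12\partial_t(w_t^2)$, whereas here each occurrence of $dw_t$ must be processed through It\^o's formula, generating extra quadratic-variation and lower-order drift contributions. The crux is to verify that all of these corrections either assemble into the \emph{single} explicit term $\phi^2\theta^2\ell_t(d\varphi_t)^2$ or are absorbed into the drift coefficients $\mathcal{A}$, $\mathcal{B}$, $c^{jk}$ and the exact differential $d\{\cdots\}$, leaving no residual terms; establishing this precise cancellation is the heart of the argument, the rest being the standard deterministic multiplier computation.
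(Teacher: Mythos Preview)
Your outline is correct and is precisely the standard route to this identity: perform the substitution $w=\theta\varphi$, split $\theta P\varphi$ as principal part plus the multiplier $\mathcal{M}w$, multiply through by $\mathcal{M}w$, and then reduce the time products via It\^o's formula while handling the spatial products by classical integration-by-parts bookkeeping; the single It\^o correction $\phi^2\theta^2\ell_t(d\varphi_t)^2$ indeed arises exactly from $-2\phi^2\ell_t w_t\,dw_t$ as you describe. Note that the paper does not supply its own proof of this lemma---it is quoted verbatim from \cite[Lemma~10.18]{Lue2021a}---and the argument there follows the same multiplier-plus-It\^o scheme you have sketched.
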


Throughout this paper, for $\lambda\in\mathbb{R}$ and $p\in\mathbb{N}$, we denote by $O(\lambda^p)$ any function of order $\lambda^p$ for large $\lambda$.  

\begin{proof}[Proof of \cref{thmCarlemanEstimate}]

Choose\vspace{-2mm}
\begin{equation}
\label{eqPsi}
\varphi=z, \quad  \phi \equiv 1, \quad  \Psi = \ell_{tt} + \sum_{j,k=1}^n (b^{j k} \ell_{x_j})_{x_k} - c_{0} \lambda
,
\end{equation}
in \cref{lemmaFundamentalIdentity}.
We now estimate the terms in \cref{hyperbolic2} one by one.

We first note that 
\begin{align}
\label{eqEllEstimate}
\ell_{t} = - 2 c_{1} \lambda t, \quad 
\ell_{tt} = - 2 c_{1} \lambda, \quad
\ell_{x_{j}} = \lambda \psi_{x_{j}}, \quad
\ell_{t x_{j}} = 0,\quad j=1,\cdots,n
.
\end{align}
From \cref{eqEllEstimate,eqPsi}, we derive that
\begin{align}
\label{eqwt2}
\Big[(\phi^2\ell_{t})_t + \sum_{j,k =1}^n (\phi
b^{jk}\ell_{x_j})_{x_k} - \phi\Psi \Big]w_t^2 
= 
c_{0} \lambda w_t^2,
\end{align}
and \vspace{-2mm}
\begin{align}
\label{eqwjwt}
- 2\sum_{j,k=1}^n \big[(\phi b^{jk}\ell_{x_k})_t + b^{jk}(\phi\ell_{t})_{x_k}\big]w_{x_j}w_t  = 0.
\end{align}
Furthermore, combining \cref{eqEllEstimate,eqPsi,conB} yields
\begin{align}\label{eqvivj}\notag
\sum_{j,k=1}^n c^{jk}w_{x_j}w_{x_k} 
&  = 
\sum_{j,k=1}^n \Big\{ (b^{jk}\ell_t)_t \!+\! \sum_{j',k'=1}^n
\big[ 2b^{jk'}(b^{j'k}\ell_{x_{j'}})_{x_{k'}} \!-\!
(b^{jk}b^{j'k'}\ell_{x_{j'}})_{x_{k'}} \big] \!+\!
\Psi b^{jk} \Big\}w_{x_j} w_{x_k} 
\\ \notag
& = 
\sum_{j,k=1}^n \Big\{  2 b^{jk}\ell_{tt} + \sum_{j',k'=1}^n
\big[ 2b^{jk'}(b^{j'k}\ell_{x_{j'}})_{x_{k'}}-
b^{jk}_{x_{k'}}b^{j'k'}\ell_{x_{j'}} \big]
- c_{0} \lambda  b^{jk} \Big\}w_{x_j} w_{x_k} 
\\
& \geq 
\lambda (\mu_0 -4c_1 - c_0)\sum_{j,k=1}^nb^{jk}w_{x_j}w_{x_k}
.
\end{align}

From \cref{eqEllEstimate}, we obtain\vspace{-4mm}
\begin{align}
\label{eqA}
\mathcal{A} 
& = 
\lambda^{2} \Big( 4 c_{1}^{2}   t ^{2} - \sum_{j, k=1}^{n} b^{j k} \psi_{x_{j}} \psi_{x_{k}} \Big) + \mathcal{O} (\lambda)
\\ \notag
(\mathcal{A} \ell_{t})_{t} 
& =
- 24 c_{1}^{3} \lambda^{3} t^{2} + 2 c_{1} \lambda^{3} \sum_{j, k=1}^{n} b^{j k} \psi_{x_{j}} \psi_{x_{k}}   + \mathcal{O} (\lambda^{2})
,
\end{align}
and \vspace{-2mm}
\begin{align*} 
- \sum_{j, k=1}^{n} ( \mathcal{A} b^{j k} \ell_{x_{j}})_{x_{k}} 
&
= 
\lambda^{3} \sum_{j, k=1}^{n} \Big\{
- c_{1}^{2}  (b^{j k} \psi_{x_{j}})_{x_{k}} 
+ \sum_{j', k'=1}^{n}   (b^{j k} \psi_{x_{j}} \psi_{x_{k}})_{x_{k'}} b ^{j' k'} \psi_{x_{j'}}
\\
& \quad \quad \quad \quad \quad 
+\sum_{j', k'=1}^{n}   b^{j k} \psi_{x_{j}} \psi_{x_{k}} (b ^{j' k'} \psi_{x_{j'}})_{x_{k'}}
\Big\}
+ \mathcal{O} (\lambda^{2})
.
\end{align*}
Combining this with \cref{AB1,eqPsi,eqEllEstimate} yields
\begin{align}
\notag
\label{eqBestimate}
\mathcal{B}  &  =  (4c_1+c_0)\lambda^3 \sum_{j,k=1}^nb^{jk}\psi_{x_j}\psi_{x_k} +
\lambda^3\sum_{j,k=1}^n\sum_{j',k'=1}^nb^{jk}\psi_{x_j}
\big(b^{j'k'}\psi_{x_{j'}}\psi_{x_{k'}}\big)_{x_k} 
\\
& \quad   - 4 (8c_1^3 + c_0c_1^2)\lambda^3 t^2 + O(\lambda^2).
\end{align}
Under \cref{conB,conPsi2}, we establish from \cref{eqBestimate} the lower bound
\begin{align*}
\notag
\mathcal{B} 
& \geq (4c_1+c_0 + \mu_{0})\lambda^3 \sum_{j,k=1}^nb^{jk}\psi_{x_j}\psi_{x_k}  
- 4 (8c_1^3 + c_0c_1^2)\lambda^3 t^2 + O(\lambda^2)
\\ \notag
& \geq 4 (4c_1+c_0 + \mu_{0})\lambda^3  R_{1}^{2}
- 4 (8c_1^3 + c_0c_1^2)\lambda^3 T^2 + O(\lambda^2)
\\
& 
\geq  \big(4c_1+c_0\big)\lambda^3 \big(4R_1^2 - 8 c_1^2T^2\big) +
O(\lambda^2)
.
\end{align*}
By \cref{conPsi2}, there exists a positive constant $ \lambda_{1} $ such that for all $ \lambda \geq \lambda_{1} $, we have
\begin{align}
\label{eqB}
\mathcal{B} w^{2} \geq C \lambda^{3} w^{2}
.
\end{align}

Since $ \ell_{t}(0) = 0 $ and $ w_{t}(0) = \theta(0) z_{t} (0) + \theta_{t} (0) z(0) = 0 $, applying \cref{eqEllEstimate,eqPsi,eqA,conPsi2,conB} and the Cauchy-Schwarz inequality yields for any $ \varepsilon > 0 $:
\begin{align*}
& \mathbb{E} \int_{0}^{T} \int_{G}  d\Big\{ \phi\sum_{j,k=1}^n b^{jk}\ell_t w_{x_j} w_{x_k}\!-\!
2\phi\sum_{j,k=1}^n b^{jk}\ell_{x_j}w_{x_k}w_t  \!+\! \phi^2\ell_t w_t^2
\!-\! \phi\Psi w_t w 
\!+\! \Big[ \phi {\cal A}\ell_t \!+\!
\frac{1}{2}(\phi\Psi)_t\Big]w^2 \Big\}  
\\
& =
\mathbb{E} \int_{G} \Big(\sum_{j,k=1}^n b^{jk}\ell_t w_{x_j} w_{x_k}- 2\sum_{j,k=1}^n b^{jk}\ell_{x_j}w_{x_k}w_t  + \ell_t w_t^2 - \Psi w_t w +  \mathcal{A} \ell_{t} w^{2} \Big)\bigg|_{t = T} d x
\\
& = 
\mathbb{E} \int_{G} \Big[
- 2 c_{1} T \lambda \Big(\sum_{j,k=1}^n b^{jk}   w_{x_{j}} w_{x_{k}} + w_{t}^{2} \Big)
- 2\sum_{j,k=1}^n b^{jk}\ell_{x_j}w_{x_k}w_t    - \Psi w_t w 
\\
& \quad \quad \quad \quad 
- 2 c_{1} T \lambda^{3} \Big( 4 c_{1}^{2}   t ^{2} - \sum_{j, k=1}^{n} b^{j k} \psi_{x_{j}} \psi_{x_{k}} \Big)   w^{2} 
+ \mathcal{O}(\lambda^{2}) w^{2} \Big]\bigg|_{t = T} d x
\\
& \leq 
\mathbb{E} \int_{G} \big[
- 2 c_{1} T \lambda  ( s_{0}  |\nabla w|^{2} + w_{t}^{2}  )
+ \varepsilon \lambda |w_{t} |^{2} 
+ C(\varepsilon) \lambda (|\nabla w|^{2} + w^{2})
\\
& \quad \quad \quad \quad 
- 2 c_{1} T \lambda^{3}  ( 4 c_{1}^{2}   T ^{2} -  4 R_{1}^{2} )  w^{2} 
+ \mathcal{O}(\lambda^{2}) w^{2} \big ]\Big|_{t = T} d x
.
\end{align*}
Choosing $ \varepsilon < 2c_{1} T $ and noting $ c_{1} T < R_{1} $, there exists $ \lambda_{2} >0 $ such that for all $ \lambda \geq \lambda_{2} $, we have
\begin{align}
\label{eqIntD} \notag
& \mathbb{E} \int_{0}^{T} \int_{G}  d\Big\{ \phi\sum_{j,k=1}^n b^{jk}\ell_t w_{x_j} w_{x_k}\!-\!
2\phi\sum_{j,k=1}^n b^{jk}\ell_{x_j}w_{x_k}w_t  \!+\! \phi^2\ell_t w_t^2
\!-\!\phi\Psi w_t w 
\!+\! \Big[ \phi {\cal A}\ell_t \!+\!
\frac{1}{2}(\phi\Psi)_t\Big]w^2 \Big\}  
\\
& \leq 
C   \mathbb{E} \int_{G} (\lambda^{3} |w(T)|^{2} + \lambda | \nabla w(T)|^{2}) d x
.
\end{align}

Integrating \cref{hyperbolic2} over $Q$, taking mathematical expectation of the integral, and noting \cref{eqB,eqvivj,eqwt2,eqwjwt,eqIntD}, for all $ \lambda \geq \max\{\lambda_{1}, \lambda_{2} \}  $, we get that
\begin{align}\notag
\label{bhyperbolic31}
&  \mathbb{E} \int_Q \theta\Big\{ \Big( -2\ell_t w_t + 2
\sum_{j,k=1}^n b^{jk}\ell_{x_j}w_{x_k} +
\Psi w \Big) \Big[du_t- \sum_{j,k=1}^n \big(b^{jk}\varphi_{x_j}\big)_{x_k}dt \Big]  \Big\}dx 
\\ \notag
&\quad + \lambda
\mathbb{E} \int_{\Sigma}\sum_{j,k=1}^n \sum_{j',k'=1}^n \big(
2b^{jk}b^{j'k'}\psi_{x_{j'}}w_{x_j} w_{x_{k'}}   - 
b^{jk}b^{j'k'}\psi_{x_j} w_{x_{j'}}w_{x_{k'}} \big)\nu_k d \Gamma d t  
\\\notag
& \quad 
+ C \mathbb{E} \int_{G} \theta^{2} (\lambda |\nabla z|^{2} + \lambda^{3} |z|^{2}) \big|_{t = T} d x 
-  \mathbb{E} \int_Q  \theta \ell_t |d z_t|^2 d x
\\ \notag
& \geq C \mathbb{E}\int_Q \theta^2\Big[\big( \lambda |z_t|^2 + \lambda
|\nabla z|^2 \big) 
+   \lambda^3 |z|^2 \Big]dxdt 
\\
& \quad 
+ \mathbb{E}\int_Q\Big(-2\ell_t w_t +
2\sum_{j,k=1}^nb^{jk}\ell_{x_j}w_{x_k} + \Psi w\Big)^2
dxdt
,
\end{align}
where we used the fact that $w = \theta z$ and $ w = z = 0 $ on $ \Sigma $.
Noting that $ \frac{\partial z}{\partial \nu} = 0 $ on $ \Sigma_{0} $ and \cref{eqGamma0}, we find that 
\begin{align} \notag
\label{bhyperbolic32}
& \mathbb{E}\int_{\Sigma}\sum_{j,k=1}^n\sum_{j',k'=1}^n\Big(
2b^{jk}b^{j'k'}\psi_{x_{j'}}w_{x_j} w_{x_{k'}} -
b^{jk}b^{j'k'}\psi_{x_j} w_{x_{j'}}w_{x_{k'}}
\Big)\nu^k d \Gamma d t   
\\ \notag
&  =  \mathbb{E}\int_{\Sigma}\Big( \sum_{j,k=1}^nb^{jk}\nu^j \nu^k
\Big)\Big( \sum_{j',k'=1}^nb^{j'k'}\psi_{x_{j'}}\nu^{k'}
\Big)\theta^2\Big|\frac{\partial z}{\partial
\nu}\Big|^2d \Gamma d t  
\\ \notag
&  \leq \mathbb{E}\int_{\Sigma_0}\Big( \sum_{j,k=1}^nb^{jk}\nu^j
\nu^k \Big)\Big( \sum_{j',k'=1}^nb^{j'k'}\psi_{x_{j'}}\nu^{k'}
\Big)\theta^2\Big|\frac{\partial z}{\partial \nu}\Big|^2d \Gamma d t 
\\
& =
0
.
\end{align}
From \cref{eqLinearStochasticHyperbolic}, we infer that 
\begin{align}
\label{bhyperbolic3}
- \mathbb{E} \int_Q  \theta \ell_t |d z_t|^2 d x
\leq 2 c_{1} \lambda  T |a|_{L^{\infty}_{{\mathbb{F}}}(0,T;L^{\infty}(G))} \mathbb{E} \int_{Q} \theta^{2} |z|^{2} d x d t,
\end{align}
and \vspace{-4mm}
\begin{align}
\notag
\label{bhyperbolic4}
& \mathbb{E} \int_Q \theta\Big\{ \Big( -2\ell_t w_t + 2
\sum_{j,k=1}^n b^{jk}\ell_{x_j}w_{x_k} +
\Psi w \Big) \Big[dz_t- \sum_{j,k=1}^n\!\big(b^{jk}z_{x_j}\big)_{x_k}dt \Big]  \Big\}dx  
\\ 
& 
\leq     \mathbb{E}\int_Q \theta^2 |\Upsilon |^{2} dxdt 
+ \mathbb{E} \int_Q \Big ( - 2\ell_t w_t + \sum_{j,k=1}^n
b^{jk}\ell_{x_j} w_{x_k} +  \Psi w \Big )^2dxdt.
\end{align}

Combining \cref{bhyperbolic31,bhyperbolic32,bhyperbolic4,bhyperbolic3}, choosing $ \lambda_{0} \geq \max\{C, \lambda_{1}, \lambda_{2}\} $, we obtain the desired Carleman estimate \cref{eqCar1}.
\end{proof}

\section{A convergence result in determining the source}
\label{secRegularization}

Denote $ \mathcal{H}^{p} = L^{2}_{\mathbb{F}}(0,T; H^{p}(G)) \cap L^{2}_{\mathbb{F}} (\Omega; H^{1}(0,T; H^{1}(G))) $ for $ p \in \mathbb{N} $. 
Define an operator $ \mathcal{P} : \mathcal{H}^{2} \to L^{2}_{\mathbb{F}}(0,T; L^{2}(G)) $ by as follows:\vspace{-2mm}
\begin{align*}
(\mathcal{P} \varphi)(t,x) 
\deq & \varphi_{t}(t, x)
- \int_0^t  \sum_{j, k=1}^n\big(b^{j k}(s, x) \varphi_{x_{j}}(s, x)\big)_{x_{k}}  d s
\\
& 
-\int_0^t a(s, x) \varphi(s, x) d W(s), \quad \text{${\mathbb{P}}$-a.s.,}\quad \forall\,\varphi \in \mathcal{H},\;(t,x) \in Q.
\end{align*}
The admissible set is given by 
\begin{align}
\label{eqAdmissibleSet}
\mathcal{U} \deq \Big\{ \varphi \in \mathcal{H}^{2} \;\Big|\; \mathcal{P} \varphi \in L^{2}_{\mathbb{F}}(\Omega; H^{1}(0,T; L^{2} (G))), 
\quad \varphi_{t}(0) = 0,
\quad  \varphi |_{\Sigma} = f,  
\quad  \frac{\partial \varphi}{\partial \nu} \Big|_{\Sigma_{0}} = g
\Big\}.
\end{align}
For any solution $u$ to \eqref{eqSemlinearStochasticHyperbolic}, we have:
$$
\mathcal{P} u = \int_{0}^{t} \mathcal{F}(u) d s,\quad \mbox{ where }\; \mathcal{F}(u) \deq F(u, u_{t}, \nabla u).
$$
Under assumption \eqref{conF}, we obtain $ \mathcal{F}(u) \in  L^{2}_{\mathbb{F}}(\Omega; H^{1}(0,T; L^{2} (G)))$, which implies $ u \in \mathcal{U} $. 
Consequently, the admissible set $ \mathcal{U} \neq \emptyset $.

Let $ \theta$ be the weight function defined in \eqref{eqTheta}. We introduce the following weighted Sobolev spaces:\vspace{-2mm}
\begin{align*}
& L^{2, w}_{\mathbb{F}}(\Omega; H^{1}(0,T; L^{2}(G)))
\\
& \deq 
\Big\{ \varphi \in L^{2}_{\mathbb{F}}(\Omega; H^{1}(0,T; L^{2}(G))) \;\Big|\; 
\mathbb{E} \int_{0}^{T} \int_{G} \theta^{2} | \varphi|^{2} d x d t 
+ \mathbb{E} \int_{0}^{T} \int_{G} \theta^{2} | \varphi_{t} |^{2} d x d t 
< \infty \Big\} 
\end{align*}
and\vspace{-4mm}
\begin{align*}
L^{2, w}_{\mathbb{F}}(0,T; H^{p}(G)) 
\deq 
\Big\{ \varphi \in L^{2}_{\mathbb{F}}(0,T; H^{p}(G)) \;\Big|\;
\sum_{|\alpha|=0}^{p} \mathbb{E} \int_{0}^{T} \int_{G} \theta^{2} |D_{\alpha} \varphi|^{2} d x d t < \infty
\Big\},
\end{align*}
where $ \alpha = (\alpha_{1}, \cdots, \alpha_{n}) $ is a multi-index with $ |\alpha| = \sum\limits_{i=1}^{n} \alpha_{i} $ and $ D_{\alpha} = \frac{\partial^{|\alpha|}}{\partial \alpha_{1} \cdots \partial \alpha_{n}} $ denotes the weak derivative operator. 
Let\vspace{-2mm}
\begin{align*}
\mathcal{H}^{p}_{\lambda, c_{0}} \deq L^{2, w}_{\mathbb{F}}(0,T; H^{p}(G)) \cap L^{2, w}_{\mathbb{F}} (\Omega; H^{1}(0,T; H^{1}(G)))
,
\end{align*}
where $ \lambda, c_{0} $ are the parameters from \eqref{eqTheta}.
Each of these spaces, when equipped with its canonical norm, forms a Banach space.

Let the integral operator $\mathbf{I}$ be defined by\vspace{-2mm}
$$ \mathbf{I} f(t) = \int_{0}^{t} f(s) d s \quad\mbox{ for }\; t\in [0,T].$$

Let us construct a sequence $ \{u_{n}\}_{n\in\mathbb{N}} \in \mathcal{U} $ by induction. 

1. Select an arbitrary initial solution $ u_{0} \in \mathcal{U} $.  

2. Assume we have constructed a sequence $ u_{n} \in \mathcal{U} $ for $ n \geq 0 $.

3.  Define $ u_{n+1}  $ as the minimizer of the Tikhonov regularization functional:
\begin{align}
\label{eqFunctional}
J_{n} (\varphi) 
=
|\mathcal{P} \varphi - \mathbf{I} \mathcal{F}(u_{n}) |^{2}_{L^{2, w}_{\mathbb{F}}(\Omega; H^{1}(0,T; L^{2}(G)))}
+ \kappa |\varphi|_{\mathcal{H}^{2}_{\lambda, 0}}^{2}, 
\end{align}
where $ \varphi \in \mathcal{U} $ and $ \kappa \in (0,1) $ is the regularization parameter.

We have the following result on the existence and uniqueness of the solution to the minimization problem.

\begin{proposition}
For each fixed regularization parameter $ \kappa \in (0,1) $ and iteration index $ n \geq 0 $, the Tikhonov regularization functional $J_{n}$ defined in \cref{eqFunctional} admits a unique minimizer $u_{n+1}$ in the admissible set $\mathcal{U}$.
\end{proposition}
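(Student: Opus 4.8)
The plan is to prove the statement by the direct method in the calculus of variations, working entirely in the weighted space $\mathcal{H}^{2}_{\lambda,0}$, with the key nontrivial input being the Carleman estimate (\cref{thmCarlemanEstimate}), which supplies coercivity for the quadratic data-misfit term. First I would fix $n$ and $\kappa$, and observe that $J_n$ is the sum of a squared affine term and a positive-definite quadratic term, hence $J_n$ is a convex functional on the affine subspace $\mathcal{U}$; in fact, because $\kappa|\varphi|^2_{\mathcal{H}^2_{\lambda,0}}$ is strictly convex, $J_n$ is strictly convex, which immediately gives uniqueness of a minimizer once existence is established. Before doing either, I would verify that the problem is well posed: $\mathcal{U}$ is nonempty (already noted in the excerpt, since any solution $u$ of \cref{eqSemlinearStochasticHyperbolic} lies in $\mathcal{U}$), and $J_n(\varphi)<\infty$ for at least one $\varphi\in\mathcal{U}$, using \cref{conF} to guarantee $\mathcal{F}(u_n)\in L^2_{\mathbb{F}}(\Omega;H^1(0,T;L^2(G)))$ so that $\mathbf{I}\mathcal{F}(u_n)$ is an admissible datum.

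Next I would take a minimizing sequence $\{\varphi_m\}\subset\mathcal{U}$ with $J_n(\varphi_m)\to\inf_{\mathcal{U}}J_n=:\mathfrak{m}<\infty$. The regularization term then forces $\{\varphi_m\}$ to be bounded in $\mathcal{H}^2_{\lambda,0}$, which is a Hilbert space (its canonical norm comes from an inner product), so after passing to a subsequence we get $\varphi_m\rightharpoonup\bar\varphi$ weakly in $\mathcal{H}^2_{\lambda,0}$. The heart of the existence argument is to show $\bar\varphi\in\mathcal{U}$: one must check that the affine constraints defining $\mathcal{U}$ — $\varphi_t(0)=0$, $\varphi|_\Sigma=f$, $\partial\varphi/\partial\nu|_{\Sigma_0}=g$, and $\mathcal{P}\varphi\in L^2_{\mathbb{F}}(\Omega;H^1(0,T;L^2(G)))$ — are preserved under weak limits. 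Since these are all closed affine (indeed linear, modulo translation by a fixed element of $\mathcal{U}$) conditions, and the trace operators $\varphi\mapsto\varphi_t(0)$, $\varphi\mapsto\varphi|_\Sigma$, $\varphi\mapsto\partial\varphi/\partial\nu|_{\Sigma_0}$, together with $\mathcal{P}$, are bounded linear maps on the relevant spaces, $\mathcal{U}$ is weakly closed, so $\bar\varphi\in\mathcal{U}$. Then weak lower semicontinuity of both terms of $J_n$ — the data-misfit term is the square of a norm of a weakly continuous affine image of $\varphi$, and the regularization term is the square of the $\mathcal{H}^2_{\lambda,0}$-norm, both convex and continuous hence weakly lower semicontinuous — yields $J_n(\bar\varphi)\le\liminf_m J_n(\varphi_m)=\mathfrak{m}$, so $\bar\varphi$ is a minimizer.

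For uniqueness, suppose $\varphi_1,\varphi_2\in\mathcal{U}$ both minimize $J_n$. Since $\mathcal{U}$ is convex, $\tfrac12(\varphi_1+\varphi_2)\in\mathcal{U}$, and by strict convexity of $\varphi\mapsto\kappa|\varphi|^2_{\mathcal{H}^2_{\lambda,0}}$ (parallelogram law) together with convexity of the misfit term, $J_n\bigl(\tfrac12(\varphi_1+\varphi_2)\bigr)<\tfrac12 J_n(\varphi_1)+\tfrac12 J_n(\varphi_2)=\mathfrak{m}$ unless $\varphi_1=\varphi_2$, contradicting minimality; hence $\varphi_1=\varphi_2=:u_{n+1}$.

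The main obstacle I anticipate is the verification that the weak limit $\bar\varphi$ retains \emph{all} the structure required for membership in $\mathcal{U}$ — in particular that $\mathcal{P}\bar\varphi$ still lies in $L^2_{\mathbb{F}}(\Omega;H^1(0,T;L^2(G)))$ rather than merely in $L^2_{\mathbb{F}}(0,T;L^2(G))$, and that the stochastic-integral term in $\mathcal{P}$ behaves well under weak convergence (one should exploit that $a\in L^\infty_{\mathbb{F}}(0,T;L^\infty(G))$ so that $\varphi\mapsto\int_0^\cdot a\varphi\,dW$ is a bounded linear operator into the requisite space, using the Itô isometry). Care is also needed because the stochastic setting lacks the compact embeddings available in the deterministic case (a point the authors themselves stress), so one genuinely must argue via closedness/weak lower semicontinuity of convex functionals rather than via strong compactness; this is exactly why the argument above is built only from convexity and boundedness of linear operators, not from any Rellich-type compactness.
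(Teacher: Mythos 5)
Your proposal is correct in substance but follows a genuinely different route from the paper. You run the direct method: coercivity from the $\kappa$-term gives a minimizing sequence bounded in $\mathcal{H}^{2}_{\lambda,0}$, you extract a weak limit, argue that $\mathcal{U}$ is weakly closed and both terms of $J_n$ are weakly lower semicontinuous, and get uniqueness from strict convexity. The paper instead shifts by a fixed reference $R\in\mathcal{U}$ to reduce to the homogeneous set $\mathcal{U}_0^{w}$, equips that set with the combined inner product $\langle \mathcal{P}\cdot,\mathcal{P}\cdot\rangle + \kappa\langle\cdot,\cdot\rangle_{\mathcal{H}^2_{\lambda,0}}$, passes to its completion, writes down the Euler--Lagrange identity \cref{eqEulerLagrange2}, and obtains existence and uniqueness in one stroke from the Riesz representation theorem, afterwards checking by a quadratic expansion that the Euler--Lagrange solution really minimizes. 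What the paper's route buys is brevity and, importantly, the explicit variational identity \cref{eqEulerLagrange}, which is the starting point of the convergence proof of \cref{thmConverge}; its cost is the completion step, where one tacitly identifies limit objects with elements of $\mathcal{H}^2$ satisfying the constraints. Your route avoids that completion, but the burden shifts to exactly the point you flag: $\mathcal{P}$ is \emph{not} bounded from $\mathcal{H}^{2}_{\lambda,0}$ into $L^{2,w}_{\mathbb{F}}(\Omega;H^{1}(0,T;L^{2}(G)))$ (the It\^o integral and $\varphi_t$ are generically not $H^1$ in time — that extra regularity is precisely what defines $\mathcal{U}$), so you cannot literally invoke "weak continuity of a bounded affine image"; instead you must use the a priori bound on $\mathcal{P}\varphi_m-\mathbf{I}\mathcal{F}(u_n)$ supplied by $J_n$ itself, extract a weak limit $\chi$ in the $H^1$-in-time space, and identify $\chi=\mathcal{P}\bar\varphi-\mathbf{I}\mathcal{F}(u_n)$ via boundedness of $\mathcal{P}$ into the weaker space $L^{2}_{\mathbb{F}}(0,T;L^{2}(G))$ (It\^o isometry for the stochastic term, and equivalence of weighted and unweighted norms since $\theta$ is bounded above and below on $\overline{Q}$); this simultaneously shows $\bar\varphi\in\mathcal{U}$ and gives lower semicontinuity of the misfit term. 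With that step spelled out, your argument is complete and, if anything, more self-contained on the functional-analytic side, though one would still want to derive the Euler--Lagrange equation afterwards since the paper's later analysis relies on it.
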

\begin{proof}
We begin by defining the weighted solution space:\vspace{-2mm}
\begin{align*}
\mathcal{U}_{0}^{w} \deq \Big\{ \varphi \in \mathcal{H}^{2} \;\Big|\;\mathcal{P} \varphi \in L^{2}_{\mathbb{F}}(\Omega; H^{1}(0,T; L^{2} (G))), 
\quad \varphi_{t}(0) = 0,
\quad  \varphi |_{\Sigma} = 0,  
\quad  \frac{\partial \varphi}{\partial \nu} \Big|_{\Sigma_{0}} = 0
\Big\}.
\end{align*}
For fixed $ \kappa \in (0,1) $, we endow $ \mathcal{U}_{0}^{w} $ with the inner product: \vspace{-2mm}
\begin{align}
\label{eqInnerProduct}
\langle p, q \rangle_{\mathcal{U}_{0}^{w}} 
\deq 
\langle \mathcal{P} p, \mathcal{P} q \rangle_{L^{2, w}_{\mathbb{F}}(\Omega; H^{1}(0,T; L^{2}(G)))}
+ \kappa \langle p, q \rangle_{\mathcal{H}^{2}_{\lambda, 0}},
\quad \forall\, p, q \in \mathcal{U}_{0}^{w}.
\end{align}
For simplicity of notations, we still denote by $ \mathcal{U}_{0}^{w} $ the completion of $ \mathcal{U}_{0}^{w} $ with respect to the norm $ | \cdot |_{\mathcal{U}_{0}^{w}} $.

Given a reference function $ R \in \mathcal{U}$, we consider the shifted variable $ v = \varphi - R $ and define the functional: \vspace{-2mm}
\begin{align*}
\overline{\mathcal{J}}_{n} (v) = 
|\mathcal{P} v + \mathcal{P} R - \mathbf{I} \mathcal{F}(u_{n}) |^{2}_{L^{2, w}_{\mathbb{F}}(\Omega; H^{1}(0,T; L^{2}(G)))}
+ \kappa |v+ R|_{\mathcal{H}^{2}_{\lambda, 0}}^{2}, 
\quad \forall\, v \in \mathcal{U}_{0}^{w}.
\end{align*}
The equivalence between minimizing $ \mathcal{J}_{n} $ 
and $ \overline{\mathcal{J}}_{n} $ 
is clear: $ v_{n+1} $ 
minimizes $ \overline{\mathcal{J}}_{n} $ if and only if $ u_{n+1} = v_{n+1} + R $ minimizes $ \mathcal{J}_{n} $.

The variational principle yields the Euler-Lagrange equation for the minimizer  $ v_{n+1} $:
\begin{align}
\label{eqEulerLagrange}
\langle \mathcal{P} v_{n+1} + \mathcal{P} R - \mathbf{I} \mathcal{F}(u_{n}), \mathcal{P} \rho \rangle_{L^{2, w}_{\mathbb{F}}(\Omega; H^{1}(0,T; L^{2}(G)))} 
+ \kappa \langle v_{n+1} + R, \rho \rangle_{\mathcal{H}^{2}_{\lambda, 0}} = 0,
\quad \forall\, \rho \in \mathcal{U}_{0}^{w}.
\end{align}

To verify that $ v_{n+1} $ is the minimizer of $ \overline{\mathcal{J}}_{n} $, consider for any $ \rho \in \mathcal{U}_{0}^{w} $:
\begin{align*}
& \overline{\mathcal{J}}_{n}(\rho) - \overline{\mathcal{J}}_{n}(v_{n+1}) 
\\
& =
| \mathcal{P} \rho + \mathcal{P} R - \mathbf{I} \mathcal{F}(u_{n}) |^{2}_{L^{2, w}_{\mathbb{F}}(\Omega;H^{1}(0,T;L^{2}(G)))} + \kappa | \rho |^{2}_{\mathcal{H}^{2}_{\lambda, 0}}
\\
& \quad 
- | \mathcal{P} v_{n+1} + \mathcal{P} R - \mathbf{I} \mathcal{F}(u_{n}) |^{2}_{L^{2, w}_{\mathbb{F}}(\Omega;H^{1}(0,T;L^{2}(G)))} 
- \kappa | v_{n+1} |^{2}_{\mathcal{H}^{2}_{\lambda, 0}}
\\
& = 
| \mathcal{P} v_{n+1} + \mathcal{P} R - \mathbf{I} \mathcal{F}(u_{n}) + \mathcal{P}(\rho- v_{n+1})|^{2}_{L^{2, w}_{\mathbb{F}}(\Omega;H^{1}(0,T;L^{2}(G)))} 
\\
& \quad 
+ \kappa | v_{n+1} + (\rho-v_{n+1}) |^{2}_{\mathcal{H}^{2}_{\lambda, 0}}
- | \mathcal{P} v_{n+1} + \mathcal{P} R - \mathbf{I} \mathcal{F}(u_{n}) |^{2}_{L^{2, w}_{\mathbb{F}}(\Omega;H^{1}(0,T;L^{2}(G)))} 
- \kappa | v_{n+1} |^{2}_{\mathcal{H}^{2}_{\lambda, 0}}
\\
& =
2 \langle \mathcal{P} v_{n+1} + \mathcal{P} R - \mathbf{I} \mathcal{F}(u_{n}), \mathcal{P} (\rho-v_{n+1})   \rangle _{L^{2, w}_{\mathbb{F}}(\Omega;H^{1}(0,T;L^{2}(G)))}
\\
& \quad 
+ 2 \kappa \langle v_{n+1}, \rho-v_{n+1} \rangle _{\mathcal{H}^{2}_{\lambda, 0}}
+ | \mathcal{P} (\rho - v_{n+1}) |^{2}_{L^{2, w}_{\mathbb{F}}(\Omega;H^{1}(0,T;L^{2}(G)))} 
+ \kappa | \rho-v_{n+1} |^{2}_{\mathcal{H}^{2}_{\lambda, 0}}
\\
& =
| \mathcal{P} (\rho - v_{n+1}) |^{2}_{L^{2, w}_{\mathbb{F}}(\Omega;H^{1}(0,T;L^{2}(G)))} 
+ \kappa | \rho-v_{n+1} |^{2}_{\mathcal{H}^{2}_{\lambda, 0}}
\\
& \geq
0,
\end{align*}
where we used \eqref{eqEulerLagrange} to eliminate the first two terms.  

Using \eqref{eqInnerProduct}, we can rewrite \eqref{eqEulerLagrange} as:
\begin{align}
\label{eqEulerLagrange2}
\langle v_{n+1}, \rho \rangle_{\mathcal{U}_{0}^{w}} 
= \langle \mathbf{I} \mathcal{F}(u_{n}) - \mathcal{P} R, \mathcal{P} \rho \rangle_{L^{2, w}_{\mathbb{F}}(\Omega; H^{1}(0,T; L^{2}(G)))},
\quad \forall\, \rho \in \mathcal{U}_{0}^{w}.
\end{align}
The right-hand side of \cref{eqEulerLagrange2} defines a bounded linear functional on $\mathcal{U}_{0}^{w}$  since
\begin{align*}
\langle \mathbf{I} \mathcal{F}(u_{n}) - \mathcal{P} R, \mathcal{P} \rho \rangle_{L^{2, w}_{\mathbb{F}}(\Omega; H^{1}(0,T; L^{2}(G)))}
\leq 
(|\mathcal{P} R|_{L^{2, w}_{\mathbb{F}}(\Omega; H^{1}(0,T; L^{2}(G)))} 
+ C  |u_{n}|_{\mathcal{H}^{1}_{\lambda, c_{0}}}
) | \rho|_{\mathcal{U}_{0}^{w}}.
\end{align*}
Thanks to Riesz representation theorem, there exists a unique $ w_{n+1} \in \mathcal{U}_{0}^{w} $ satisfying \eqref{eqEulerLagrange2}.

Assume that $ \widetilde{w}_{n+1} $ is another minimizer of $ \overline{\mathcal{J}}_{n} $. Then $ \widetilde{w}_{n+1} $ satisfies  \cref{eqEulerLagrange2}.
For all $ \rho \in \mathcal{U}_{0}^{w} $, we obtain \vspace{-3mm}
$$
\langle w_{n+1} - \widetilde{w}_{n+1}, \rho \rangle_{\mathcal{U}_{0}^{w}} 
= 0,
$$
which implies that $ w_{n+1} = \widetilde{w}_{n+1} $.
Hence, the minimizer of $ \overline{\mathcal{J}}_{n} $ is unique.
\end{proof}

Let $ u^{*} $
denote the exact solution to the semilinear stochastic hyperbolic system \eqref{eqSemlinearStochasticHyperbolic} with the exact boundary data $ u^{*} |_{\Sigma} = f^{*} $ and $ \dfrac{\partial u}{\partial \nu}  \bigg|_{\Sigma_{0}} = g^{*} $. 
Assume that there is a function $ \mathcal{E} \in \mathcal{H}^{2} $ satisfying\vspace{-2mm}
\begin{align}
\label{eqNoise}
|\mathcal{P} \mathcal{E} |_{L^{2}_{\mathbb{F}}(0,T; H^{1}(0,T; L^{2}(G)))} 
+ |\mathcal{E}|_{\mathcal{H}^{2}} \leq \delta,
\quad \mathcal{E}_{t}(0) = 0,
\quad 
f |_{\Sigma} = f^{*} + \mathcal{E}|_{\Sigma},
\quad 
g = g^{*} + \dfrac{\partial \mathcal{E}}{\partial \nu}  \bigg|_{\Sigma_{0}},\vspace{-2mm}
\end{align}
where $ \delta > 0 $ quantifies the noise level in the measurements.

\begin{remark}
Here, we assume that the boundary measurement data $f$ and $g$ can be used to construct a function $\mathcal{E}$, so that the error is considered only in terms of $\mathcal{E}$, as in \cite{Klibanov2021,Nguyen2022}. This is a key point that allows us to work with partial boundary measurements. However, our numerical experiments show that the method converges effectively even for more general noise, as in \cref{eqNoisyData}. 
At present, we do not know how to obtain convergence results for semilinear stochastic hyperbolic equations under the assumption $|f-f^{*}|+|g-g^{*}|<\delta$ in a suitable norm, as has been achieved for linear stochastic hyperbolic equations in \cite{Dou2024a}.
\end{remark}

We have the following convergence result.
\begin{theorem}
\label{thmConverge}
Let the regularization parameter $ \kappa > 0 $ be fixed, and $ c_{0} $ satisfy \cref{conPsi2}.
Fix $ \gamma \in (0,1) $. 
There exist positive constants $ C  $ and  $ \widetilde{\lambda}_{0} > C $ such that for all $ \lambda \geq \widetilde{\lambda}_{0} $, $ \delta < e^{-\lambda R_{1}^{2} / \gamma} $ and $ n \in \mathbb{N} $, we have\vspace{-4mm}
\begin{align*}
|u_{n+1} - u^{*}|_{\mathcal{H}^{1}_{\lambda, c_{0}}}^{2}
\leq 
\bigg(\frac{C}{\lambda}\bigg)^{n+1} | u_{0} - u^{*} |^{2}_{\mathcal{H}^{1}_{\lambda, c_{0}}} 
+ C (\delta^{2- 2 \gamma} + \kappa|u^{*}|^{2}_{\mathcal{H}^{2}_{\lambda, 0}})
,
\end{align*}
where $ u_{n+1} $ is the minimizer of the functional \cref{eqFunctional}.
\end{theorem}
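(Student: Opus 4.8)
The plan is to prove \cref{thmConverge} by induction on $n$, deriving a one-step contraction estimate from the Carleman estimate of \cref{thmCarlemanEstimate} applied to the difference $z_{n+1} \deq u_{n+1} - u^{*}$. First I would set up the key equation for $z_{n+1}$. Since $u_{n+1}$ minimizes $J_n$, it satisfies the Euler--Lagrange identity
\begin{align*}
\langle \mathcal{P} u_{n+1} - \mathbf{I}\mathcal{F}(u_{n}), \mathcal{P}\rho \rangle_{L^{2,w}_{\mathbb{F}}(\Omega;H^{1}(0,T;L^{2}(G)))} + \kappa \langle u_{n+1}, \rho \rangle_{\mathcal{H}^{2}_{\lambda,0}} = 0
\end{align*}
for all test functions $\rho$ vanishing on $\Sigma$ with normal derivative vanishing on $\Sigma_0$ and $\rho_t(0)=0$. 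Meanwhile the exact solution satisfies $\mathcal{P}u^{*} = \mathbf{I}\mathcal{F}(u^{*})$. Using the noise function $\mathcal{E}$ from \cref{eqNoise}, the shifted error $z_{n+1} - \mathcal{E}$ (or a suitable variant) lies in the homogeneous-boundary space, so I can take $\rho = z_{n+1} - \mathcal{E}$ (with appropriate care) to get an energy identity. The upshot is a bound of the form $|\mathcal{P}z_{n+1}|^{2}_{L^{2,w}} + \kappa|z_{n+1}|^{2}_{\mathcal{H}^{2}_{\lambda,0}} \lesssim |\mathbf{I}(\mathcal{F}(u_n) - \mathcal{F}(u^{*}))|^{2}_{L^{2,w}} + \kappa|u^{*}|^{2}_{\mathcal{H}^{2}_{\lambda,0}} + (\text{noise terms in } \mathcal{E})$.

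Next I would estimate the nonlinear term. By the Lipschitz assumption \cref{conF}(2), $|\mathcal{F}(u_n) - \mathcal{F}(u^{*})| \leq L(|z_n| + |(z_n)_t| + |\nabla z_n|)$ pointwise, hence $|\mathbf{I}(\mathcal{F}(u_n)-\mathcal{F}(u^{*}))|^{2}_{L^{2,w}_{\mathbb{F}}(\Omega;H^{1}(0,T;L^{2}(G)))} \leq C\,|z_n|^{2}_{\mathcal{H}^{1}_{\lambda,c_0}}$, using that the weight $\theta = e^{\lambda(\psi - c_0 t^2)}$ is essentially monotone in a way compatible with the time integration $\mathbf{I}$ (this is where the specific choice $c_0$ and the structure of $\theta$ matter — I would check $\theta(t)^2 \leq C\theta(s)^2$ for $s \le t$ is \emph{not} what is needed; rather, since $\mathbf{I}$ integrates forward and $\theta$ is largest at $t=0$ in time, $\int_G\theta^2|\int_0^t h\,ds|^2 \le CT\int_G\int_0^t\theta(s)^2|h(s)|^2\,ds \le CT\int_Q\theta^2|h|^2$ up to the $t$-dependence of $\theta$, which is bounded). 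Then I apply the Carleman estimate \cref{eqCar1} to $z_{n+1}$ with $\Upsilon = \mathcal{F}(u_{n+1}) - \mathcal{F}(u^{*})$ (from $\mathcal{P}z_{n+1} = \mathbf{I}(\mathcal{F}(u_{n+1})-\mathcal{F}(u^{*})) + (\text{error})$, differentiated in $t$), absorbing the resulting $C\lambda^{-1}|z_{n+1}|^2_{\mathcal{H}^1_{\lambda,c_0}}$-type term on the left for $\lambda$ large. The boundary term $C\mathbb{E}\int_G\theta^2(\lambda|\nabla z_{n+1}|^2 + \lambda^3|z_{n+1}|^2)|_{t=T}\,dx$ from the right-hand side of \cref{eqCar1} needs to be controlled by the $\kappa$-regularization term via a trace-type inequality in the $\mathcal{H}^2_{\lambda,0}$ norm (or absorbed using $\kappa|u_{n+1}|^2_{\mathcal{H}^2_{\lambda,0}}$ bounds from the energy identity), which contributes the $\kappa|u^{*}|^2_{\mathcal{H}^2_{\lambda,0}}$ term in the final estimate.

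Combining, I obtain the contraction $|z_{n+1}|^{2}_{\mathcal{H}^{1}_{\lambda,c_0}} \leq \frac{C}{\lambda}|z_n|^{2}_{\mathcal{H}^{1}_{\lambda,c_0}} + C(\delta^2 e^{2\lambda R_1^2} + \kappa|u^{*}|^2_{\mathcal{H}^2_{\lambda,0}})$, where the $\delta^2 e^{2\lambda R_1^2}$ arises because the noise $\mathcal{E}$ satisfies an \emph{unweighted} bound $\leq\delta$ while the estimate is in weighted norms with $\sup\theta^2 \leq e^{2\lambda R_1^2}$ (recalling $R_1^2 = \max_{\overline G}\psi$). Then I iterate: unrolling the recursion gives $|z_{n+1}|^2_{\mathcal{H}^1_{\lambda,c_0}} \leq (C/\lambda)^{n+1}|z_0|^2_{\mathcal{H}^1_{\lambda,c_0}} + C(1 - C/\lambda)^{-1}(\delta^2 e^{2\lambda R_1^2} + \kappa|u^{*}|^2_{\mathcal{H}^2_{\lambda,0}})$, and for $\lambda \geq \widetilde\lambda_0 \geq 2C$ the geometric series sums to a constant. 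Finally, choosing the relation between $\delta$ and $\lambda$ via $\delta < e^{-\lambda R_1^2/\gamma}$ gives $\delta^2 e^{2\lambda R_1^2} = \delta^{2}\,(\delta^{-\gamma})^{2R_1^2/R_1^2}$... more precisely $e^{2\lambda R_1^2} < \delta^{-2\gamma}$, so $\delta^2 e^{2\lambda R_1^2} < \delta^{2-2\gamma}$, yielding the stated $C(\delta^{2-2\gamma} + \kappa|u^{*}|^2_{\mathcal{H}^2_{\lambda,0}})$ bound. The main obstacle I anticipate is the rigorous justification of using $z_{n+1} - \mathcal{E}$ (rather than $z_{n+1}$ itself) as a test function while simultaneously getting the Carleman estimate to apply — the Carleman estimate \cref{thmCarlemanEstimate} requires homogeneous boundary conditions, so one must carefully split $z_{n+1} = (z_{n+1} - \mathcal{E}) + \mathcal{E}$, apply \cref{eqCar1} to the homogeneous part, and track all the $\mathcal{E}$-dependent remainders (including $\mathcal{P}\mathcal{E}$ and its time derivative) through the weighted norms, controlling them by $\delta e^{\lambda R_1^2}$; keeping the constants uniform in $n$ and $\lambda$ throughout this bookkeeping, especially in the absorption steps, is the delicate part.
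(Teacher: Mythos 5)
Your proposal follows essentially the same route as the paper: subtract the Euler--Lagrange identity of the minimizer from the exact relation $\mathcal{P}u^{*}=\mathbf{I}\mathcal{F}(u^{*})$, test with the shifted error $z=u_{n+1}-u^{*}-\mathcal{E}\in\mathcal{U}_{0}^{w}$, bound $\mathbf{I}(\mathcal{F}(u_n)-\mathcal{F}(u^{*}))$ by the Lipschitz condition, apply the Carleman estimate to $z$ (the paper does this directly with $\Upsilon=\partial_t(\mathcal{P}z)$, so the Carleman right-hand side is exactly $|\mathcal{P}z|^{2}_{L^{2,w}}$ already controlled by the energy identity, which avoids your extra absorption of a $\mathcal{F}(u_{n+1})-\mathcal{F}(u^{*})$ source and its error term), absorb the $t=T$ trace term into $\kappa|z|^{2}_{\mathcal{H}^{2}_{\lambda,0}}$ via the factor $e^{-2c_{0}\lambda T^{2}}$, and iterate the resulting $C/\lambda$ contraction with $\delta<e^{-\lambda R_{1}^{2}/\gamma}$ converting the unweighted noise bound into $\delta^{2-2\gamma}$. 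This matches the paper's proof in all essential steps.
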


\begin{proof} 
Since $ u_{n+1} $ minimizes \eqref{eqFunctional}, the Euler-Lagrange equation \eqref{eqEulerLagrange} gives 
\begin{align*}
\langle \mathcal{P} u_{n+1} - \mathbf{I} \mathcal{F}(u_{n}), \mathcal{P} \rho \rangle_{L^{2, w}_{\mathbb{F}}(\Omega; H^{1}(0,T; L^{2}(G)))}
+ \kappa \langle u_{n+1}, \rho \rangle_{\mathcal{H}^{2}_{\lambda, 0}} = 0, 
\quad \forall\, \rho \in \mathcal{U}_{0}^{w}.
\end{align*}
For the exact solution $ u^{*} $ 
satisfying $ \mathcal{P} u^{*} - \mathbf{I}\mathcal{F}(u^{*}) = 0 $, we have
\begin{align*}
\langle \mathcal{P} ^{*} - \mathbf{I} \mathcal{F}(u^{*}), \mathcal{P} \rho \rangle_{L^{2, w}_{\mathbb{F}}(\Omega; H^{1}(0,T; L^{2}(G)))}
+ \kappa \langle u^{*}, \rho \rangle_{\mathcal{H}^{2}_{\lambda, 0}} 
=  \kappa \langle u^{*}, \rho \rangle_{\mathcal{H}^{2}_{\lambda, 0}}, 
\quad \forall\, \rho \in \mathcal{U}_{0}^{w}.
\end{align*}
Subtracting the above two equations  yields that\vspace{-2mm}
\begin{equation}
\label{eqConvergence1}
\langle \mathcal{P} (u_{n+1}\! - u^{*})  -  ( \mathbf{I} \mathcal{F}(u_{n})  -  \mathbf{I} \mathcal{F}(u^{*})) , \mathcal{P} \rho \rangle_{L^{2, w}_{\mathbb{F}}(\Omega; H^{1}(0,T; L^{2}(G)))}
\!+  \kappa \langle u_{n+1} \!-\! u^{*}, \rho \rangle_{\mathcal{H}^{2}_{\lambda, 0}} 
= \!- \kappa \langle u^{*}, \rho \rangle_{\mathcal{H}^{2}_{\lambda, 0}}.\vspace{-2mm}
\end{equation}
Letting $ z = u_{n+1} -u^{*} - \mathcal{E} $, by \cref{eqNoise}, we get $ z \in \mathcal{U}_{0}^{w} $.
From \cref{eqConvergence1}, we arrive at
\begin{align*}
\langle \mathcal{P} (z + \mathcal{E}) - ( \mathbf{I} \mathcal{F}(u_{n}) - \mathbf{I} \mathcal{F}(u^{*})) , \mathcal{P} z  \rangle_{L^{2, w}_{\mathbb{F}}(\Omega; H^{1}(0,T; L^{2}(G)))}
+ \kappa \langle z + \mathcal{E}, z \rangle_{\mathcal{H}^{2}_{\lambda, 0}} 
= - \kappa \langle u^{*}, z \rangle_{\mathcal{H}^{2}_{\lambda, 0}}.
\end{align*}
Hence, we obtain \vspace{-4mm}
\begin{align}
\label{eqConvergence2} 
&|\mathcal{P} z|_{L^{2, w}_{\mathbb{F}}(\Omega; H^{1}(0,T; L^{2}(G)))}^{2}
+ \kappa |z|_{\mathcal{H}^{2}_{\lambda, 0}}^{2}\notag\\
& = 
\langle  ( \mathbf{I} \mathcal{F}(u_{n}) - \mathbf{I} \mathcal{F}(u^{*})) , \mathcal{P} z \rangle_{L^{2, w}_{\mathbb{F}}(\Omega; H^{1}(0,T; L^{2}(G)))}
-   \kappa \langle u^{*}, z \rangle_{\mathcal{H}^{2}_{\lambda, 0}} \\&\quad - \langle \mathcal{P}   \mathcal{E}  , \mathcal{P} z  \rangle_{L^{2, w}_{\mathbb{F}}(\Omega; H^{1}(0,T; L^{2}(G)))}
-   \kappa \langle \mathcal{E}, z \rangle_{\mathcal{H}^{2}_{\lambda, 0}}.\notag
\end{align}

Next, we estimate the right-hand side of \cref{eqConvergence2} one by one. 
From Cauchy-Schwarz inequality and \cref{conF}, for $ \varepsilon > 0 $, we get that\vspace{-2mm}
\begin{align}
\label{eqConvergence3} \notag
& \langle  ( \mathbf{I} \mathcal{F}(u_{n}) - \mathbf{I} \mathcal{F}(u^{*})) , \mathcal{P} z \rangle_{L^{2, w}_{\mathbb{F}}(\Omega; H^{1}(0,T; L^{2}(G)))}
\\ \notag
& \leq 
C(\varepsilon) | \mathbf{I} \mathcal{F}(u_{n}) - \mathbf{I} \mathcal{F}(u^{*}) |_{L^{2, w}_{\mathbb{F}}(\Omega; H^{1}(0,T; L^{2}(G)))}^{2} 
+ \varepsilon | \mathcal{P} z |_{L^{2, w}_{\mathbb{F}}(\Omega; H^{1}(0,T; L^{2}(G)))}^{2}
\\
& \leq 
C(\varepsilon) |u_{n} - u^{*}|_{\mathcal{H}^{1}_{\lambda, c_{0}}}^{2} 
+ \varepsilon | \mathcal{P} z |_{L^{2, w}_{\mathbb{F}}(\Omega; H^{1}(0,T; L^{2}(G)))}^{2}
,\vspace{-2mm}
\end{align}
\begin{align}
\label{eqConvergence4}  
-   \kappa \langle u^{*}, z \rangle_{\mathcal{H}^{2}_{\lambda, 0}}
-   \kappa \langle \mathcal{E}, z \rangle_{\mathcal{H}^{2}_{\lambda, 0}}
\leq 
\frac{\kappa}{\varepsilon} | u^{*} |_{\mathcal{H}^{2}_{\lambda, 0}}^{2}
+ \frac{\kappa}{\varepsilon} | \mathcal{E} |_{\mathcal{H}^{2}_{\lambda, 0}}^{2} 
+ 2 \varepsilon \kappa | z |_{\mathcal{H}^{2}_{\lambda, 0}}^{2}
,
\end{align}
and that
\begin{align}
\label{eqConvergence5}  
- \langle \mathcal{P}   \mathcal{E}  , \mathcal{P} z  \rangle_{L^{2, w}_{\mathbb{F}}(\Omega; H^{1}(0,T; L^{2}(G)))}
\leq 
C(\varepsilon) | \mathcal{P} \mathcal{E} |_{L^{2, w}_{\mathbb{F}}(\Omega; H^{1}(0,T; L^{2}(G)))}^{2} 
+ \varepsilon | \mathcal{P} z |_{L^{2, w}_{\mathbb{F}}(\Omega; H^{1}(0,T; L^{2}(G)))}^{2}.
\end{align}
Combining \cref{eqConvergence3,eqConvergence4,eqConvergence5,eqConvergence2}, choosing $ \varepsilon $ sufficiently small, we conclude that \vspace{-2mm}
\begin{align}
\label{eqConvergence6}
\notag
&    |\mathcal{P} z|_{L^{2, w}_{\mathbb{F}}(\Omega; H^{1}(0,T; L^{2}(G)))}^{2}
+ \kappa |z|_{\mathcal{H}^{2}_{\lambda, 0}}^{2}
\\
& \leq
C (
| u_{n} - u^{*} |_{\mathcal{H}^{1}_{\lambda, c_{0}}}^{2}  
+ \kappa |u^{*}|_{\mathcal{H}^{2}_{\lambda, 0}}^{2}
+ |\mathcal{P} \mathcal{E} |_{L^{2, w}_{\mathbb{F}}(\Omega; H^{1}(0,T; L^{2}(G)))}^{2}
+ |\mathcal{E}|_{\mathcal{H}^{2}_{\lambda, 0}}^{2}
).\vspace{-2mm}
\end{align}

Apply  \cref{thmCarlemanEstimate} to $ z $, for $ \lambda \geq \lambda_{0} $, we have\vspace{-2mm}
\begin{equation}
\label{eqConvergence7}
|\mathcal{P} z |_{L^{2, w}_{\mathbb{F}}(\Omega; H^{1}(0,T; L^{2}(G)))}^{2}
\geq 
\mathbb{E} \int_{Q} \theta^{2} (\lambda z_{t}^{2} + \lambda |\nabla z|^{2} + \lambda^{3} z^{2} ) d x d t 
- C \mathbb{E} \int_{G} \theta^{2} (\lambda |\nabla z|^{2} + \lambda^{3} |z|^{2}) \big|_{t = T} d x.\vspace{-2mm}
\end{equation}
From \cref{eqTheta} and trace theorem of Sobolev space, we obtain that\vspace{-2mm}
\begin{align}
\label{eqConvergence8} \notag
&\mathbb{E} \int_{G} \theta^{2} (\lambda |\nabla z|^{2} + \lambda^{3} |z|^{2}) \big|_{t = T} d x 
\\ \notag
& \leq 
\lambda^{3} \mathbb{E} \int_{G} e^{2 \lambda (\psi(x) - c_{0} T^{2})}  ( |\nabla z(T)|^{2} +  |z(T)|^{2})   d x 
\\ \notag
& \leq 
C \lambda^{5} e^{-2 c_{0} \lambda T^{2}} \mathbb{E} \int_{Q} e^{2 \lambda \psi(x)}  ( |z|^{2} +  |\nabla z|^{2} +  |z_{t}|^{2} + |\nabla^{2} z|^{2} + |\nabla z_{t}|^{2})   d x d t
\\
& = 
C \lambda^{5} e^{-2 c_{0} \lambda T^{2}} |z|_{\mathcal{H}^{2}_{\lambda, 0}}^{2}
.\vspace{-2mm}
\end{align}
Combining \cref{eqConvergence6,eqConvergence7,eqConvergence8}, for $ \lambda \geq \lambda_{0} $, we arrive at\vspace{-2mm}
\begin{align*}
&\mathbb{E} \int_{Q} \theta^{2} (\lambda z_{t}^{2} + \lambda |\nabla z|^{2} + \lambda^{3} z^{2} ) d x d t
+ \kappa |z|_{\mathcal{H}^{2}_{\lambda, 0}}^{2} 
- C \lambda^{5} e^{-2 c_{0} \lambda T^{2}} |z|_{\mathcal{H}^{2}_{\lambda, 0}}^{2}
\\
& \leq 
C \big(
| u_{n} - u^{*} |_{\mathcal{H}^{1}_{\lambda, c_{0}}}^{2}  
+ \kappa |u^{*}|_{\mathcal{H}^{2}_{\lambda, 0}}^{2}
+ |\mathcal{P} \mathcal{E} |_{L^{2, w}_{\mathbb{F}}(\Omega; H^{1}(0,T; L^{2}(G)))}^{2}
+ |\mathcal{E}|_{\mathcal{H}^{2}_{\lambda, 0}}^{2}
\big).
\end{align*}

By selecting $ \lambda_{1}  $ sufficiently large such that $ C\lambda^{5}  e^{-2 c_{0} \lambda_{1} T^{2}}  \leq \frac{\kappa}{2} $ and recalling that $ z = u_{n+1} -u^{*} - \mathcal{E} $, we deduce that for all $ \lambda \geq \max\{ \lambda_{0}, \lambda_{1} \} $, 
\begin{align*}
\lambda |u_{n+1} - u^{*} |_{\mathcal{H}^{1}_{\lambda, c_{0}}}^{2} 
\leq 
C (
| u_{n} - u^{*} |_{\mathcal{H}^{1}_{\lambda, c_{0}}}^{2}  
+ \kappa |u^{*}|_{\mathcal{H}^{2}_{\lambda, 0}}^{2}
+ |\mathcal{P} \mathcal{E} |_{L^{2, w}_{\mathbb{F}}(\Omega; H^{1}(0,T; L^{2}(G)))}^{2}
+ |\mathcal{E}|_{\mathcal{H}^{2}_{\lambda, 0}}^{2}).
\end{align*}

Next, setting $ \delta < e^{-\lambda R_{1}^{2} / \gamma} $ for $ x \in G $, we obtain the bound $
e^{2 \lambda \psi(x)} \leq e^{2\lambda R_{1}^{2}} \leq \delta^{- 2 \gamma}$. 
By \cref{eqNoise}, and choosing $ \widetilde{\lambda}_{0} \geq \max\{C, \lambda_{0}, \lambda_{1}\} $, we find that for all $ \lambda \geq \widetilde{\lambda}_{0}  $ and $ \delta < e^{-\lambda R_{1}^{2}/\gamma} $, the following inequality holds:\vspace{-2mm}
\begin{align}
\label{eqConvergence9}
|u_{n+1} - u^{*} |_{\mathcal{H}^{1}_{\lambda, c_{0}}}^{2} 
\leq 
\bigg(\frac{C}{\lambda}\bigg) | u_{n} - u^{*} |_{\mathcal{H}^{1}_{\lambda, c_{0}}}^{2}  
+ C (|u^{*}|_{\mathcal{H}^{2}_{\lambda, 0}}^{2}
+  \delta^{2- 2 \gamma} 
).
\end{align}
Finally, applying \cref{eqConvergence9} recursively completes the proof.
\end{proof}

\section{Numerical experiments for \texorpdfstring{Problem~\ref{prob1}}{Problem 1}}
\label{secNumerical}

In this section, we present some numerical tests to illustrate the performance of the proposed method.  
For simplicity, let the spatial domain be $ G = (0,1) \times (0, 1.5) $, the final time $ T =1 $ and $ b^{j k} = \delta_{jk} $ for $ j,k =1, 2 $. 
Consider the following semilinear stochastic hyperbolic equation:\vspace{-2mm}
\begin{align}\label{eqSemlinearStochasticHyperbolicNum}
\left\{
\begin{aligned}
& du_{t} - \Delta u dt=F(u, u_{t}, \nabla u) dt +  a u dW(t)&\mbox{ in }Q,\\
&   u_{t}(0) = 0 &\mbox{ in }G, \\
&   u(0) = u_{0} &\mbox{ in }G
,
\end{aligned}
\right.\vspace{-2mm}
\end{align}
where $ a \in L^{\infty}_{{\mathbb{F}}}(0,T;L^{\infty}(G)) $.

We solve the forward problem by employing the finite difference method in space and the explicit Euler-Maruyama method (see \cite{Lord2014,Kloeden1992}) in time.
Afterwards, we solve the inverse \cref{prob1} by iteratively minimizing the Tikhonov functional.

Before presenting the numerical results, we first introduce the weight function $ \theta $ in \cref{eqTheta} as follows:\vspace{-4mm}
\begin{align}
\label{eqThetaDis}
\theta(t,x,y) =
e^{\lambda (\psi(x) - 0.25 t^{2})}, \quad 
\psi(x,y) = (x - 0.5)^{2} + (y + 0.5 )^{2} - 4.175.
\end{align}
The weight function is chosen to satisfy \cref{conB,conF}. Accordingly, the observation boundary $\Gamma_{0}$ derived from \cref{eqGamma0} consists of: \vspace{-2mm}
$$
\Gamma_{0} = 
\{ (1,y) \mid y \in [0, 1.5] \}
\cup \{ (x, 1.5) \mid x \in [0,1] \}
\cup \{ (0, y) | y \in [0, 1.5] \}.\vspace{-2mm}
$$
We shall employ the lateral Cauchy data  $ u|_{(0,T)\times \Gamma} $ and $ \frac{\partial u}{\partial \nu}\Big|_{\Sigma_{0}} $
to reconstruct the unknown initial source function $ u_0 $.

\begin{remark}
\label{rkLessBoundary}

It is worth noting that, unlike in the deterministic case of \cite{Nguyen2022}, solving \cref{prob1} does not require the boundary data for $ \frac{\partial u}{\partial \nu} $ on the lower boundary $\{ (x,0) \mid x \in [0, 1] \}$, which is typically more difficult to obtain.
We also employ a rectangular domain instead of a square one in \cite{Nguyen2022}, which usually poses difficulties in reconstructing the initial condition.

\end{remark}

Now we present the computational algorithm employed in our numerical experiments.  Let
$J_{n, \ell}(u; u_n)$ denote the Tikhonov functional defined in \cref{eqFunctional}, where the subscript $\ell$ indicates the dependence on sample paths $\omega^{\ell}$ through the It\^o integration term in operator $\mathcal{P}$. 
The inverse problem is solved through the following iterative optimization procedure (Algorithm \ref{algInverseProblem}), where $ \mathfrak{g}_{t}^{2} $ represents the element-wise square of $ \mathfrak{g}_{t} $: 
\begin{algorithm}
\caption{The algorithm for solving the inverse problem}
\label{algInverseProblem}
\begin{algorithmic}[1]
\STATE  Choose maximum iteration $N_{I}$ and the number of sample paths $ N_{S} $.
\FOR{ $ \ell = 1 $ to $ N_{S} $}
    \STATE  Choose the initial guess $ u_{\ell}^{0} \in \mathcal{U} $.
    \FOR{ $ n=1 $ to $ N_{I} $ }
    \STATE Choose the maximum number of update steps $N_U$, the learning rate $\alpha$, and the parameters $\beta_{1}$, $\beta_{2}$, and $\varepsilon$.
    \STATE Initialize $u_{(0)} = u_{l}^{n-1}$, $v_{0} = 0$, $m_{0} = 0$.
    \FOR{ $ t=1 $ to $ N_{U} $}
        \STATE Set $ \mathfrak{g}_{t} = \nabla_{u} J_{n,\ell}(u_{(t-1)}; u_{\ell}^{n-1}) $.
        \STATE Set $ m_{t} = \beta_{1} m_{t-1} + (1-\beta_{1}) \mathfrak{g}_{t} $  and $ v_{t} = \beta_{2} v_{t-1} + (1-\beta_{2}) \mathfrak{g}_{t}^{2} $.
        \STATE Set $ \hat{m}_{t} = m_{t} / (1 - \beta_{1}^{t}) $ and $ \hat{v}_{t} = v_{t} / (1 - \beta_{2}^{t}) $.
        \STATE Set $ u_{(t)} = u_{(t-1)} - \alpha \hat{m}_{t} / (\sqrt{\hat{v}_{t}} + \epsilon) $.
    \ENDFOR
    \STATE Set $u_{\ell}^{n} = u_{(N_{U})}$ as the minimizer of the functional $J_{n,\ell}(u; u_{\ell}^{n-1})$.
\ENDFOR
\ENDFOR
\STATE Set the compute solution $ u_{c} = \frac{1}{N_{S}} \sum\limits_{\ell=1}^{N_{S}} u_{\ell}^{N_{I}} $.
\STATE \textbf{Output:} The reconstructed initial source function $u_{0} = u_{c}(0, x, y)$ for $ (x,y) \in G $.
\end{algorithmic}
\end{algorithm}

The core challenge in \cref{algInverseProblem} lies in minimizing the functional $ J_{n,\ell}(u; u_{\ell}^{n-1}) $, which is carried out in steps 5--13. This minimization is complicated since even when $\ell$ is fixed, the lateral Cauchy data 
$u\big|_{(0,T)\times\Gamma}$ and 
$\frac{\partial u}{\partial \nu}\big|_{(0,T)\times\Gamma_0}$ 
are stochastic processes corresponding to the solution of the stochastic hyperbolic equation. 
Consequently, if one employs the conjugate gradient method (CGM for short), which is commonly used in deterministic settings to compute the gradient (e.g., \cite{Klibanov2016}), the resulting gradient will inherit this randomness, potentially resulting in the proper convergence (see \cref{fig:compareSGD_Adam}).
Moreover, the inherent randomness makes it difficult to reformulate the problem into solving a linear equation subject to certain boundary and initial constraints, as is done using the quasi-reversibility method in \cite{Nguyen2022,Nguyen2019}. 
Therefore, we will use the Adam optimizer \cite{Kingma2014} to minimize the functional $ J_{n,\ell}(u; u_{\ell}^{n-1}) $.  

The Adam algorithm is specifically designed to address stochastic optimization problems. Its key innovation lies in the incorporation of a momentum mechanism: 
\begin{align*}
m_{t} = \beta_{1} m_{t-1} + (1-\beta_{1}) \mathfrak{g}_{t}.
\end{align*}
This approach offers two significant benefits:

1. Noise Reduction: By computing an exponentially weighted average of past gradients, the algorithm effectively smooths out stochastic fluctuations in the gradient estimates.

2. Stable Convergence: The momentum term helps maintain consistent update directions, leading to more reliable convergence behavior compared to standard stochastic gradient methods.

\begin{figure}
\centering
\includegraphics[width=0.8\textwidth]{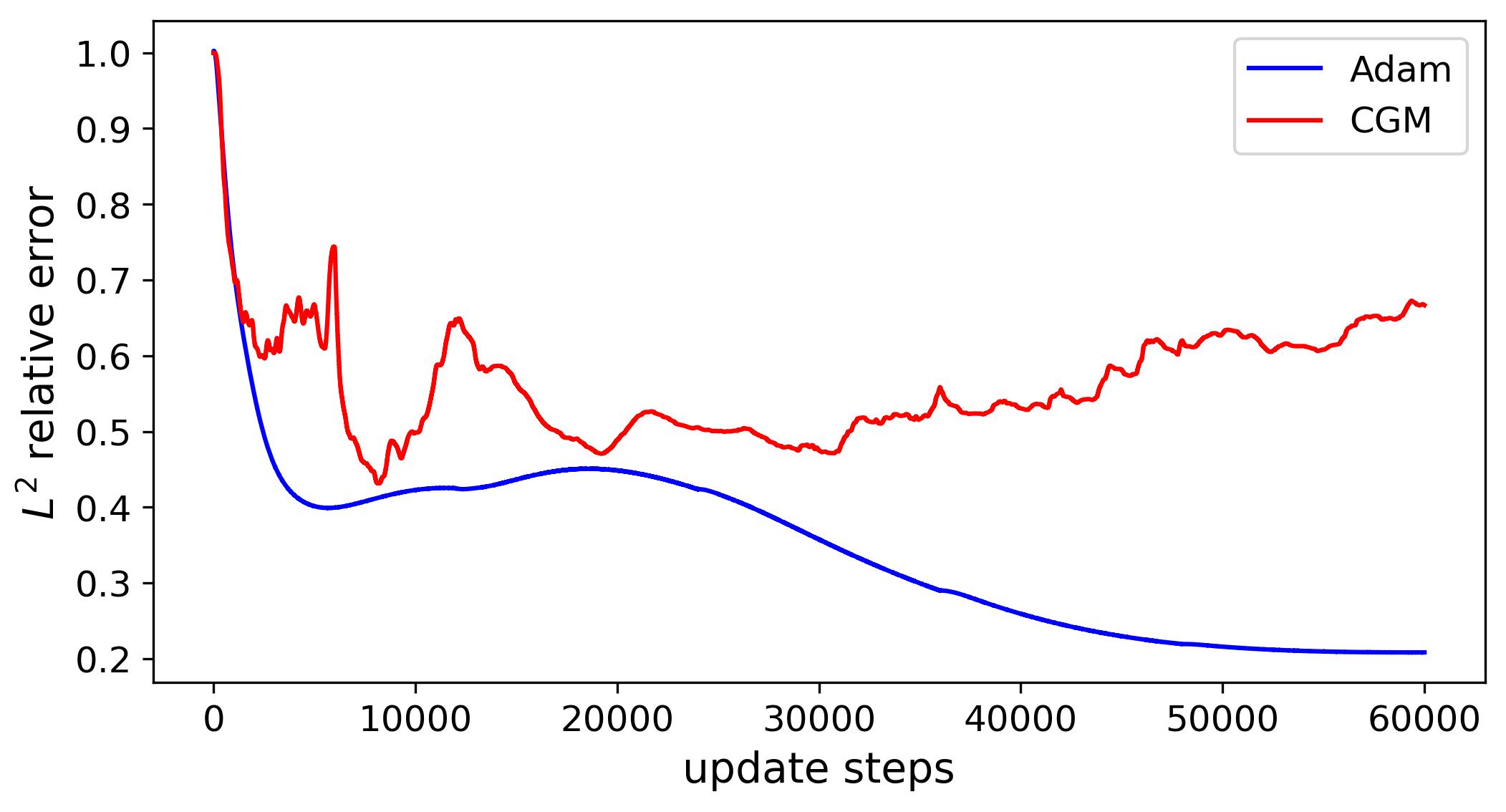}
\caption{We adopt the setting of \cref{ex4} with $N_{S}=1$. The figure compares the performance of the Adam optimizer and the CGM in solving the inverse problem, demonstrating that Adam achieves faster convergence and more stable optimization when handling stochastic gradients and noisy data.}
\label{fig:compareSGD_Adam}
\end{figure}

On the other hand, our computations reveal a significant disparity in the magnitudes of different gradient components, due to the exponential form of the weight function $\theta$ defined in \cref{eqThetaDis}. 
This issue reduces the convergence rate of the CGM (see \cref{fig:compareSGD_Adam}).
However, Adam mitigates the problem by adjusting the learning rate adaptively based on the historical gradient information for each parameter.
More precisely, the learning rate is given by
\begin{align*}
\gamma_{t} = \frac{\gamma}{\sqrt{\hat{v}_{t}} + \varepsilon},
\end{align*}
where $ v_{t} = \beta_{2} v_{t-1} + (1-\beta_{2}) \mathfrak{g}_{t}^{2}$ and $\hat{v}_{t} = v_{t} / (1 - \beta_{2}^{t}) $.

The next challenging task is to compute the gradient 
$ \nabla_{u} J_{n,\ell}(u_{(t-1)}; u_{\ell}^{n-1}) $. 
For stochastic problems, using the adjoint method typically requires solving an adjoint system, that is, a backward stochastic differential equation (see, e.g., \cite{Dou2022}).
Numerically, the evaluation of the correction term in a backward stochastic differential equation is very difficult because, to date, there is no effective method to compute the conditional expectation of a general random variable \cite{Lue2022}. 
To overcome this issue, we employ the technique of automatic differentiation to compute the gradient.

One of the fundamental reasons behind the significant advances in machine learning is the underlying computational infrastructure centered on automatic differentiation. The basic idea of automatic differentiation is to decompose a function into a series of simple operations and then use the chain rule to compute derivatives. 
Automatic differentiation is an efficient method for evaluating the gradient of the complicated functional, with a computational cost independent of the number of parameters. 
Modern open-source machine learning libraries such as PyTorch \cite{Paszke2017} have already incorporated this method, and it has been successfully applied in areas such as computational fluid dynamics \cite{Alhashim2025}. 

In this paper, the functional $J_{n,\ell}(u; u_{\ell}^{n-1})$ has intrinsic randomness (see \cref{rkDiscreteFunctional}). Furthermore, its parameter dimension is extremely high (approximately 80,000 parameters in our numerical examples, see \cref{eqTensorDis}). Hence, automatic differentiation is employed to efficiently compute the gradient of $J_{n,\ell}(u; u_{\ell}^{n-1})$ for each sample path. This approach avoids the need for an explicit derivative calculation or the solution of a backward stochastic partial differential equation. 
Our subsequent numerical experiments demonstrate the efficiency and accuracy of this approach.

\subsection{Generating the simulated data}

Introduce the uniform mesh in $ (0,T) \times G  $ as 
\begin{align*}
\mathfrak{M} = \{ (t_{k}, x_{i}, y_{j}) \mid 
t_{k} = k \tau, \; x_{i} = i h_{x}, \; y_{j} = j h_{y}, \;
k \in \llbracket 0, M \rrbracket, \; i \in \llbracket 0, N_{x} \rrbracket ,\;
j \in \llbracket 0, N_{y} \rrbracket
\},
\end{align*}
where $ h_{x} = 1 / N_{x} $, $ h_{y} = 3/ (2 N_{y}) $ and $ \tau = 1 / M $.
Here, we use the notation $ \llbracket a, b \rrbracket = [a,b] \cap \mathbb{N} $.
We choose $ M = 65 $, $ N_{x}=32 $ and $ N_{y} = 48 $. 
These values are chosen such that Courant--Friedrichs--Lewy condition $ \tau ( 1/ h_{x}^{2} + 1/ h_{y}^{2} )^{1/2} \simeq 0.862 <1 $ is satisfied.
Thus, the computed solution obtained by our scheme is stable.

Let $ u_{k,i,j} = u(t_{k}, x_{i}, y_{j}) $ for $ k \in \llbracket 0, M \rrbracket, \; i \in \llbracket 0, N_{x} \rrbracket ,\;
j \in \llbracket 0, N_{y} \rrbracket $.
The equation \cref{eqSemlinearStochasticHyperbolicNum} is solved by centred difference
approximation for the Laplacian operator and explicit Euler-Maruyama method for the time.
For $ k \in \llbracket 1, M-1 \rrbracket, \; 
i \in \llbracket 1, N_{x}-1 \rrbracket ,\;
j \in \llbracket 1, N_{y}-1 \rrbracket  $, we solve the following equation:
\begin{align}
\label{eqDisFor} 
[d u_{t}]_{k,i,j}
& =
[u_{xx}]_{k,i,j} \tau
\!+\!  [u_{yy}]_{k,i,j} \tau
\!+\! F(t_{k}, x_{i}, y_{j}, u_{k, i, j}, [u_{t}]_{k, i, j}, [\nabla u]_{k, i, j}) \tau 
\!+\!a(t_{k}, x_{i}, y_{j}) u_{k, i, j}  \Delta W_{k}
,
\end{align}
where the increments $ \Delta W_{k} = W_{k+1} - W_{k} \sim N(0, \sqrt{\tau}) $, and
\begin{align}
\label{equtDis} \notag
[d u_{t}]_{k,i,j} & = \frac{u_{k+1,i,j} - 2 u_{k,i,j} + u_{k-1,i,j}}{\tau } ,
&&[u_{xx}]_{k,i,j}   = \frac{u_{k, i+1, j} - 2 u_{k, i, j} + u_{k, i-1, j}}{h_{x}^{2}}
\\ \notag
[u_{yy}]_{k,i,j} & = \frac{u_{k, i, j+1} - 2 u_{k, i, j} + u_{k, i, j-1}}{h_{y}^{2}}.
&&[\nabla u]_{k, i, j}  = \left( \frac{u_{k, i+1, j} - u_{k, i, j}}{ h_{x}}, \frac{u_{k, i, j+1} - u_{k, i, j}}{ h_{y}} \right)^{T},
\\
[u_{t}]_{k, i, j} &= \frac{u_{k+1, i, j} - u_{k,i,j}}{ \tau}. 
\end{align}
The initial value condition $ u(0) = u_{0} $ and $ u_{t}(0) = 0 $ is given by 
\begin{align}
\label{eqInitCondi}
u_{0,i,j} = u_{0}(x_{i}, y_{j}), \quad 
u_{1,i,j} = u_{0,i,j}, \quad 
i \in \llbracket 0, N_{x} \rrbracket ,\;
j \in \llbracket 0, N_{y} \rrbracket.
\end{align}
The boundary condition $ u|_{(0,T) \times \Gamma} = f^{*} $ is given by
\begin{align}
\label{eqBoundaryCon1}
u_{k, i, j} = \left\{
\begin{aligned}
f^{*} (t_{k}, 1, y_{j}), \quad & k \in \llbracket 0, M  \rrbracket , \; i = N_{x}, \; j \in \llbracket 0, N_{y} \rrbracket,\\
f^{*} (t_{k}, 0, y_{j}), \quad & k \in \llbracket 0, M  \rrbracket , \; i = 0, \; j \in \llbracket 0, N_{y} \rrbracket,\\
f^{*} (t_{k}, x_{i}, 0), \quad & k \in \llbracket 0, M  \rrbracket , \; i \in \llbracket 0, N_{x} \rrbracket ,\; j = 0,\\
f^{*} (t_{k}, x_{i}, 3/2), \quad & k \in \llbracket 0, M  \rrbracket , \; i \in \llbracket 0, N_{x} \rrbracket ,\; j = N_{y}.
\end{aligned}
\right.
\end{align}
After solving \cref{eqDisFor}, the boundary data for $\frac{\partial u}{\partial \nu} = g^{*}$ on $ (0,T) \times \Gamma_{0} $ is computed using 
\begin{align}
\label{eqBoundaryCon2}
g^{*} (t_{k}, x_{i}, y_{j}) = \left\{
\begin{aligned}
\frac{u_{k, i, j} - u_{k, i-1, j}}{ h_{x}}, \quad & k \in \llbracket 0, M  \rrbracket , \; i = N_{x}, \; j \in \llbracket 0, N_{y} \rrbracket,\\
\frac{u_{k, i, j} - u_{k, i+1, j}}{ h_{x}}, \quad & k \in \llbracket 0, M  \rrbracket , \; i = 0, \; j \in \llbracket 0, N_{y} \rrbracket,\\
\frac{u_{k, i, j} - u_{k, i, j-1}}{ h_{y}}, \quad & k \in \llbracket 0, M  \rrbracket , \; i \in \llbracket 0, N_{x} \rrbracket ,\; j = N_{y}
.
\end{aligned}
\right.
\end{align}
Recalling \cref{rkLessBoundary}, we do not need to compute the boundary data for $ \frac{\partial u}{\partial \nu} $ on the lower boundary $\{ (x,0) \mid x \in [0, 1] \}$.

\begin{remark}
\label{rkForwardBoundary}
Note that $f^{*}$ and $g^{*}$ actually depend on the sample point $\omega$. In the sequel, a total of $8$ sample paths for the forward equations were generated for each test.
\end{remark}

\subsection{Specifying the Tikhonov functional}

To solve the inverse problem, we employ the Tikhonov regularization approach using the functional defined in \cref{eqFunctional}. The numerical implementation requires the following key components:

1. Discretization Scheme:
The optimization variable is discretized as a three-dimensional tensor:
\begin{align}
\label{eqTensor}
(u_{k,i,j})_{k\in \llbracket 0,M\rrbracket,; i\in \llbracket 0,N_x\rrbracket,; j\in \llbracket 0,N_y\rrbracket},
\end{align}
representing the solution across temporal and spatial dimensions.

2. Noise Model:
We introduce 10\% multiplicative noise to the lateral Cauchy data:
\begin{align}
\label{eqNoisyData} \notag
f(t,x,y) &= f^{*}(t,x,y)(1 + \delta\xi(t,x,y)), \\
g(t,x,y) &= g^{*}(t,x,y)(1 + \delta\xi(t,x,y)),
\end{align}
where $\xi(t,x,y)$  is a random variable uniformly distributed in $[-1,1]$, which models uniformly distributed measurement noise. 

3. Discrete Feasible Set:
The constrained solution space is defined as:
\begin{align*}
\mathcal{U}_{D} =\{
(u_{k,i,j})_{k \in \llbracket 0, M \rrbracket,\; i \in \llbracket 0, N_{x} \rrbracket ,\; j \in \llbracket 0, N_{y} \rrbracket}
\mid 
(u_{k,i,j})  \text{ fulfills \cref{eqInitCondi,eqBoundaryCon1,eqBoundaryCon2}  with $ f^{*}, g^{*} $ replaced by $ f, g $}
\}.
\end{align*}

\begin{remark}
Note that $f^{*}$ and $g^{*}$ depend on the sample point $\omega$ of the forward equation \cref{eqDisFor}. In step 8 of Algorithm \ref{algInverseProblem}, inspired by stochastic gradient descent \cite{Robbins1951}, we compute the gradient corresponding to the Cauchy data $f^{*}$ and $g^{*}$ for each sample $\omega$ of \cref{eqDisFor} in turn. For brevity, we do not distinguish between the $f^{*}$ and $g^{*}$ generated by different sample paths here.
\end{remark}

To compute the Tikhonov functional \cref{eqFunctional}, we first discretize the operator $\mathcal{P}$ appearing in \cref{eqFunctional}. Observe that if $\varphi$ satisfies
\begin{align*}
d\varphi_{t} - \Delta \varphi  dt = \Phi  dt + a\varphi  dW(t),
\end{align*}
then it follows that
\begin{align*}
& |\mathcal{P}\varphi - \mathbf{I}\mathcal{F}(u_{n})|^{2}_{L^{2,w}_{\mathbb{F}}(\Omega;H^{1}(0,T;L^{2}(G)))} 
\\
& = \mathbb{E}\int_{0}^{T}\int_{G}\theta^{2} \Big[\int_{0}^{t}\big(\Phi-\mathcal{F}(u_{n})\big)ds\Big]^{2}dxdt 
+ \mathbb{E}\int_{0}^{T}\int_{G}\theta^{2}|\Phi-\mathcal{F}(u_{n})|^{2}dxdt.
\end{align*}
Motivated by this observation, we define a new Tikhonov functional
\begin{align}
\label{eqNewFunctional}
\widehat{J}_{n}(\varphi;u_{n})
= \mathbb{E}\int_{0}^{T}\int_{G}\theta^{2}|\Phi-\mathcal{F}(u_{n})|^{2}dxdt  
+ \kappa |\varphi|_{\mathcal{H}^{2}_{\lambda,0}}^{2}.
\end{align}
By the arguments in the proof of \cref{thmConverge}, the minimizer of $\widehat{J}_{n}(\varphi;u_{n})$ also satisfies the convergence result in \cref{thmConverge}. 
Hence, in the sequel, we consider the discrete form of \cref{eqNewFunctional} for $ u = (u_{k,i,j}) \in \mathcal{U}_{D} $:
\begin{align}
\label{eqDiscreteFunctional} \notag
J_{n,\ell} (u; u_{\ell}^{n-1}) 
&=
\sum_{k=1}^{M-1} \sum_{i=1}^{N_{x}-1} \sum_{j=1}^{N_{y} -1} 
\Big\{ \theta^{2}_{k,i,j} \big(\Phi_{k,i,j} - [\mathcal{F}(u_{\ell}^{n-1})]_{k,i,j}\big)^{2} \tau h_{x} h_{y}
\\ \notag
& \quad  \quad \quad \quad \quad \quad \quad 
+ [\theta_{0}]^{2}_{k,i,j} \big(
u_{k,i,j}^{2} + [u_{t}]_{k,i,j}^{2} + [\nabla u]^{2}_{k,i,j} + [u_{xx}]^{2}_{k,i,j}
+ [u_{yy}]^{2}_{k,i,j}
\\
& \quad \quad \quad \quad \quad \quad \quad \quad \quad \quad \quad 
+[u_{xy}]^{2}_{k,i,j}
+ [u_{tx}]^{2}_{k,i,j} 
+ [u_{ty}]^{2}_{k,i,j} 
\big)
\Big\}
\tau h_{x} h_{y}
,
\end{align}
where 
\begin{align*}
&
\theta_{k,i,j} = e^{  \lambda (\psi(x_{i}, y_{j})- c_{0} t_{k}^{2})},  \quad \quad 
[\theta_{0} ]_{k,i,j} = e^{  \lambda \psi(x_{i}, y_{j})} ,
\\
&
\Phi_{k,i,j} = \{[du_{t}]_{k,i,j} - ([u_{xx}]_{k,i,j} + [u_{yy}]_{k,i,j}) \tau - a_{k,i,j} u_{k,i,j} \Delta W_{k}   \} /\tau,
\\
&
[\mathcal{F}(u_{\ell}^{n-1})]_{k,i,j} = F(t_{k}, x_{i}, y_{j}, [u_{\ell}^{n-1}]_{k, i, j}, [\partial_{t}u_{\ell}^{n-1}]_{k, i, j}, [\nabla u_{\ell}^{n-1}]_{k, i, j}),
\\
&
[u_{xy}]_{k,i,j} = \frac{u_{k, i+1, j+1} - u_{k, i+1, j} - u_{k, i, j+1} + u_{k, i, j}}{ h_{x} h_{y}},
\\
&
[u_{tx}]_{k,i,j} = \frac{u_{k+1, i+1, j} - u_{k+1, i, j} - u_{k, i+1, j} + u_{k, i, j}}{ h_{x} \tau},
\\
&
[u_{ty}]_{k,i,j} = \frac{u_{k+1, i, j+1} - u_{k+1, i, j} - u_{k, i, j+1} + u_{k, i, j}}{ h_{y} \tau}.
\end{align*}
Other notations are defined in \cref{equtDis}.

To keep the initial value condition and boundary value conditions throughout all update steps, we minimize the functional \cref{eqDiscreteFunctional} with respect to the tensor
\begin{align}
\label{eqTensorDis}
(u_{k,i,j})_{k\in \llbracket 1,\,M \rrbracket,\; i\in \llbracket 2,\,N_x-2 \rrbracket,\; j\in \llbracket 2,\, N_y-2 \rrbracket}.
\end{align}
We employ automatic differentiation techniques \cite{Paszke2017} to directly compute the gradient of the discrete functional \cref{eqDiscreteFunctional} with respect to the tensor \cref{eqTensorDis}. 
For $N_{U} = 12000$, the minimization of \cref{eqDiscreteFunctional} is completed in only 15 seconds on an Apple M3 Pro.

\begin{figure}
\centering
\includegraphics[width=0.8\textwidth]{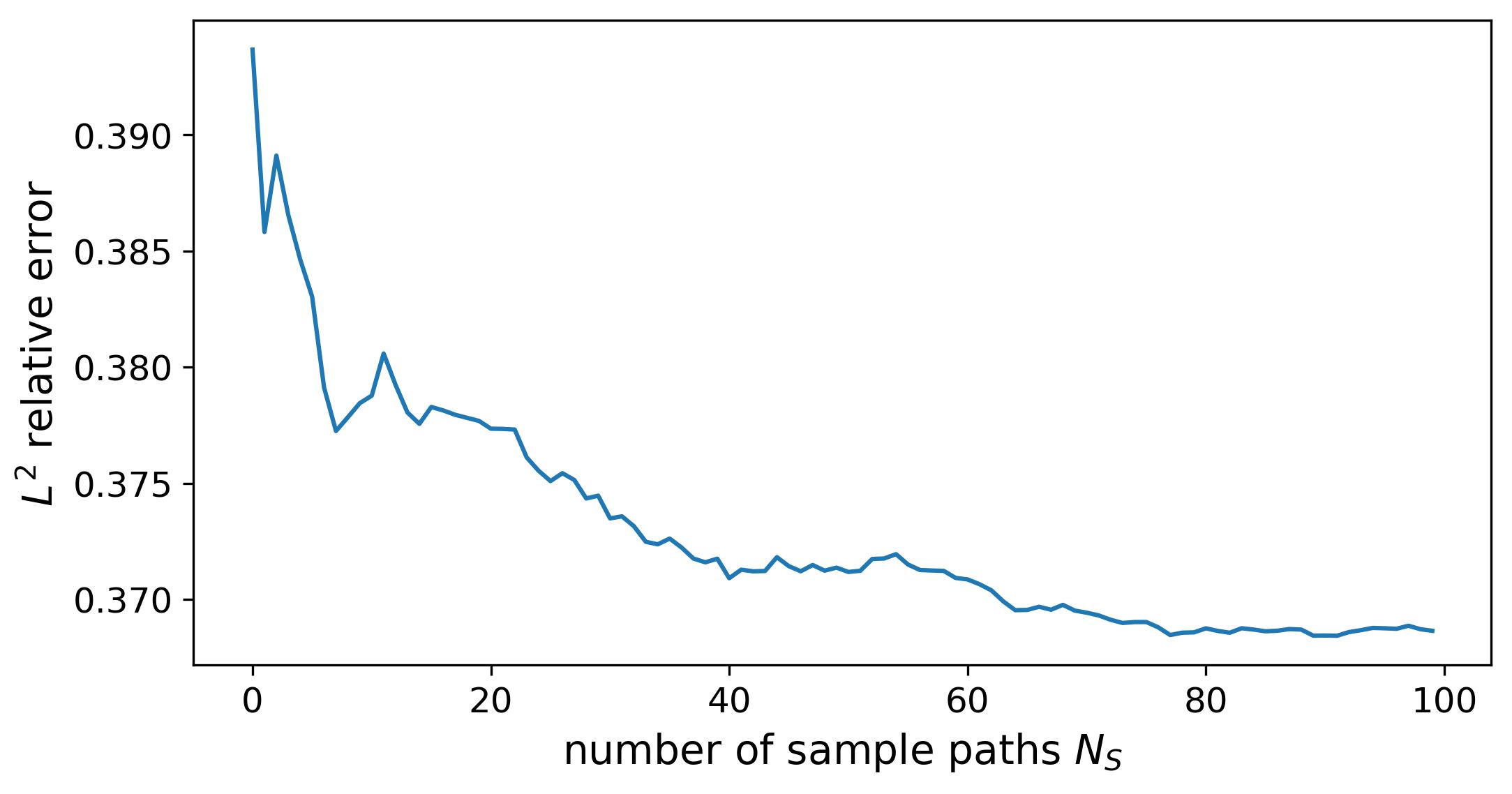}
\caption{We adopt the setting of \cref{ex1} with $N_{U}=3000$, $N_{I}=3$, and $N_{S}=100$. The figure shows how the $L^2$ relative error 
$\frac{|u_{0} - u_{0}^{*}|_{L^{2}(G)}}{|u_{0}^{*}|_{L^{2}(G)}}$ decreases with the number of sample paths $N_{S}$. In subsequent numerical tests, we set $N_{S}=30$.}
\label{fig:L2_per_samples}
\end{figure}

\begin{remark}
\label{rkDiscreteFunctional}
Note that when computing the discrete functional \cref{eqDiscreteFunctional}, one must sample the Wiener increments $\Delta W_{k}$, which inherently introduces randomness into the functional.
In \cref{algInverseProblem}, we perform $N_{S}=30$ samples (see \cref{fig:L2_per_samples}).
\end{remark}

\subsection{Numerical examples}

For all numerical experiments presented in this work, we employ the following set of optimized hyperparameter:
$ \lambda = 0.2 $, $ c_{0} = 0.25 $,  $ \kappa = 10^{-4}$, $ N_{S} = 30 $, $ N_{I} = 5 $, $ N_{U} = 12000 $, $ \alpha = 0.01 $, $ \beta_{1} = 0.9 $, $ \beta_{2} = 0.999 $ and $ \varepsilon = 10^{-8} $.

For all numerical experiments, we initialize the algorithm with a trivial initial guess $u_{\ell}^{0} \equiv 0$. 
Our method does not require a specially constructed initial guess that satisfies the boundary conditions. This is in contrast to the approach in \cite{Nguyen2022}, where the initial guess is obtained by solving a regularization problem. Nevertheless, our method converges to the true solution even when starting from zero.

We use the $ L^{2} $ relative error \vspace{-3mm}
\begin{align*}
\frac{\mathbb{E}|u_{0}-u_{0}^{*}|_{L^{2}(G)}}{\mathbb{E}|u_{0}^{*}|_{L^{2}(G)}} 
\end{align*}
to measure the accuracy of the computed solution, where $u_{0}^{*}$ is the true source function and $u_{0}$ is the computed source function.

\begin{figure}[!htb]
\centering

\begin{subfigure}[t]{0.3\textwidth}
\centering
\includegraphics[width=\linewidth]{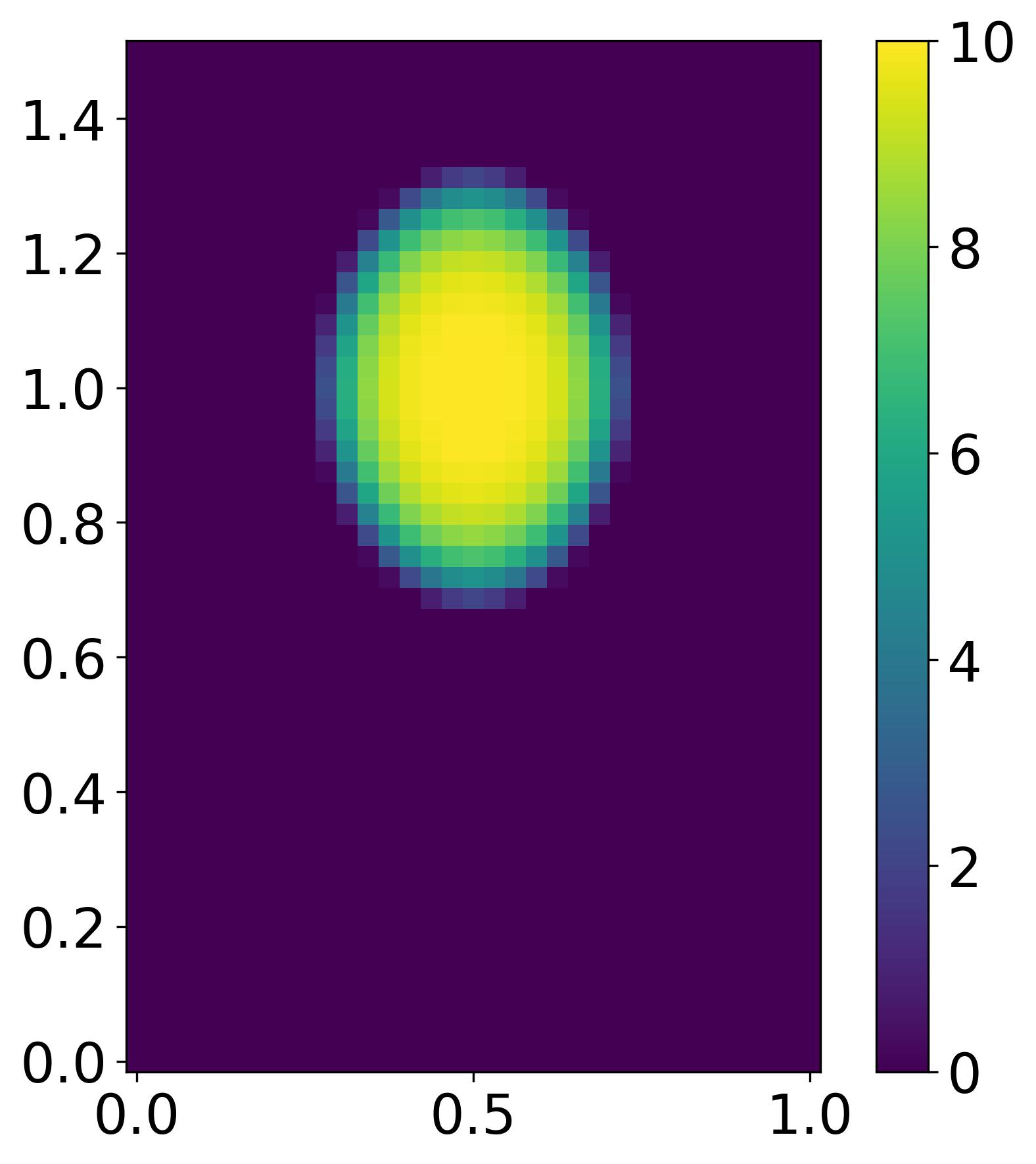}
\caption{The true source function $u_{0}^{*}$.}
\label{fig.exp1a}
\end{subfigure}%
\hfill
\begin{subfigure}[t]{0.3\textwidth}
\centering
\includegraphics[width=\linewidth]{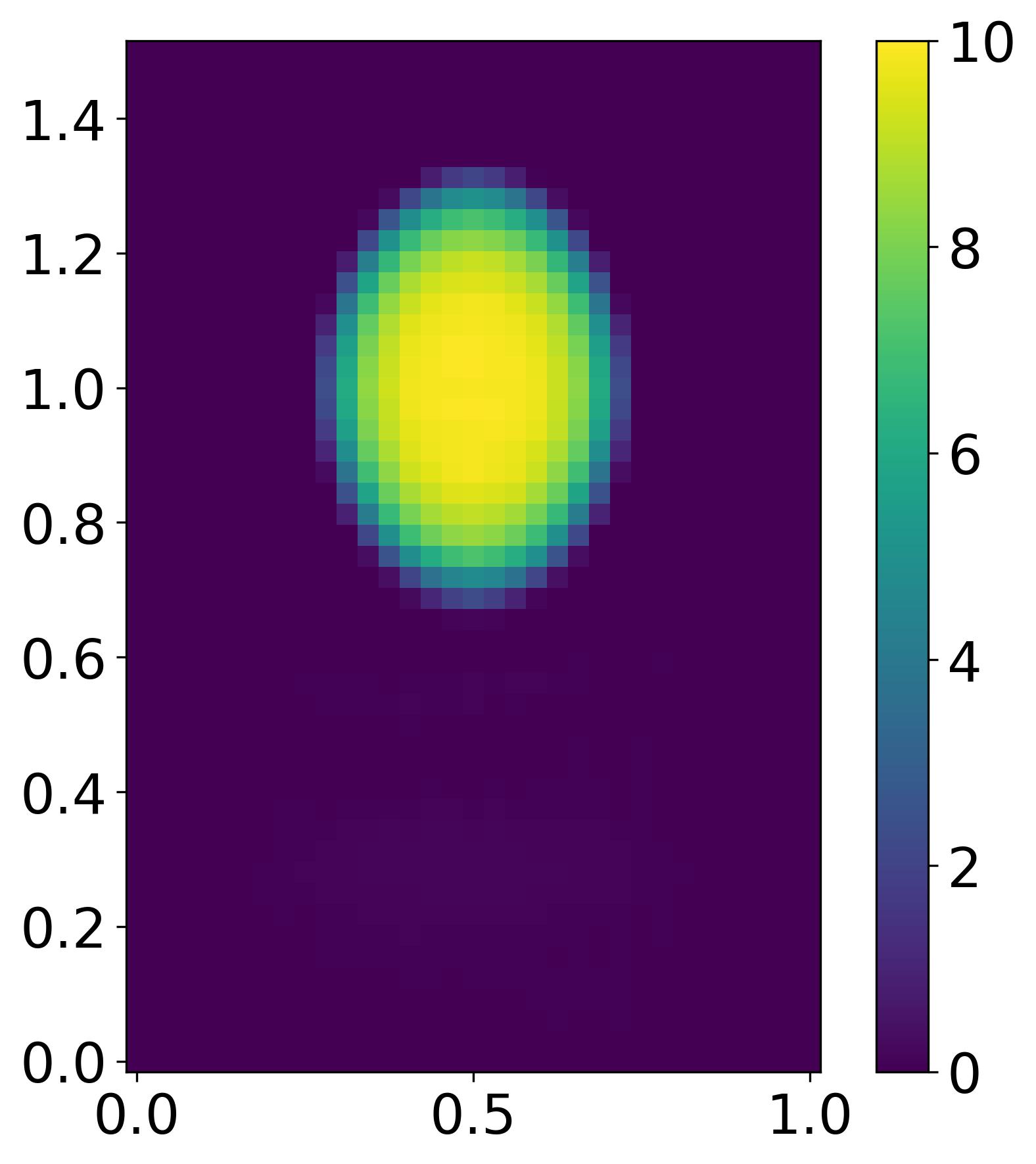}
\caption{The computed source function $u_{0}$.}
\label{fig.exp1b}
\end{subfigure}%
\hfill
\begin{subfigure}[t]{0.3\textwidth}
\centering
\includegraphics[width=1.03 \linewidth]{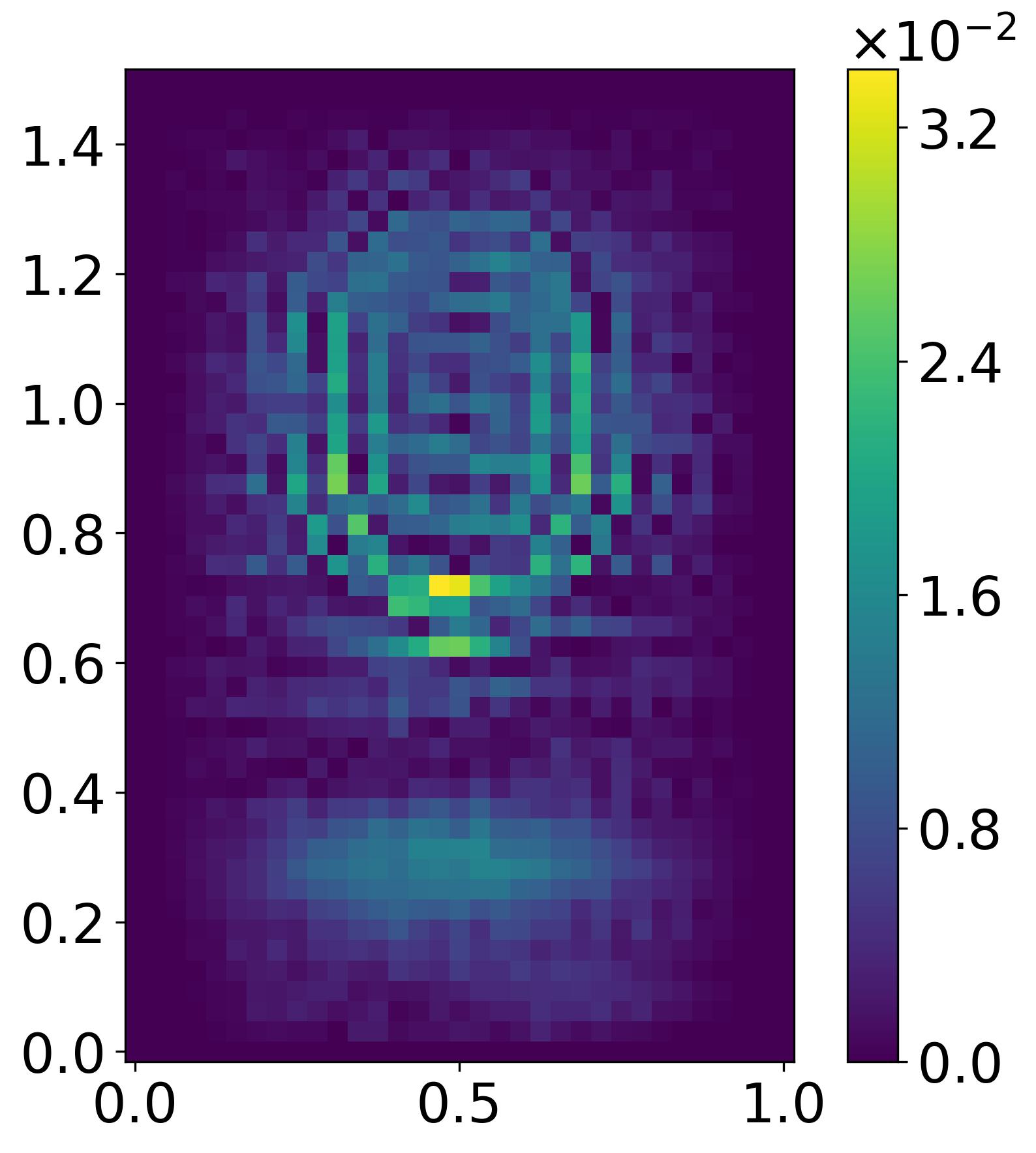}
\caption{The relative difference $ \frac{|u_{0} - u_{0}^{*}|}{|u_{0}^{*}|_{L^{\infty}(G)}} $.}
\label{fig.exp1c}
\end{subfigure}

\vspace{0.5cm} 

\begin{subfigure}[t]{0.47\textwidth}
\centering
\includegraphics[width=\linewidth]{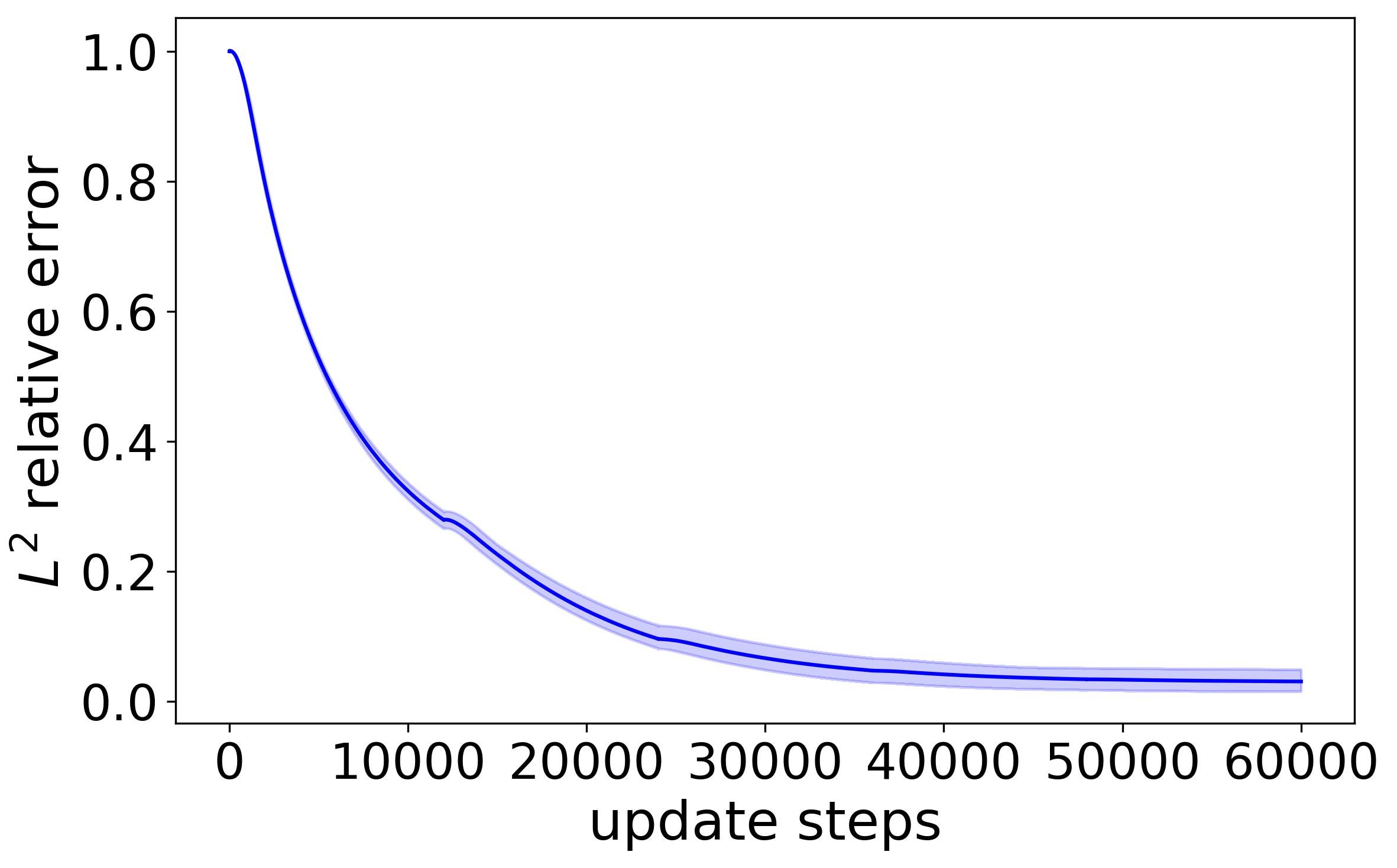}
\caption{The $ L^{2} $ relative error $ \frac{\mathbb{E}|u_{0} -u_{0}^{*}|_{L^{2}(G)}}{\mathbb{E} |u_{0}^{*}|_{L^{2}(G)}} $ over update steps}
\label{fig.exp1d}
\end{subfigure}%
\hfill
\begin{subfigure}[t]{0.47\textwidth}
\centering
\includegraphics[width=\linewidth]{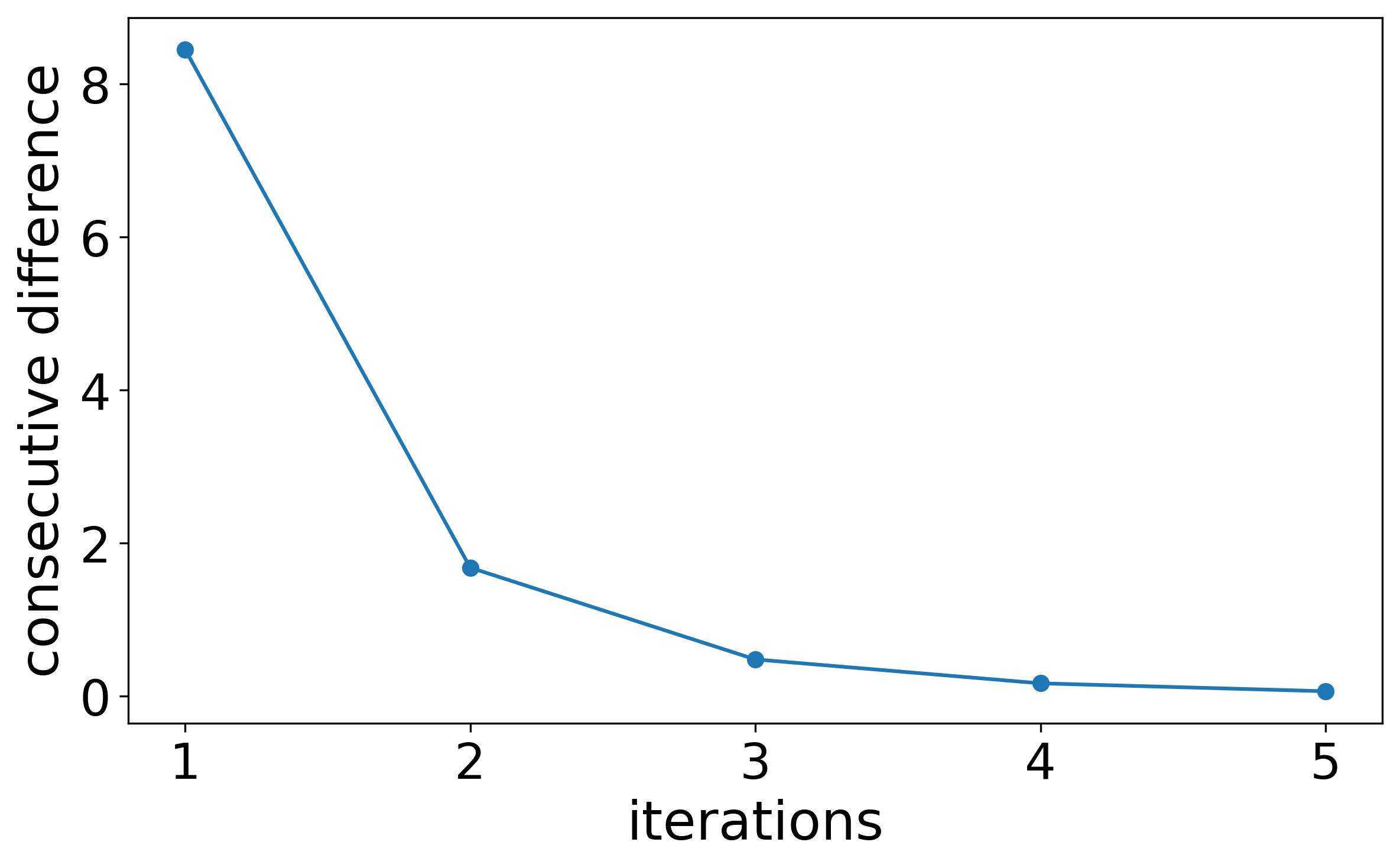}
\caption{The consecutive difference $ \mathbb{E} |u_{\ell}^{n+1}(0)-u_{\ell}^{n}(0)|_{L^{\infty}(G)} $}
\label{fig.exp1e}
\end{subfigure}

\caption{ The numerical results for \cref{ex1}. The computed source function $u_{0}$ is very close to the true source function $u_{0}^{*}$. The relative difference is small, and the $L^{2}$ relative error is 2.1\%. }
\label{fig.exp1}
\end{figure}

\begin{example}
\label{ex1}
Let $ F(u, u_{t}, \nabla u) = \sqrt{1 + u^{2}} + |\nabla u| $, $ a = 10 x y t^{2} $
and the true source function 
\begin{align*}
u^{*}_{0} = \left\{
\begin{aligned}
& 10 e^{\frac{r(x,y)}{r(x,y)-1}}, \quad & r(x,y) \leq 1, \\
& 0, \quad & r(x,y) > 1,
\end{aligned}
\right.
\end{align*}
where \vspace{-3mm}
$$
r(x,y) = 16 \bigg[
\bigg( x-\frac{1}{2} \bigg)^{2} + \frac{1}{2} (y-1)^{2}
\bigg].
$$
The boundary conditions for the forward equation are given by $ f^{*} = u^{*}_{0} \big|_{\Gamma} $.
The lateral Cauchy data, namely $f$ and $g$, are obtained from \cref{eqNoisyData,eqBoundaryCon1,eqBoundaryCon2}.
The numerical result is presented in \cref{fig.exp1}.
\end{example}

From \cref{fig.exp1a,fig.exp1b}, it is evident that the source function $u_{0}^{*}$ is very well recovered, and from \cref{fig.exp1c} one can observe that the relatively large differences occur along the boundary of the initial condition. 
The final $L^{2}$ relative error  is 2.1\%. 
From \cref{fig.exp1d}, it can be seen that the $L^{2}$ relative error remains largely unchanged for different sample paths.
Moreover, from \cref{fig.exp1e} it is seen that, with increasing iterations, the consecutive difference decreases at an exponential rate, which confirms the conclusion of \cref{thmConverge}.

\begin{figure}[!htb]
\centering

\begin{subfigure}[t]{0.3\textwidth}
\centering
\includegraphics[width=\linewidth]{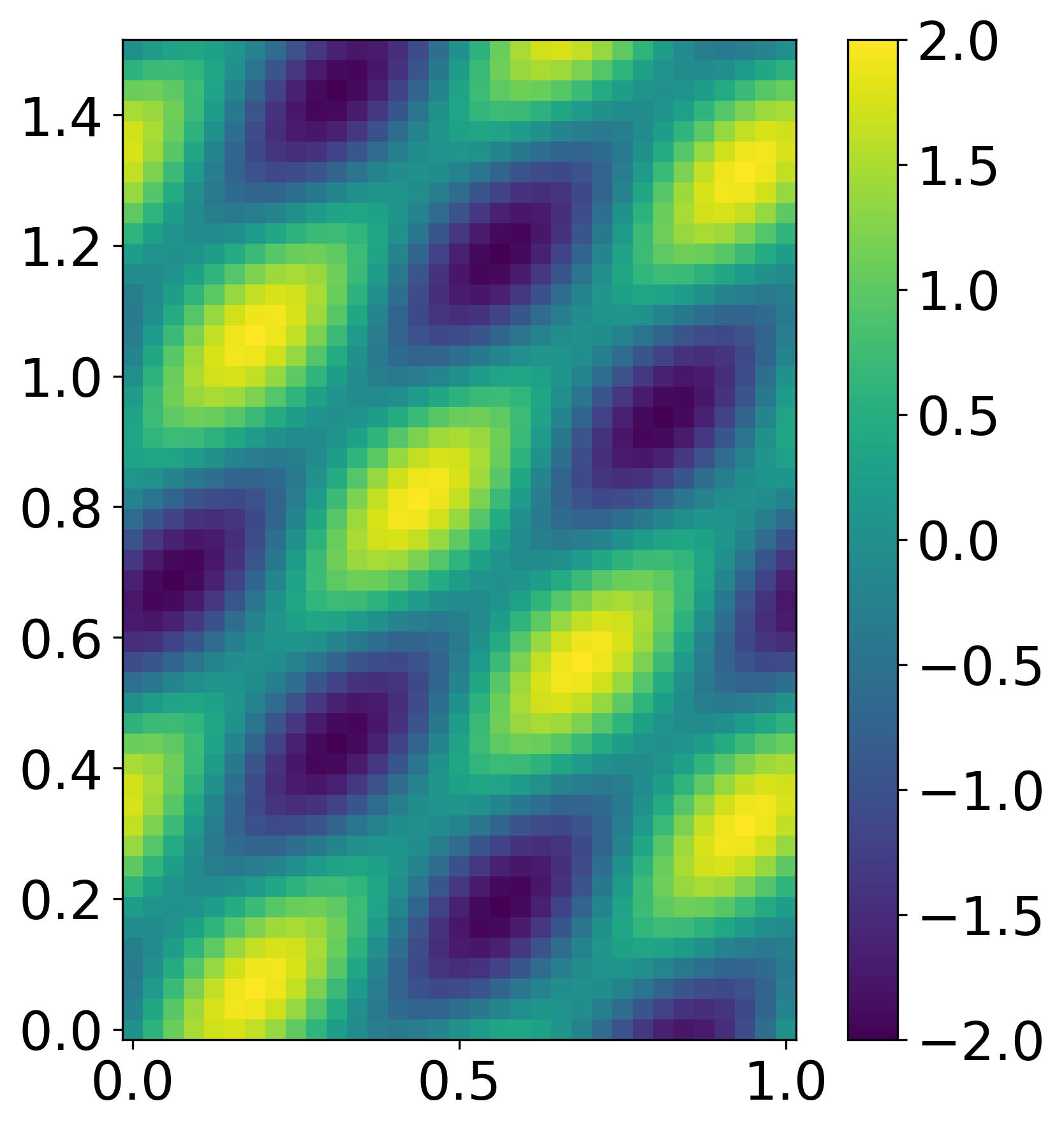}
\caption{The true source function $u_{0}^{*}$.}
\label{fig.exp2a}
\end{subfigure}%
\hfill
\begin{subfigure}[t]{0.3\textwidth}
\centering
\includegraphics[width=\linewidth]{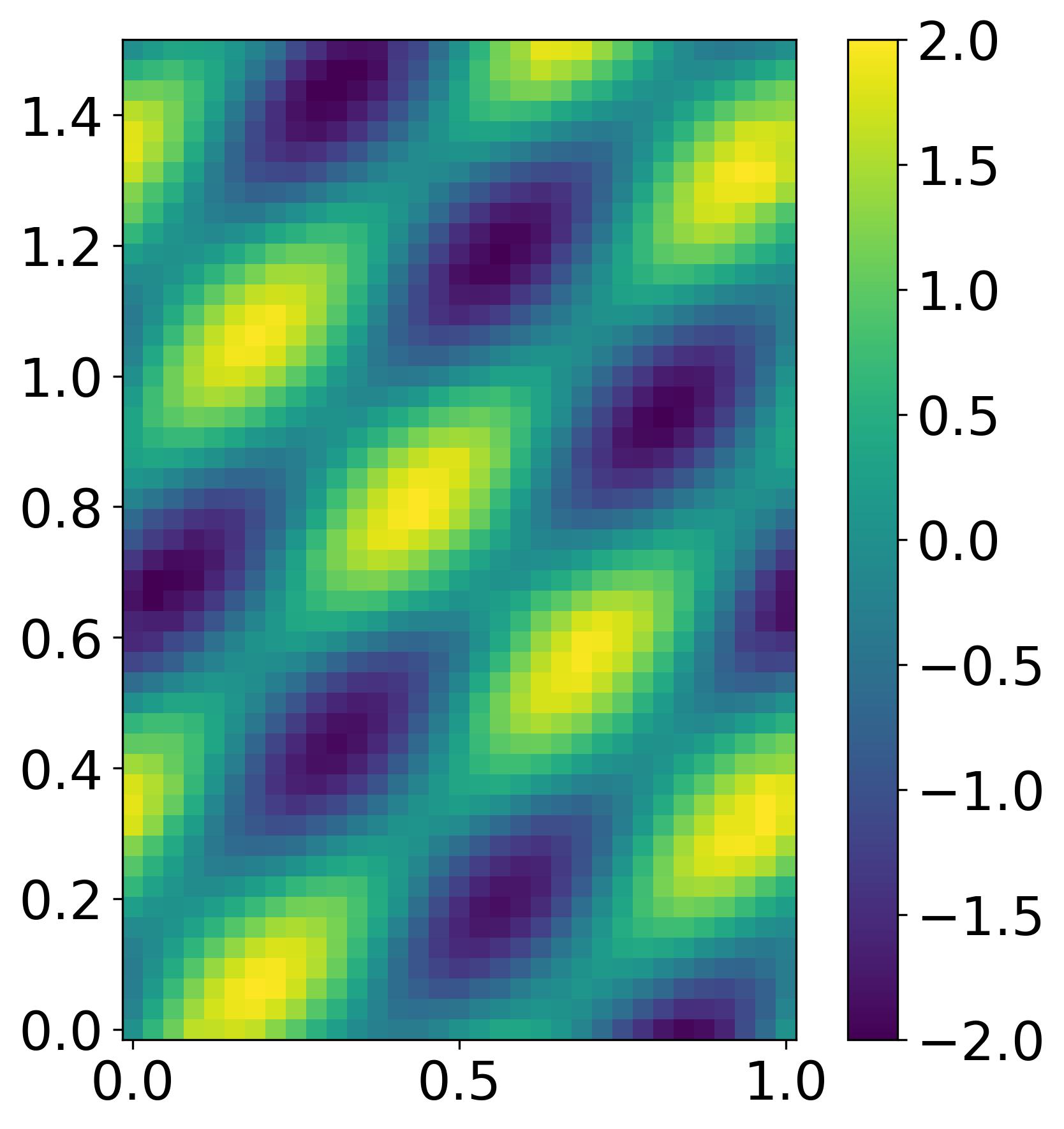}
\caption{The computed source function $u_{0}$.}
\label{fig.exp2b}
\end{subfigure}%
\hfill
\begin{subfigure}[t]{0.3\textwidth}
\centering
\includegraphics[width=1.03 \linewidth]{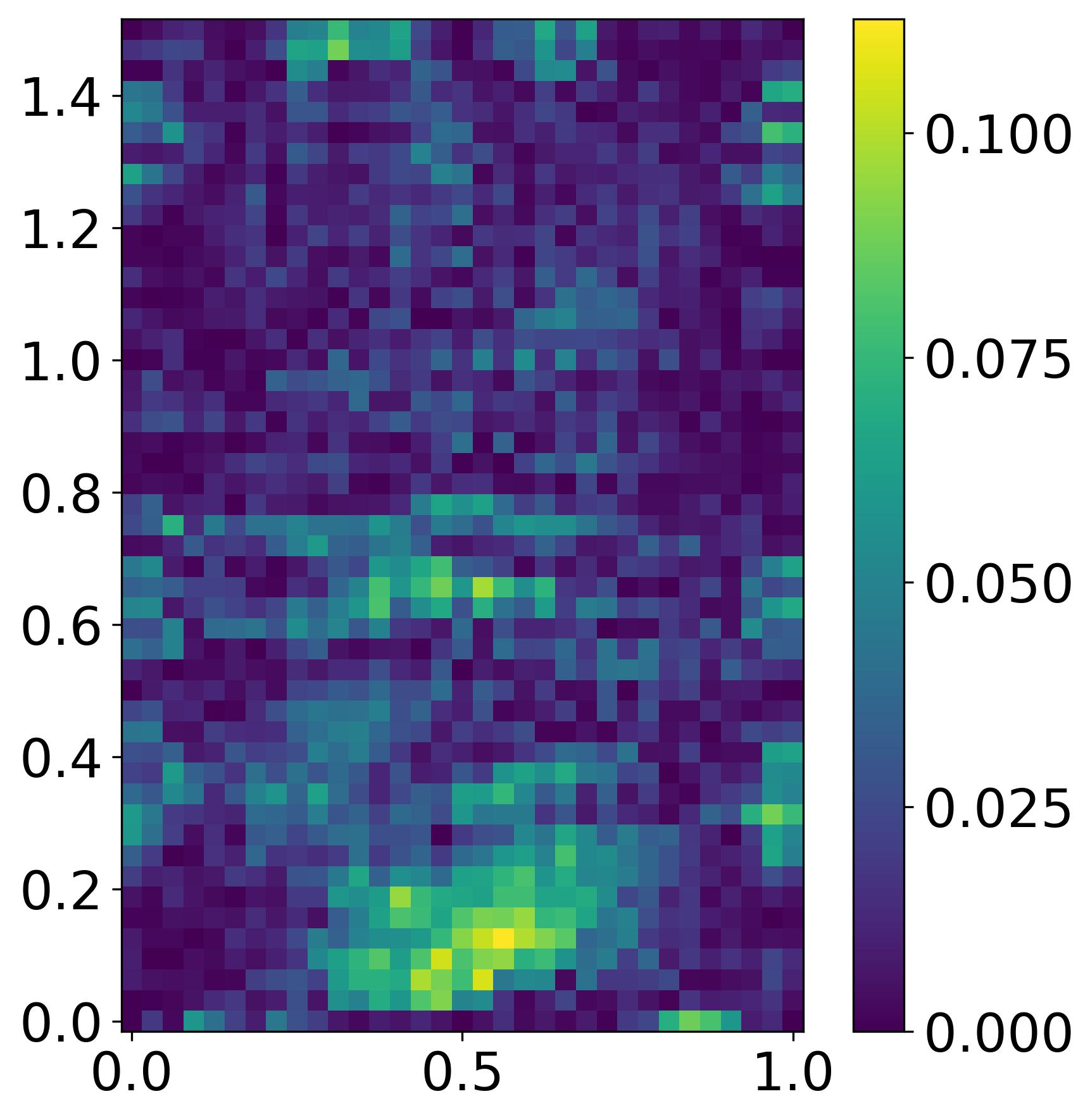}
\caption{The relative difference $ \frac{|u_{0} - u_{0}^{*}|}{|u_{0}^{*}|_{L^{\infty}(G)}} $.}
\label{fig.exp2c}
\end{subfigure}

\vspace{0.5cm} 

\begin{subfigure}[t]{0.47\textwidth}
\centering
\includegraphics[width=\linewidth]{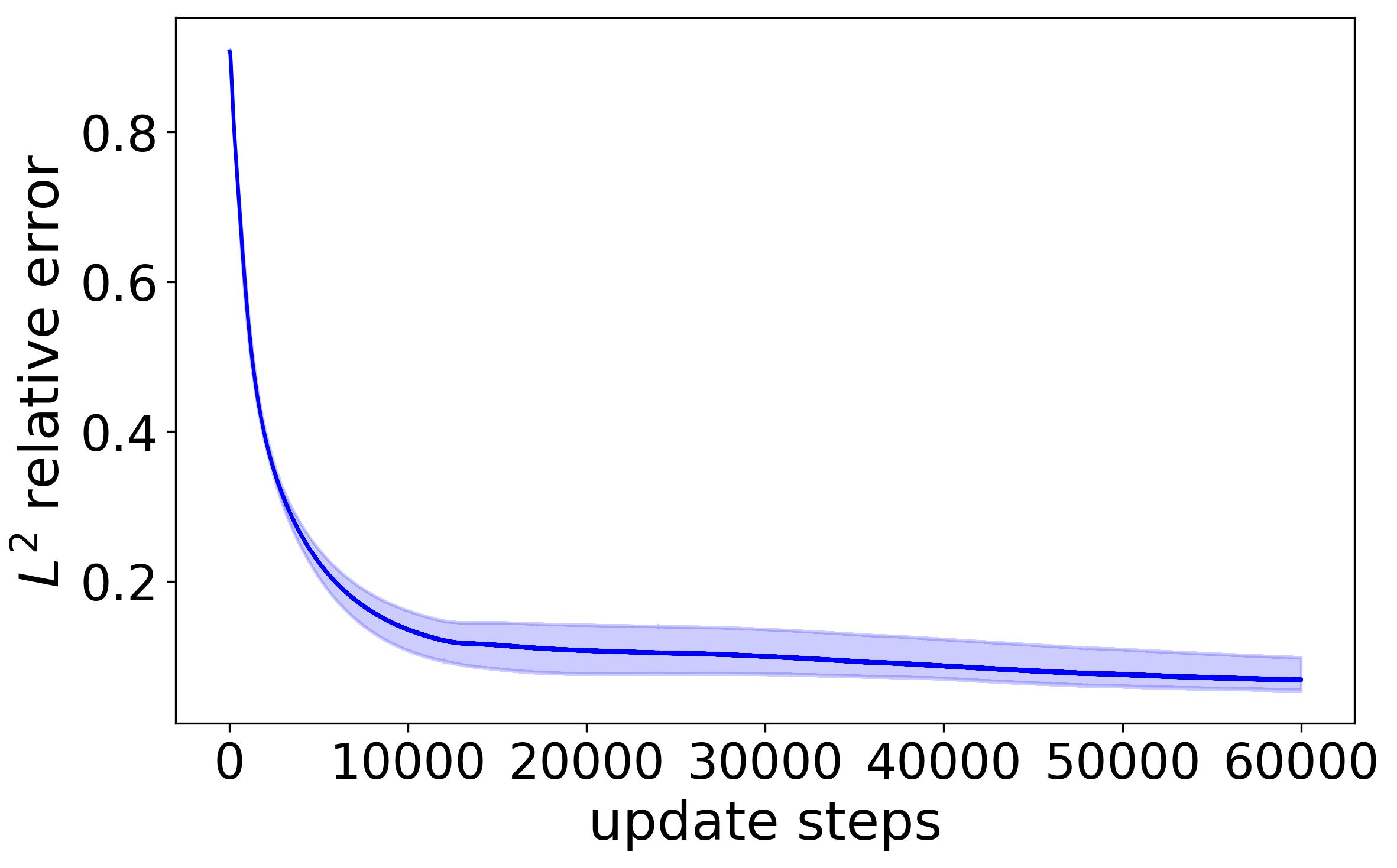}
\caption{The $ L^{2} $ relative error $ \frac{\mathbb{E}|u_{0} -u_{0}^{*}|_{L^{2}(G)}}{\mathbb{E} |u_{0}^{*}|_{L^{2}(G)}} $ over update steps}
\label{fig.exp2d}
\end{subfigure}%
\hfill
\begin{subfigure}[t]{0.47\textwidth}
\centering
\includegraphics[width=\linewidth]{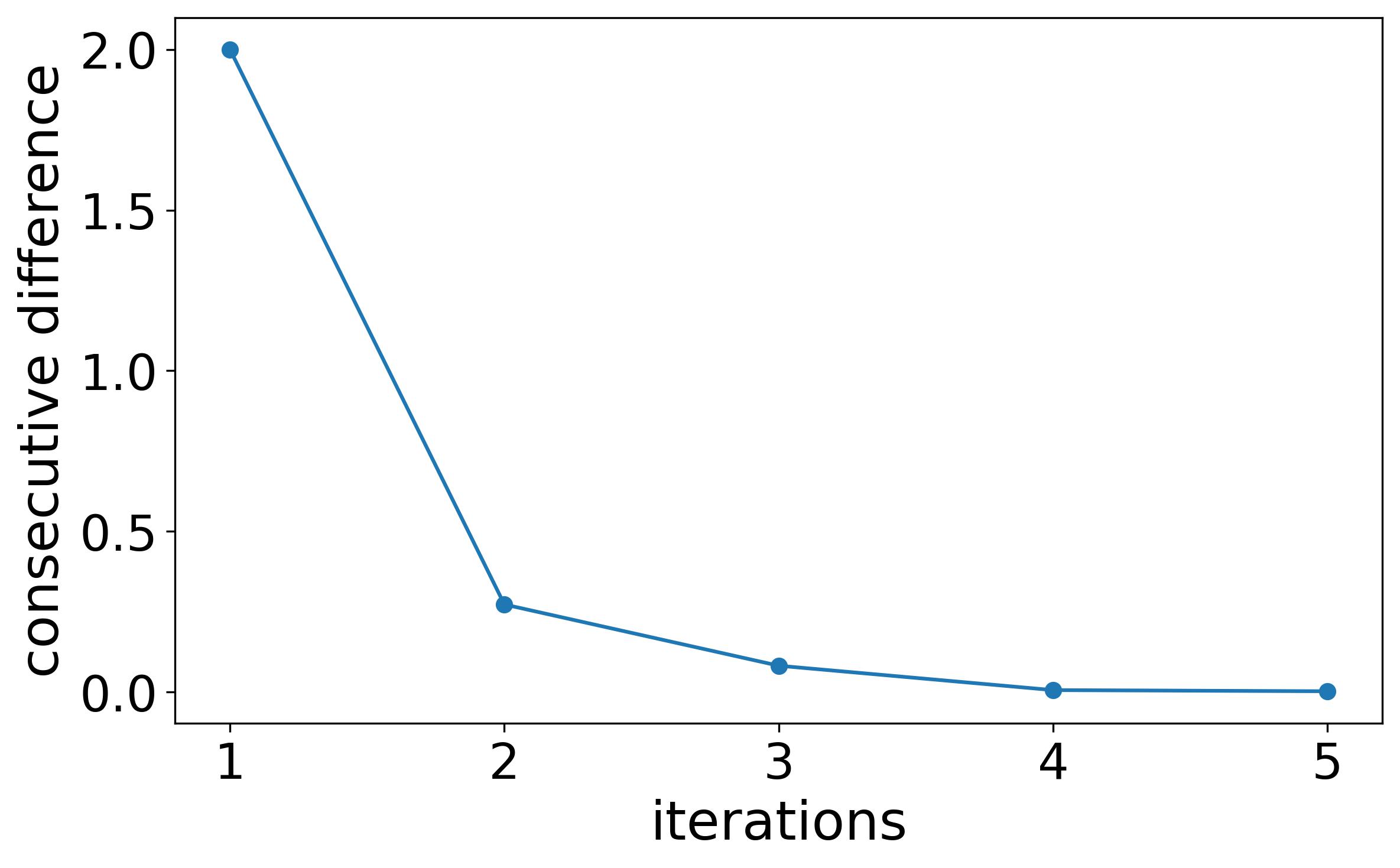}
\caption{The consecutive difference $ \mathbb{E} |u_{\ell}^{n+1}(0)-u_{\ell}^{n}(0)|_{L^{\infty}(G)} $}
\label{fig.exp2e}
\end{subfigure}

\caption{The numerical results for \cref{ex2}. The computed source function $u_{0}$ approximates the true source function $u_{0}^{*}$ well, despite the non-Lipschitz continuity of the nonlinearity $F$. The relative difference is small, and the $L^{2}$ relative error is 6\%. }
\label{fig.exp2}
\end{figure}

\begin{example}
\label{ex2}
Let $ F(u, u_{t}, \nabla u) = \min\{e^{u} + |\nabla u|, 10\} $, $ a = x^{2}+ y^{2} + t^{2} $
and the true source function 
\begin{align*}
u^{*}_{0} =  \sin(2 \pi(x+y)) + \sin(4\pi(x-y))
\end{align*}
The boundary conditions for the forward equation are given by $ f^{*} = u^{*}_{0} \big|_{\Gamma} $.
The lateral Cauchy data  $f$ and $g$  are obtained from \cref{eqNoisyData,eqBoundaryCon1,eqBoundaryCon2}.
The numerical result is presented in \cref{fig.exp2}.

It is worth noting that the nonlinearity $F$ is not Lipschitz continuous, which significantly increases the challenge of the inverse problem.
\cref{fig.exp2a,fig.exp2b} show that the source function $u_{0}^{*}$ is accurately recovered, with a final $L^{2}$ relative error of 6\%. This demonstrates that our method remains robust even when dealing with non-Lipschitz nonlinearities. Moreover, as illustrated in \cref{fig.exp2d}, the $L^{2}$ relative error stays largely consistent across different sample paths.

\end{example}

\begin{figure}[!htb]
\centering

\begin{subfigure}[t]{0.3\textwidth}
\centering
\includegraphics[width=\linewidth]{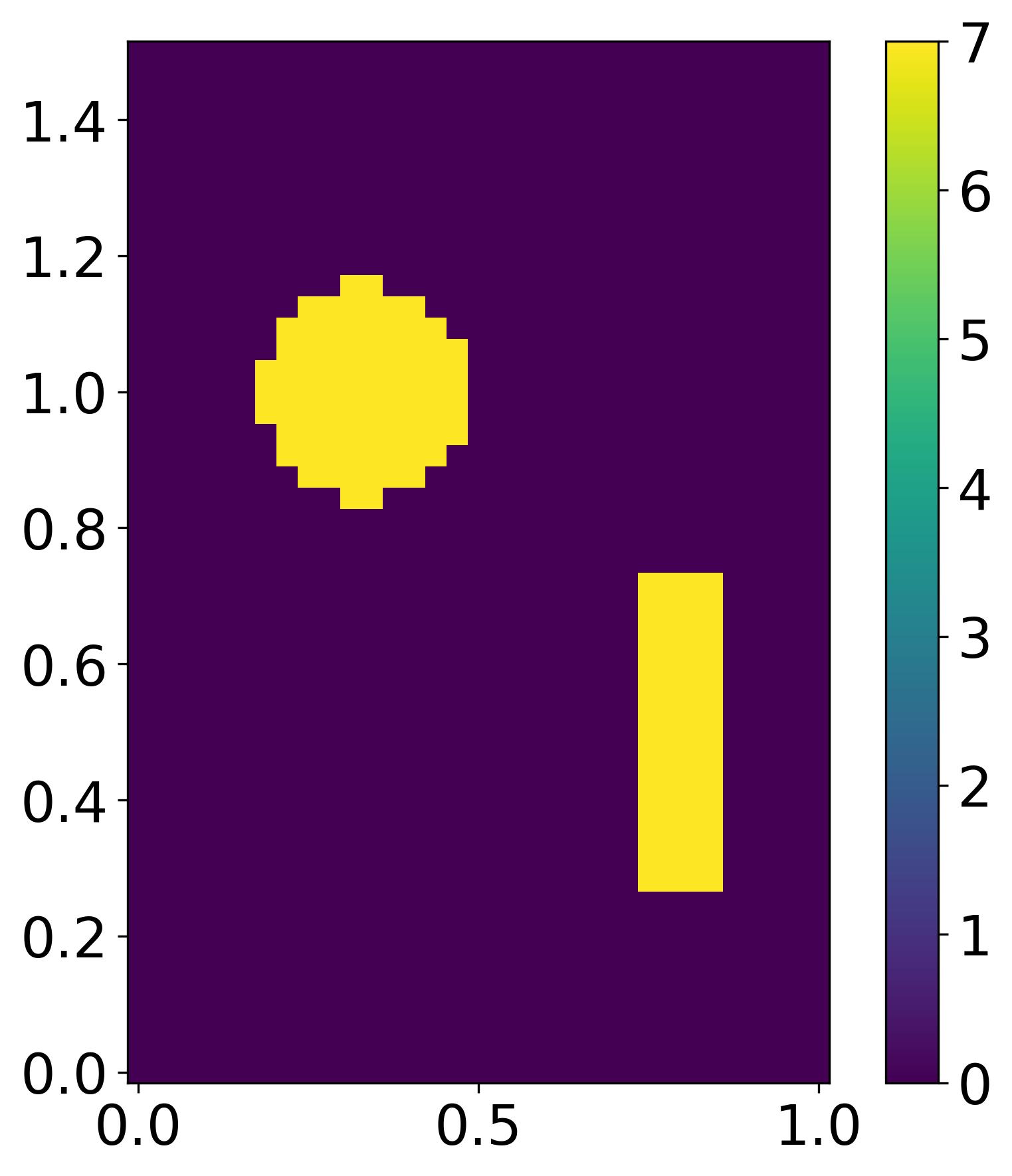}
\caption{The true source function $u_{0}^{*}$.}
\label{fig.exp3a}
\end{subfigure}%
\hfill
\begin{subfigure}[t]{0.3\textwidth}
\centering
\includegraphics[width=\linewidth]{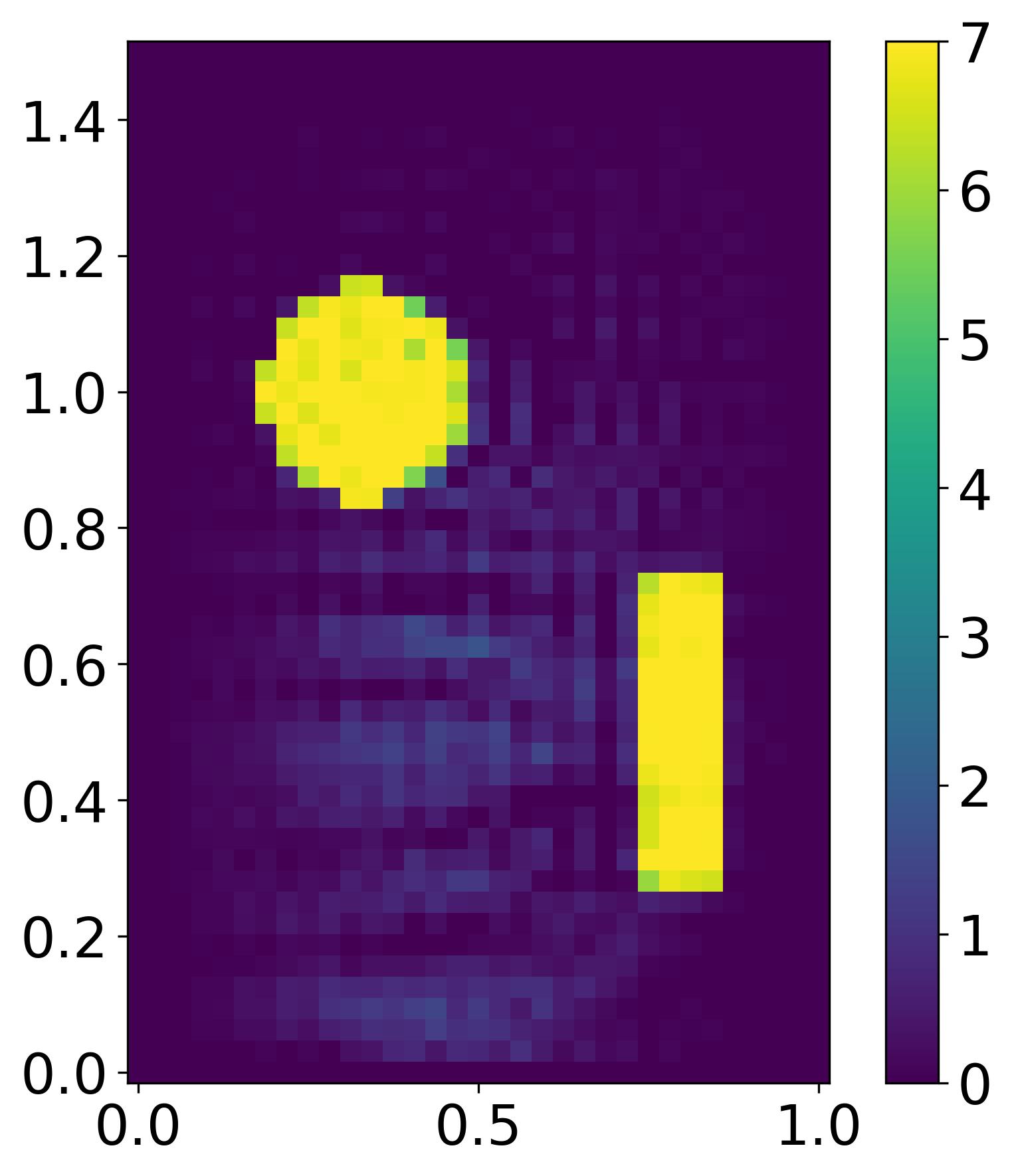}
\caption{The computed source function $u_{0}$.}
\label{fig.exp3b}
\end{subfigure}%
\hfill
\begin{subfigure}[t]{0.3\textwidth}
\centering
\includegraphics[width=1.07 \linewidth]{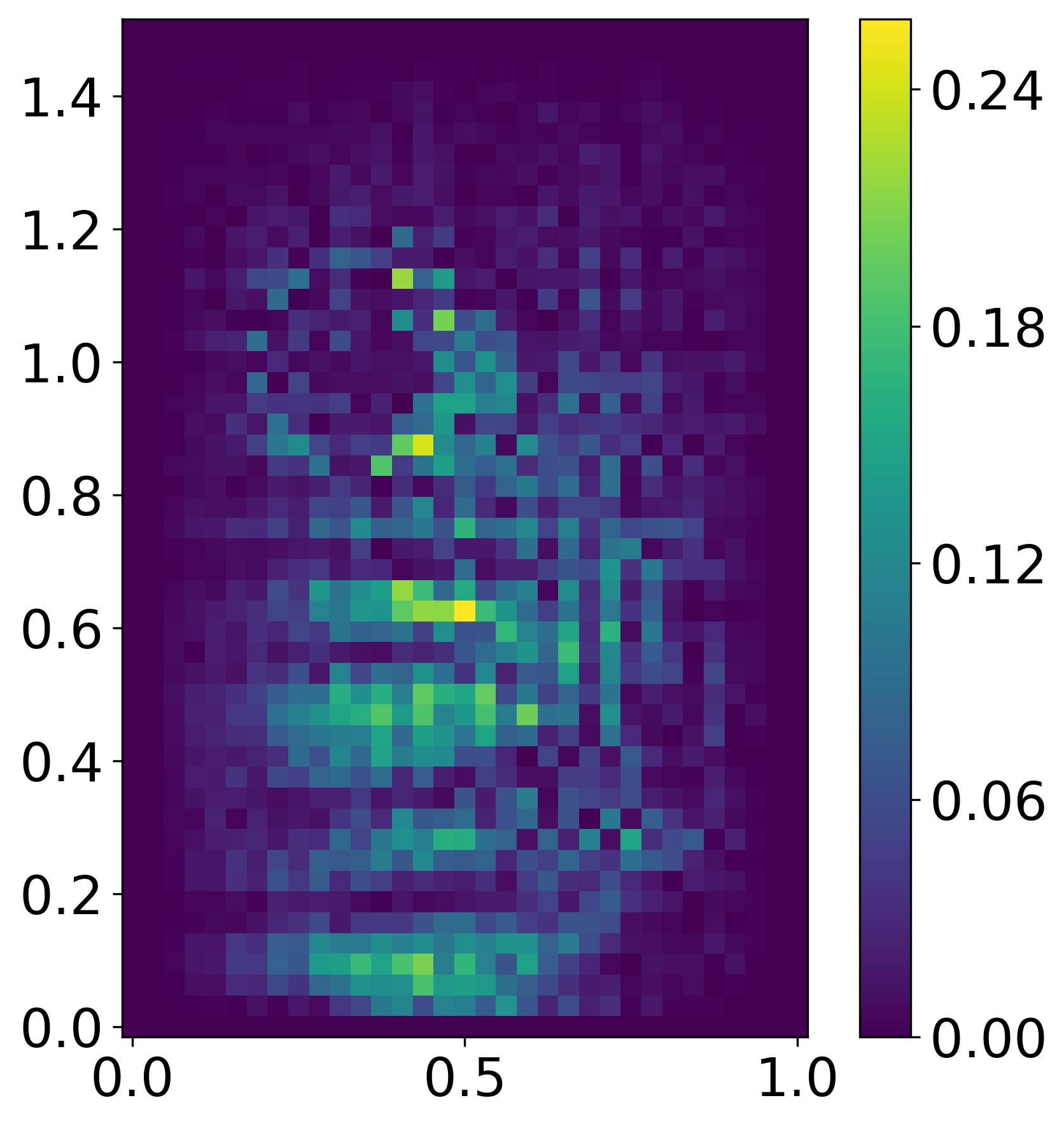}
\caption{The relative difference $ \frac{|u_{0} - u_{0}^{*}|}{|u_{0}^{*}|_{L^{\infty}(G)}} $.}
\label{fig.exp3c}
\end{subfigure}

\vspace{0.5cm} 

\begin{subfigure}[t]{0.47\textwidth}
\centering
\includegraphics[width=\linewidth]{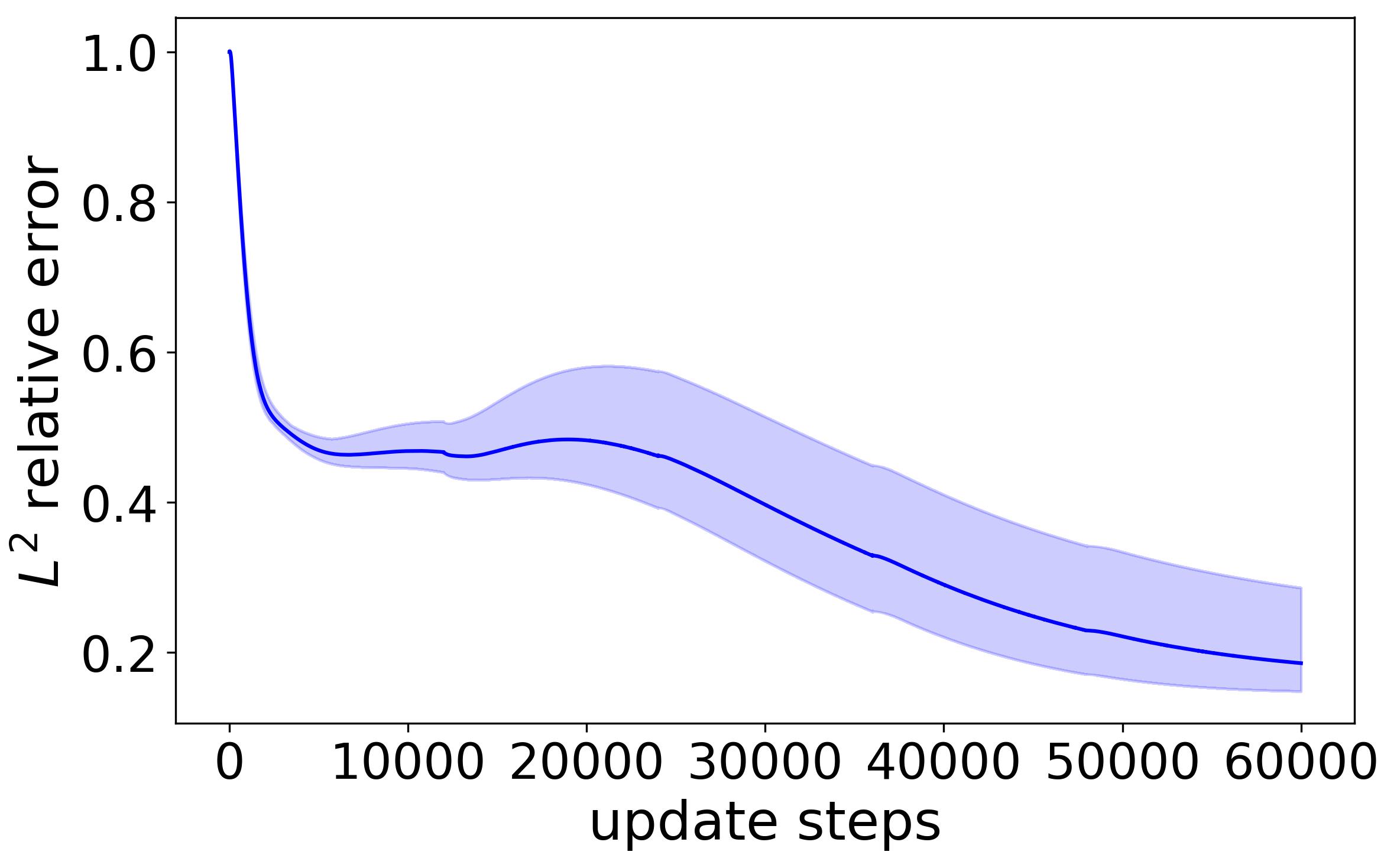}
\caption{The $ L^{2} $ relative error $ \frac{\mathbb{E}|u_{0} -u_{0}^{*}|_{L^{2}(G)}}{\mathbb{E} |u_{0}^{*}|_{L^{2}(G)}} $ over update steps}
\label{fig.exp3d}
\end{subfigure}%
\hfill
\begin{subfigure}[t]{0.47\textwidth}
\centering
\includegraphics[width=\linewidth]{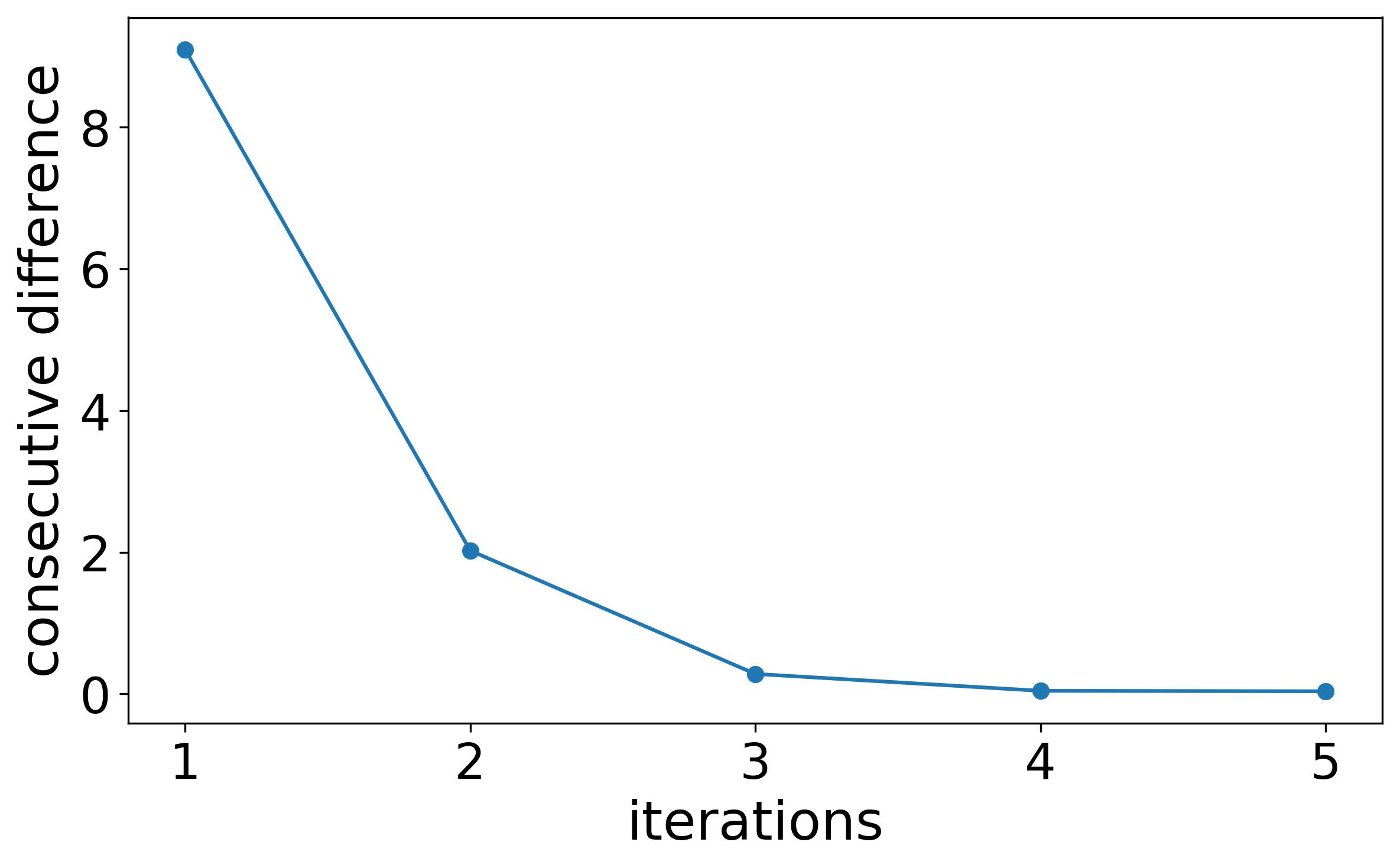}
\caption{The consecutive difference $ \mathbb{E} |u_{\ell}^{n+1}(0)-u_{\ell}^{n}(0)|_{L^{\infty}(G)} $}
\label{fig.exp3e}
\end{subfigure}

\caption{The numerical results for \cref{ex3}. The computed source function $u_{0}$ closely approximates the true source function $u_{0}^{*}$. Despite both $f$ and $g$ being stochastic processes, which further complicates the problem, the relative difference remains minimal and the $L^{2}$ relative error is 17.9\%.}  \label{fig.exp3}
\end{figure}

\begin{example}
\label{ex3}
Let $ F(u, u_{t}, \nabla u) = \sqrt{1 + u^{2}} + |\nabla u| $, $ a = 10 x y t^{2} $
and 
\begin{align*}
\widetilde{u}^{*}_{0} = \left\{
\begin{aligned}
    & 7, \quad & &\bigg( x- \frac{7}{3} \bigg)^{2} + (y - 4)^{2} < \frac{1}{40} \text{ or } 
    \max \bigg\{ 
        16 \bigg |x-\frac{14}{5}\bigg|, 4 \bigg| y - \frac{7}{2} \bigg|
    \bigg\} <1
    \\
    & 0, \quad && \text{ others}.
\end{aligned}
\right.
\end{align*}
Letting $ \widetilde{G} = (0,5) \times (0,7.5) $, $ \widetilde{N}_{x} = 60 $ and $ \widetilde{N}_{y} = 240 $, we solve \cref{eqDisFor} in $ \widetilde{G} $.
The boundary conditions for the forward equation are given by $ \widetilde{f}^{*} = \widetilde{u}^{*}_{0} \big|_{\partial \widetilde{G}} $.
The true source function is given by $\widetilde{u}_{0}^{*}$  restricted to  $[0,1]\times[2,3]\times[3,4.5]$ .
The lateral Cauchy data  $f$ and $g$  are also obtained from  the restricted true solution function $\widetilde{u}^{*}$.
In this case, both $f$ and $g$ are stochastic processes as well, which further complicates the problem.
The numerical result is presented in \cref{fig.exp3}.

\cref{fig.exp3a,fig.exp3b} show that the source function $u_{0}^{*}$ is accurately recovered, with a final $L^{2}$ relative error of 17.9\%. 
Since both $f$ and $g$ are stochastic processes in this case, additional randomness is introduced into the problem. 
As can be seen from \cref{fig.exp3d}, although the $L^{2}$ relative errors vary  for different sample paths, the averaged error remains  stable.

\end{example}

\begin{figure}[!htb]
\centering

\begin{subfigure}[t]{0.3\textwidth}
\centering
\includegraphics[width=\linewidth]{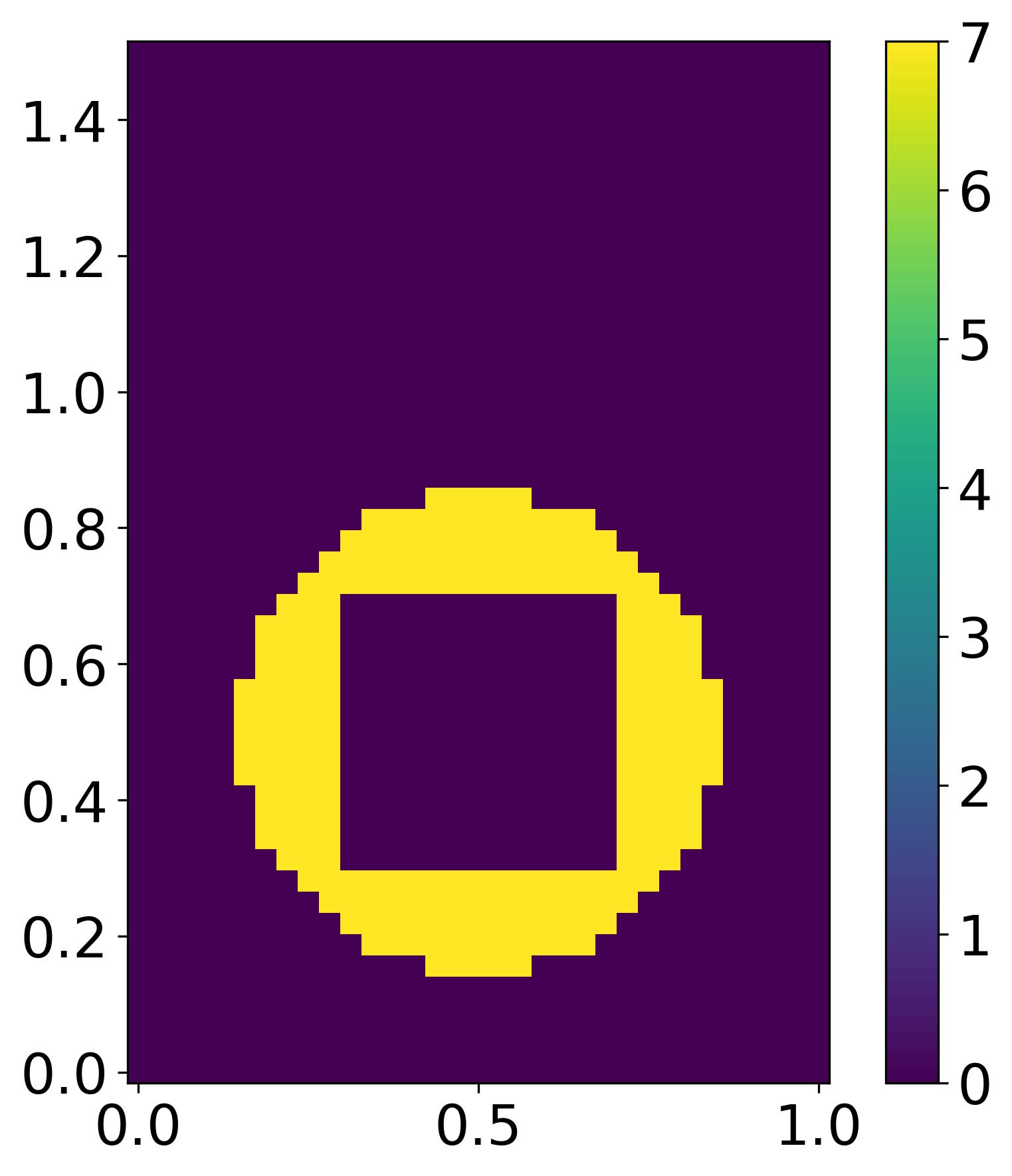}
\caption{The true source function $u_{0}^{*}$.}
\label{fig.exp4a}
\end{subfigure}%
\hfill
\begin{subfigure}[t]{0.3\textwidth}
\centering
\includegraphics[width=\linewidth]{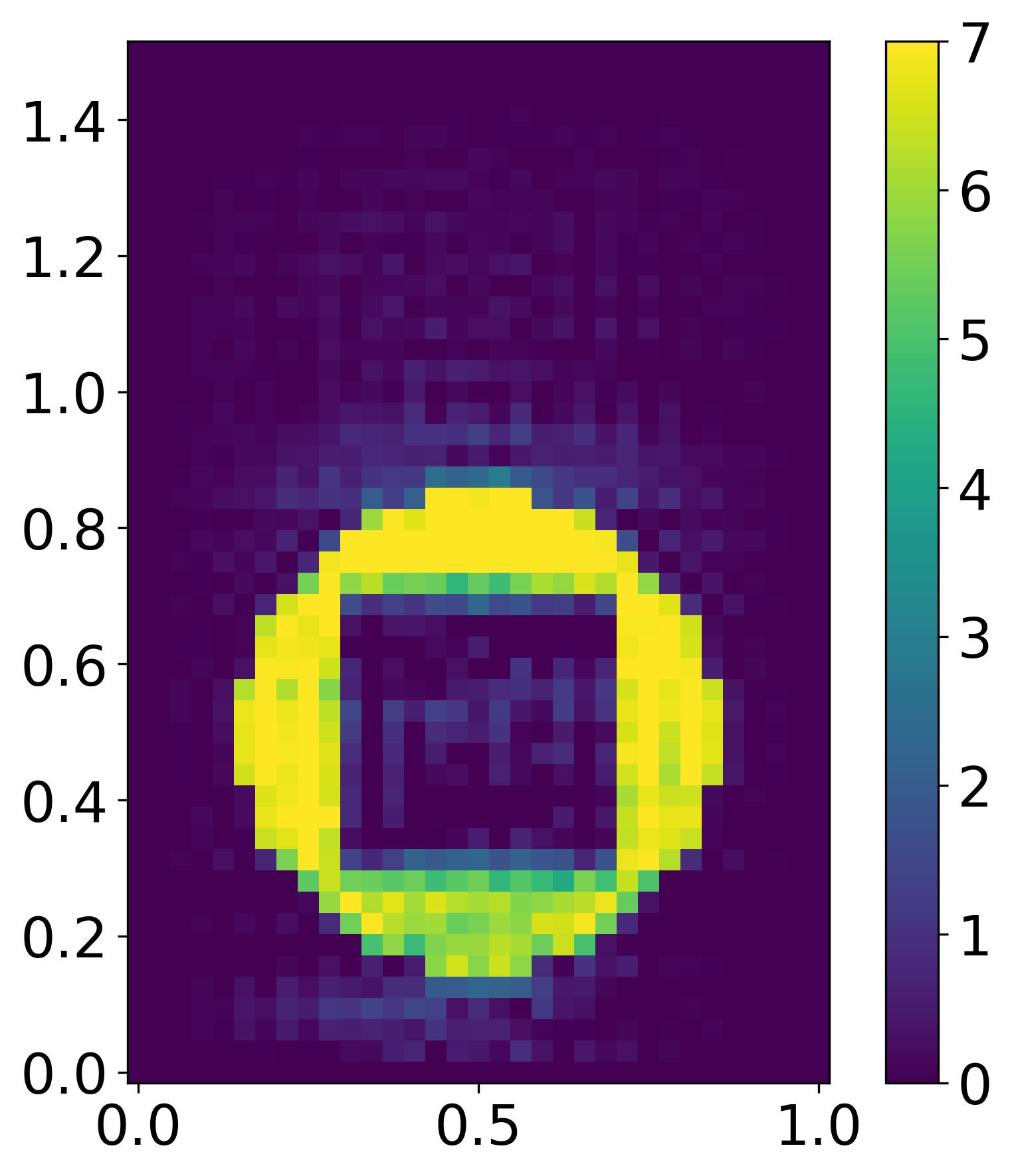}
\caption{The computed source function $u_{0}$.}
\label{fig.exp4b}
\end{subfigure}%
\hfill
\begin{subfigure}[t]{0.3\textwidth}
\centering
\includegraphics[width=1.045 \linewidth]{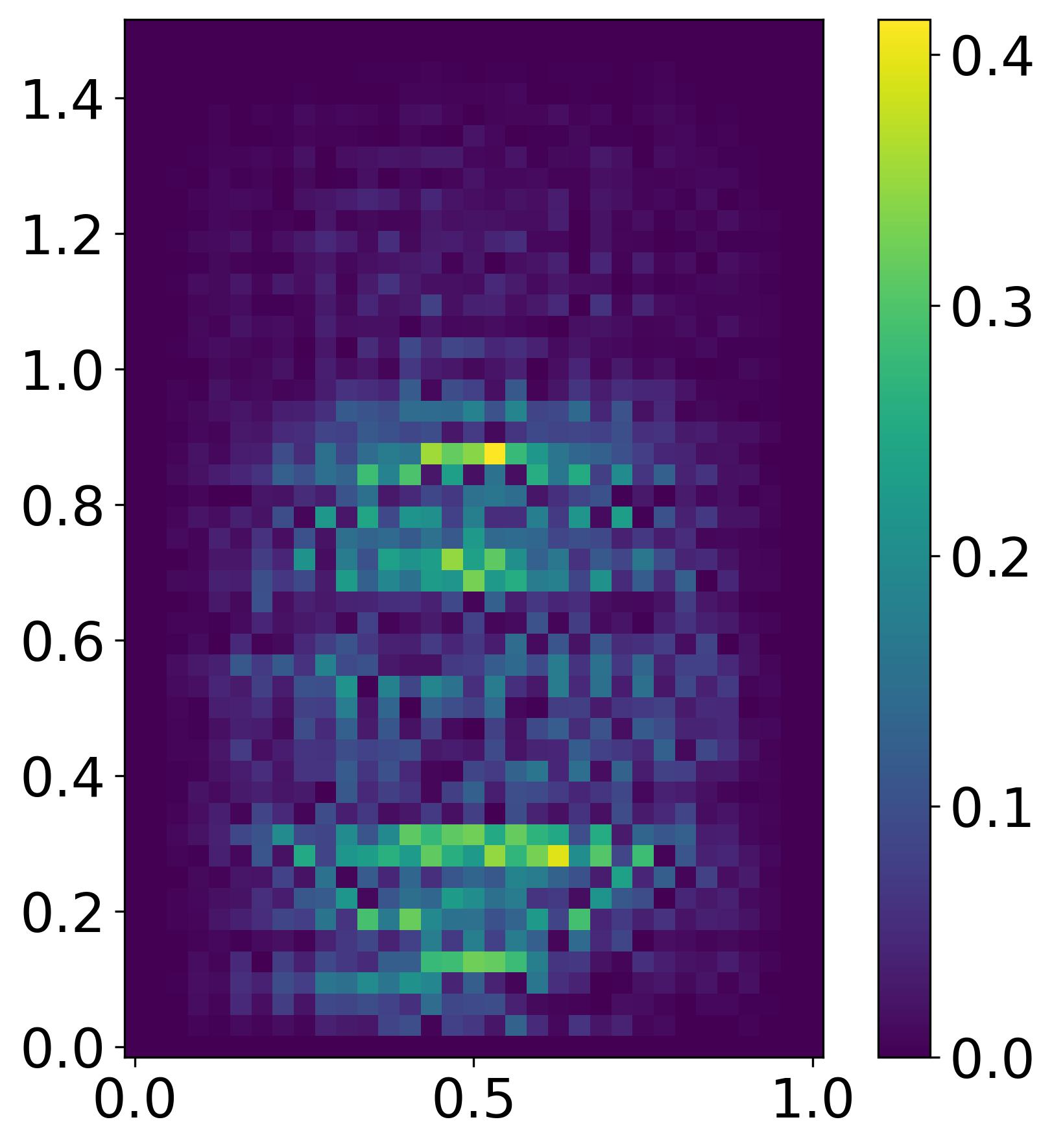}
\caption{The relative difference $ \frac{|u_{0} - u_{0}^{*}|}{|u_{0}^{*}|_{L^{\infty}(G)}} $.}
\label{fig.exp4c}
\end{subfigure}

\vspace{0.5cm} 

\begin{subfigure}[t]{0.47\textwidth}
\centering
\includegraphics[width=\linewidth]{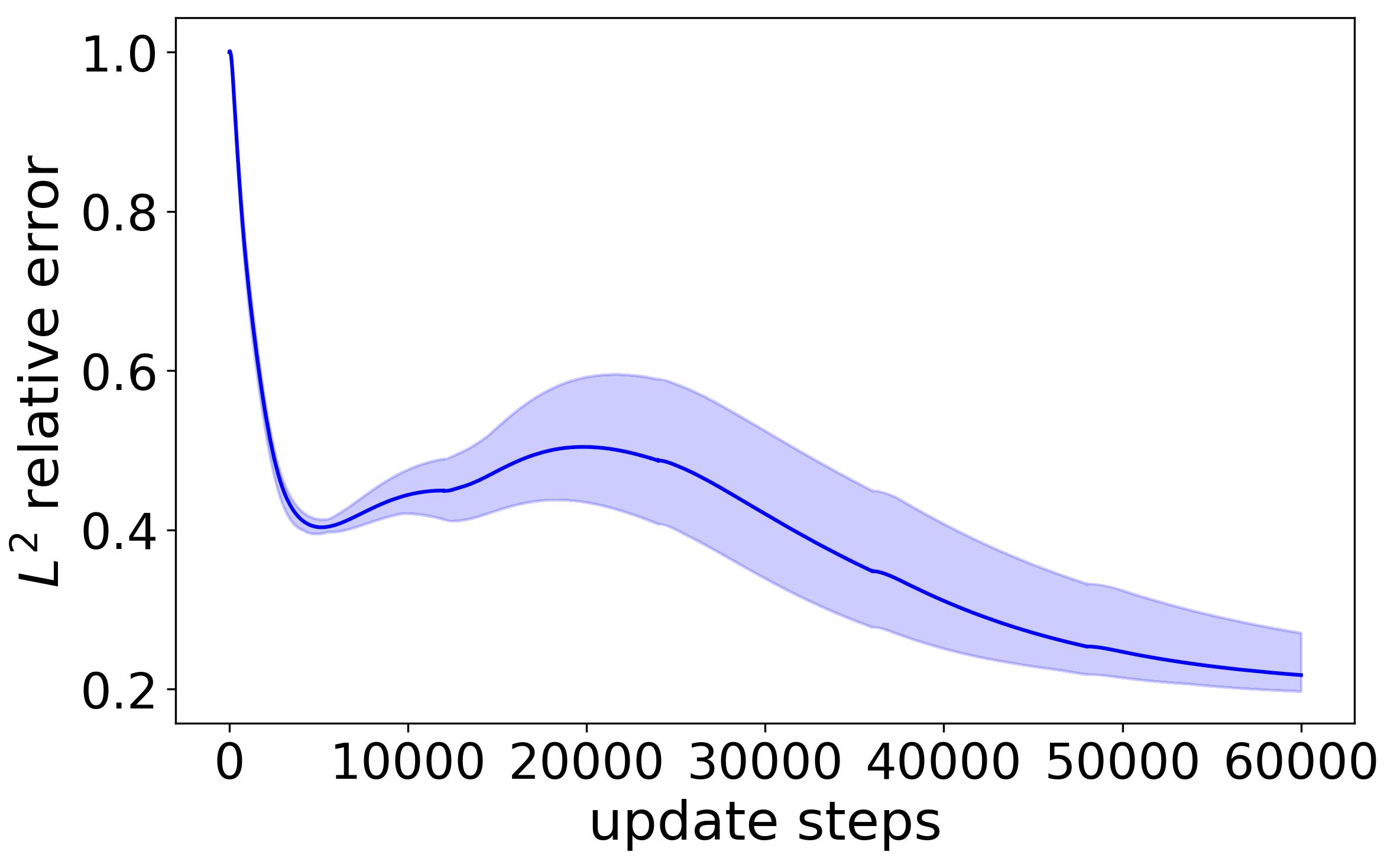}
\caption{The $ L^{2} $ relative error $ \frac{\mathbb{E}|u_{0} -u_{0}^{*}|_{L^{2}(G)}}{\mathbb{E} |u_{0}^{*}|_{L^{2}(G)}} $ over update steps}
\label{fig.exp4d}
\end{subfigure}%
\hfill
\begin{subfigure}[t]{0.47\textwidth}
\centering
\includegraphics[width=\linewidth]{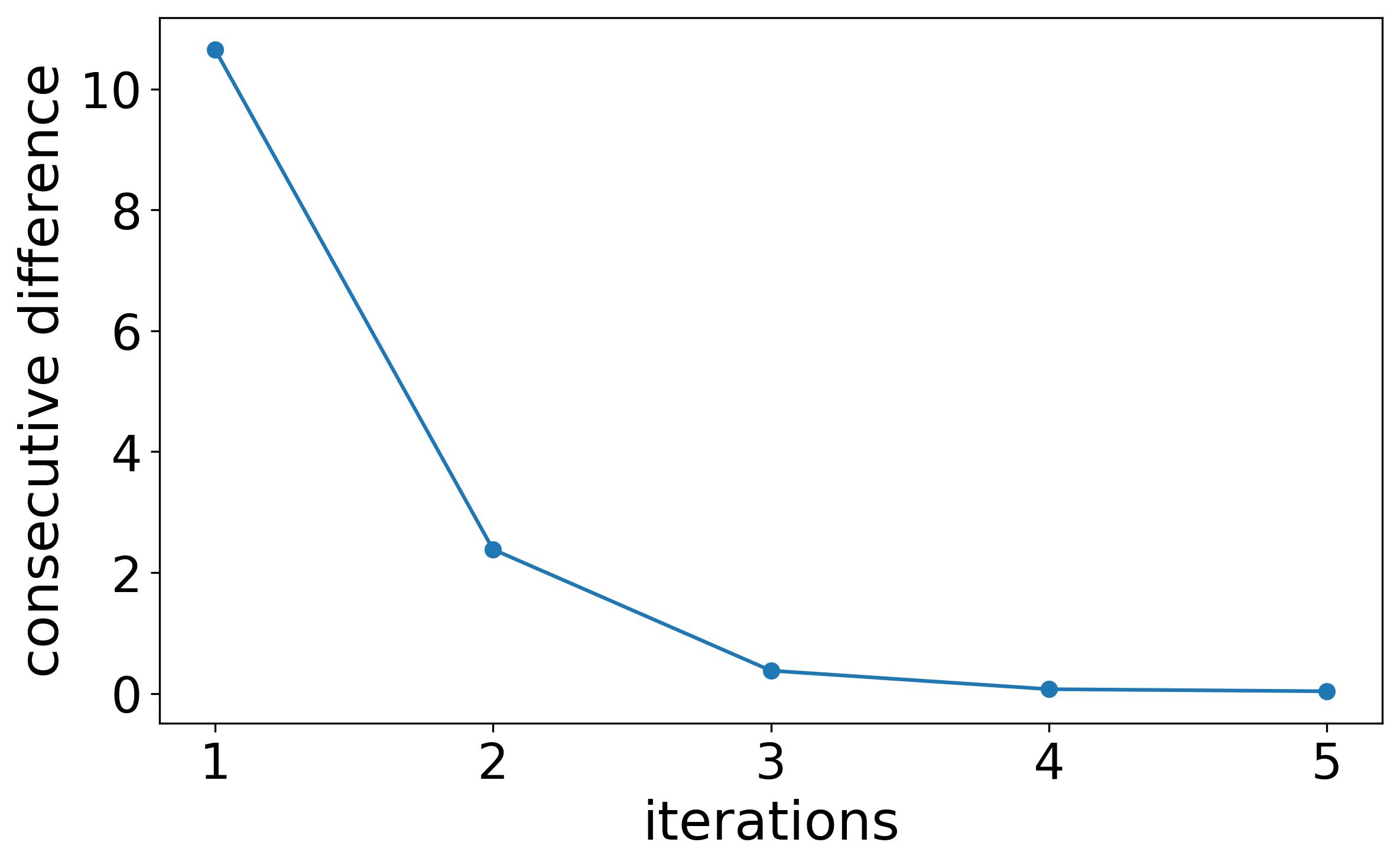}
\caption{The consecutive difference $ \mathbb{E} |u_{\ell}^{n+1}(0)-u_{\ell}^{n}(0)|_{L^{\infty}(G)} $}
\label{fig.exp4e}
\end{subfigure}

\caption{The numerical results for \cref{ex4}. The computed source function $u_{0}$ closely approximates the true source function $u_{0}^{*}$. In this case, both $f$ and $g$ are stochastic processes, which further complicates the problem.
The relative difference remains minimal and the $L^{2}$ relative error is 20.7\%.}  \label{fig.exp4}
\end{figure}

\begin{example}
\label{ex4}
Let $ F(u, u_{t}, \nabla u) = \sqrt{1 + u^{2}} + |\nabla u| $, $ a = 10 x y t^{2} $
and 
\begin{align*}
\widetilde{u}^{*}_{0} = \left\{
\begin{aligned}
    & 7, \quad & &\bigg( x- \frac{5}{2} \bigg)^{2} +  \bigg(y - \frac{7}{2} \bigg)^{2} < \frac{1}{8} \text{ or } 
    \max \bigg\{ 
          \bigg |x-\frac{5}{2}\bigg|,   \bigg| y - \frac{7}{2} \bigg|
    \bigg\} > \frac{1}{5}
    \\
    & 0, \quad && \text{ others}.
\end{aligned}
\right.
\end{align*}
Letting $ \widetilde{G} = (0,5) \times (0,7.5) $, $ \widetilde{N}_{x} = 60 $ and $ \widetilde{N}_{y} = 240 $, we solve \cref{eqDisFor} in $ \widetilde{G} $.
The boundary conditions for the forward equation are given by $ \widetilde{f}^{*} = \widetilde{u}^{*}_{0} \big|_{\partial \widetilde{G}} $.
The true source function is given by $\widetilde{u}_{0}^{*}$  restricted to $[0,1]\times[2,3]\times[3,4.5]$.
The lateral Cauchy data  $f$ and $g$  are also obtained from  the restricted true solution function $\widetilde{u}^{*}$.
In this case, both $f$ and $g$ are stochastic processes as well, which further complicate the problem.
The numerical result is presented in \cref{fig.exp4}.

\cref{fig.exp4a,fig.exp4b} show that the source function $u_{0}^{*}$ is accurately recovered, with a final $L^{2}$ relative error of 20.7\%.
Although the stochastic nature of $f$ and $g$ introduces additional randomness, and the absence of normal derivative information on the lower boundary (with final time $T=1$) makes the reconstruction in the lower portion less accurate than in the upper part, the overall recovery remains robust. 
Moreover, as seen in \cref{fig.exp4c}, the regions with relatively larger error are occurred to the boundary of the source function, while errors elsewhere are very small. 
Finally, \cref{fig.exp4d} indicates that although the $L^{2}$ relative errors vary across different sample paths, the averaged error remains stable.

\end{example}

\section{Concluding remarks}

In this paper, we have proposed a new approach for solving inverse source problems for semilinear stochastic hyperbolic equations. 
Our method combines fixed-point iteration and Carleman estimates, enabling global convergence without the need for a good initial guess. 
To address the challenges arose by the stochastic nature of the equations, such as the non-differentiability of Brownian motion sample paths and the lack of compactness in the solution space, we redesigned the Tikhonov functional and developed a new Carleman estimate suitable for stochastic hyperbolic equations.

Theoretical analysis established the convergence and stability of the proposed method, with  \cref{thmConverge} confirming H\"{o}lder-type stability with respect to data noise.
Numerical experiments demonstrated that our method is robust, efficient, and capable of reconstructing the source function using only partial boundary observations, even in the presence of strong nonlinearity and randomness.
In contrast, as shown in \cref{fig:compareSGD_Adam}, the classical conjugate gradient method may not handle randomness well, whereas our approach remains computationally efficient.

Future work may include extending the approach to more general classes of stochastic partial differential equations and improving computational efficiency.

\appendix

\end{document}